\theoremstyle{plain}
\newtheorem{theorem}{Theorem}[section]
\newtheorem{proposition}{Proposition}[section]
\newtheorem{lemma}{Lemma}[section]
\theoremstyle{definition}
\newtheorem{definition}{Definition}[section]
\newtheorem{claim}{Claim}[section]
\newtheorem{problem}{Problem}[section]
\newtheorem{case}{Case}
\newtheorem{example}{Example}[section]
\theoremstyle{remark}
\newtheorem{remark}{Remark}[section]
\title{Measuring the convexity of compact sumsets with the Schneider non-convexity index}
\author{Mark Meyer
\thanks{Email: memeyer@shockers.wichita.edu, Wichita, KS, USA}}
\affil{Wichita State University department of mathematics}
\begin{document}
\maketitle

\begin{abstract}
    In recent work, Franck Barthe and Mokshay Madiman introduced the concept of the Lyusternik region, denoted by $\Lambda_{n}(m)$, to better understand volumes of sumsets. They gave a characterization of $\Lambda_{n}(2)$ (the volumes of compact sets in $\mathbb{R}^n$ when at most $m=2$ sets are added together) and proved that Lebesgue measure satisfies a fractional superadditive property. We attempt to imitate the idea of the Lyusternik region by defining a region based on the Schneider non-convexity index function, which was originally defined by Rolf Schneider in 1975. We call this region the Schneider region, denoted by $S_{n}(m)$. In this paper, we will give an initial characterization of the region $S_{1}(2)$ and in doing so, we will prove that the Schneider non-convexity index of a sumset $c(A_1+A_2)$ has a best lower bound in terms of $c(A_1)$ and $c(A_2)$. We will pose some open questions about extending this lower bound to higher dimensions and large sums. We will also show that, analogous to Lebesgue measure, the Schneider non-convexity index has a fractional subadditive property. Regarding the Lyusternik region, we will show that when the number of sets being added is $m\geq3$, that the region $\Lambda_{n}(m)$ is not closed, proving a new qualitative property for the region.
\end{abstract}

\textbf{Keywords:} sumset, fractional partition, Lyusternik region, convexity index\\
\textbf{Classification code:} 52A40\\
\textbf{Data availability statement:} Data sharing not applicable to this article as as no data sets were generated or analysed during the current study.

{\hypersetup{linkcolor=black}
\tableofcontents}
\section{Introduction}
\subsection{The Schneider non-convexity index}
In 1975 Rolf Schneider introduced a measure of non-convexity \cite{schneider1}. For a set $A\subseteq\mathbb{R}^n$ the Schneider non-convexity index of $A$ is defined by
\begin{equation*}
    c(A):=\inf\{\lambda\geq0:A+\lambda\text{conv}(A)\text{ is convex}\}.
\end{equation*}
In the case that $A$ is the empty set, $c(\varnothing)=0$. In the original paper the following theorem was proved.
\begin{theorem}[Schneider \cite{schneider1}]\label{Shneider_Thm}
    Let $A\subseteq\mathbb{R}^n$ be a compact set. Then $0\leq c(A) \leq n$. The left inequality is equality if and only if $A$ is convex, and the right inequality is equality if and only if $A$ consists of exactly $n+1$ affinely independent points.
\end{theorem}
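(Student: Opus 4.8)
The plan is to reduce everything to a single reformulation of the defining condition. Write $C:=\conv(A)$ and observe that $\conv(A+\lambda C)=C+\lambda C=(1+\lambda)C$, so, since $A+\lambda C\subseteq(1+\lambda)C$ always holds, the set $A+\lambda C$ is convex if and only if
\begin{equation*}
A+\lambda C=(1+\lambda)C,\qquad\text{equivalently}\qquad (1+\lambda)C\subseteq A+\lambda C.
\end{equation*}
From $A+\lambda'C=A+\lambda C+(\lambda'-\lambda)C$ one sees that the set of admissible $\lambda$ is an up-set, and a short compactness argument (along a minimizing sequence $\lambda_k\downarrow c(A)$, represent suitable points as $a_k+\lambda_k c_k$ and pass to convergent subsequences in the compact sets $A$ and $C$) shows the infimum is attained. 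In particular $c(A)\ge0$ is immediate, and if $c(A)=0$ then $A=A+0\cdot C=C$, so $A$ is convex; conversely if $A$ is convex then $A=C$ and $\lambda=0$ is admissible, giving the left equality.

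For the upper bound I would run a peeling argument based on Carathéodory's theorem. Fix $x\in C$ and write $x=\sum_{i=0}^{n}t_i a_i$ with $a_i\in A$, $t_i\ge0$, $\sum t_i=1$. The largest weight satisfies $t_j\ge\frac{1}{n+1}$; subtracting $\frac{1}{n+1}a_j$ and rescaling by $\frac{n+1}{n}$ produces a point whose barycentric weights are nonnegative (the $j$-th being $\frac{(n+1)t_j-1}{n}\ge0$) and sum to $1$, hence lies in $C$. This exhibits $x\in\frac{1}{n+1}A+\frac{n}{n+1}C$, i.e. $A+nC=(1+n)C$, so $n$ is admissible and $c(A)\le n$. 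To obtain the right equality for a set $A=\{v_0,\dots,v_n\}$ of $n+1$ affinely independent points, I would test the covering condition at the barycenter. Normalizing so that $\sum v_i=0$, the point $0$, which is the barycenter of $(1+\lambda)C$, lies in $v_i+\lambda C$ iff $-v_i/\lambda\in C$; expressing $-v_i/\lambda$ in barycentric coordinates (using $\sum v_i=0$ to fix the normalization) gives the $i$-th coordinate $\frac{\lambda-n}{\lambda(n+1)}$, which is nonnegative exactly when $\lambda\ge n$. Hence the barycenter of $(1+\lambda)C$ is uncovered for every $\lambda<n$, so $c(A)\ge n$ and therefore $c(A)=n$.

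The main obstacle is the converse: $c(A)=n$ forces $A$ to be exactly $n+1$ affinely independent points. First I would rule out degeneracy—if $\dim\operatorname{aff}(A)=d<n$, then applying the upper bound inside the subspace $\operatorname{aff}(A)$ (translating so it is linear, and noting convexity is intrinsic to it) gives $c(A)\le d<n$, so equality forces $A$ to be full dimensional. It then remains to show that $\conv(A)$ must be a simplex and that $A$ contains only its vertices. Monotonicity under enlargement ($A\subseteq A'\subseteq C$ with the same hull implies $c(A')\le c(A)$, since the covering only gets easier) reduces the problem to $A=\operatorname{ext}(C)$, and the extreme points of $C$ lie in $A$ by Milman's theorem. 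The crux, and the step I expect to be hardest, is upgrading the Carathéodory estimate to a \emph{strict} and \emph{uniform} one: if $C$ is not a simplex, or if $A$ strictly contains the vertices of a simplex, then every point of $C$ admits a representation whose dominant weight exceeds $\frac{1}{n+1}$ by a fixed amount, equivalently the ray reformulation
\begin{equation*}
c(A)=\Big(\inf_{y\in C}\ \max_{a\in A}\ \sup\{s\ge0:\,y+s(y-a)\in C\}\Big)^{-1}
\end{equation*}
has its inner infimum strictly above $1/n$. Establishing this strict gap—presumably via an analysis of when equality holds in Carathéodory's theorem, together with a compactness argument locating a worst point $y$—is what pins $A$ down to the simplex vertices.
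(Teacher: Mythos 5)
The paper never proves this theorem — it is imported with a citation from Schneider's 1975 paper — so there is no internal proof to compare against; I will judge the proposal on its own merits. Most of what you wrote is correct and complete: the reformulation ($A+\lambda C$ is convex iff $(1+\lambda)C\subseteq A+\lambda C$, since $\conv(A+\lambda C)=(1+\lambda)C$), the up-set property and the compactness argument showing the infimum is attained, the left-equality case, the Carath\'eodory peeling proof that $C\subseteq\frac{1}{n+1}A+\frac{n}{n+1}C$ (hence $c(A)\le n$), and the barycenter computation giving the $i$-th barycentric coordinate $\frac{\lambda-n}{\lambda(n+1)}$, which correctly shows that $n+1$ affinely independent points have index exactly $n$. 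The dimension-reduction remark ($c(A)\le\dim\operatorname{aff}(A)$) and the monotonicity $c(A')\le c(A)$ for $A\subseteq A'\subseteq\conv(A)$ are also sound.

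However, there is a genuine gap: the converse of the right-hand equality — that $c(A)=n$ forces $A$ to be exactly $n+1$ affinely independent points — is not proven. You state the correct quantitative target (if $C$ is not a simplex, or if $A$ properly contains the vertex set of a simplex, then the dominant Carath\'eodory weight is uniformly bounded above $\frac{1}{n+1}$, equivalently $\inf_{y\in C}\max_{a\in A}\sup\{s\ge0: y+s(y-a)\in C\}>1/n$), but you explicitly defer its proof, and this is precisely the hardest part of Schneider's theorem; everything you did prove is the routine part. Two specific warnings about the deferred step. First, uniformity does not come for free from compactness: the function $y\mapsto\max_{a\in A}\sup\{s\ge0:y+s(y-a)\in C\}$ is badly discontinuous (it is $+\infty$ at points of $A$ and can collapse to $0$ at nearby boundary points — consider $A$ the unit circle, $C$ the disk), so "locating a worst point $y$" cannot be done by naive semicontinuity; one needs an actual structural argument about non-simplicial hulls. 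Second, your monotonicity reduction only gives $c(A)\le c(\overline{\operatorname{ext}(C)})$, which handles the claim that $C$ must be a simplex but not the claim that $A$ contains no points besides the vertices; for the latter you need strictness ($c(A)<n$ whenever $A$ properly contains the vertex set), which monotonicity alone cannot deliver. As written, the proposal establishes $0\le c(A)\le n$, the left equality, and one direction of the right equality, but the characterization of equality on the right remains a plan rather than a proof.
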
 
Theorem \ref{Shneider_Thm} hints that the Schneider non-convexity index makes a good measure of non-convexity in the sense that the only sets that can have an index of zero are convex sets.

A recent review \cite{fradelizi1} contains results about functions which can be thought of as measures of non-convexity. In particular, the authors review the Schneider non-convexity index, and prove the following upper bound for the sum of three sets.
\begin{theorem}[Fradelizi et al. \cite{fradelizi1}]\label{strong bound}
    Let $A,B,C$ be subsets of $\mathbb{R}^{n}$. Then
    \begin{equation}\label{strong_bound}
        c(A+B+C)\leq\max\{c(A+B),c(B+C)\}.
    \end{equation}
\end{theorem}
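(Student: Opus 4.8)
The plan is to work from the reformulation that, for any set $S$ and any $\lambda\ge c(S)$, the set $S+\lambda\conv(S)$ is convex and therefore coincides with its own convex hull, which gives the identity $S+\lambda\conv(S)=(1+\lambda)\conv(S)$ (since $\conv(S+\lambda\conv(S))=\conv(S)+\lambda\conv(S)=(1+\lambda)\conv(S)$). Two elementary facts drive everything: that $\conv(X+Y)=\conv(X)+\conv(Y)$ for arbitrary sets, and that the family of admissible $\lambda$ is an upward-closed ray, since if $S+\lambda_0\conv(S)$ is convex then for $\lambda\ge\lambda_0$ we have $S+\lambda\conv(S)=\big(S+\lambda_0\conv(S)\big)+(\lambda-\lambda_0)\conv(S)$, a sum of convex sets. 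Fixing $\lambda:=\max\{c(A+B),c(B+C)\}$, both $(A+B)+\lambda\conv(A+B)=(1+\lambda)\conv(A+B)$ and $(B+C)+\lambda\conv(B+C)=(1+\lambda)\conv(B+C)$ hold (taking $\lambda$ strictly larger and letting it decrease to the maximum handles the boundary case when the infima are not attained). The goal is then to show that $L:=A+B+C+\lambda\conv(A+B+C)$ equals $(1+\lambda)\conv(A+B+C)$; the inclusion $L\subseteq(1+\lambda)\conv(A+B+C)$ is automatic, so proving the reverse inclusion yields convexity of $L$, hence $c(A+B+C)\le\lambda$.

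First I would peel off the $A+B$ block. Using $\conv(A+B+C)=\conv(A)+\conv(B)+\conv(C)$ and regrouping,
\[
L=\big[(A+B)+\lambda\conv(A+B)\big]+\big[C+\lambda\conv(C)\big]=(1+\lambda)\conv(A+B)+C+\lambda\conv(C),
\]
where the identity for $A+B$ was applied. Writing $(1+\lambda)\conv(A+B)=(1+\lambda)\conv(A)+(1+\lambda)\conv(B)$, it then remains to identify the block $G:=(1+\lambda)\conv(B)+C+\lambda\conv(C)$.

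The crux is the set identity $G=(1+\lambda)\big(\conv(B)+\conv(C)\big)$, which I would establish by a two-sided inclusion using the $B+C$ hypothesis. For the forward inclusion, replacing $C$ by $\conv(C)$ gives $G\subseteq(1+\lambda)\conv(B)+(1+\lambda)\conv(C)$. For the reverse inclusion, I would use $(1+\lambda)\big(\conv(B)+\conv(C)\big)=(B+\lambda\conv(B))+(C+\lambda\conv(C))$, which is exactly the $B+C$ identity rewritten, and then note $B\subseteq\conv(B)$, so this set is contained in $(\conv(B)+\lambda\conv(B))+(C+\lambda\conv(C))=(1+\lambda)\conv(B)+C+\lambda\conv(C)=G$. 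Hence $G=(1+\lambda)\big(\conv(B)+\conv(C)\big)$, and substituting back gives $L=(1+\lambda)\conv(A)+(1+\lambda)\conv(B)+(1+\lambda)\conv(C)=(1+\lambda)\conv(A+B+C)$, completing the argument.

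I expect the main obstacle to be recognizing that a naive split of $A+B+C$ into $(A+B)+C$ is not enough: forcing $L$ to be convex by that route would require controlling $c(C)$ in isolation, which the hypotheses simply do not provide. The genuinely load-bearing idea is the sandwich computation of $G$, where the convex factor $(1+\lambda)\conv(B)$ absorbs the gap between $B$ and $\conv(B)$ inside the $B+C$ identity; this is precisely what lets the second hypothesis convexify the $C$ part "through" the middle set $B$, and it explains why $B$ occurs in both terms of the bound. A secondary technical point is the justification of the endpoint $\lambda=\max\{c(A+B),c(B+C)\}$ when the relevant infima are not attained, which I would dispatch by the monotonicity remark together with a limiting argument on $\lambda$.
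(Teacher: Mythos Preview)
The paper does not supply its own proof of this theorem; it is quoted from Fradelizi et al.\ \cite{fradelizi1} and used as a black box, so there is nothing in the paper to compare against. Your argument is correct: the upward-closure of admissible $\lambda$, the identity $S+\lambda\conv(S)=(1+\lambda)\conv(S)$ for $\lambda\ge c(S)$, and the two-sided inclusion for $G=(1+\lambda)\conv(B)+C+\lambda\conv(C)$ all check out, and the limiting argument in $\lambda$ cleanly handles the case where the infima defining $c(A+B)$ or $c(B+C)$ are not attained. The sandwich computation for $G$ is indeed the load-bearing step and is essentially the argument in the cited source.
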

In section \ref{fractionally_subadditive} we will show that this upper bound can be used to prove a larger set of inequalities for the Schneider non-convexity index.

\subsection{The Lyusternik region}\label{lyusternik}
The Lyusternik region, introduced in the paper \cite{barthe1} is defined as follows. 
\begin{definition}[The Lyusternik region]
    For positive integers $m,n$, let $K_n(m)$ denote the set of all $m$-tuples $A=(A_1,\dots,A_m)$ of non-empty, compact sets in $\mathbb{R}^n$. Let $2^{[m]}$ denote the collection of all subsets of $[m]:=\{1,\dots,m\}$. For each $A\in K_n(m)$, define the function $v_A:2^{[m]}\rightarrow\mathbb{R}$ by
\begin{equation*}
    v_A(S):=\left|\sum_{i\in S}A_i\right|,
\end{equation*}
where $|A|$ denotes the Lebesgue measure of the set $A$. The $(n,m)$-Lyusternik region is defined by
\begin{equation*}
    \Lambda_{n}(m):=\{v_A: A\in K_n(m)\}.
\end{equation*}
The set $\Lambda_n(m)$ can be viewed as a subset of $\mathbb{R}^{2^m}$ in the following way. Choose an ordering $S_1,\dots,S_{2^m}$ of $2^{[m]}$. Then the function $v_A$ corresponds to the vector $(v_A(S_1),\dots,v_A(S_{2^m}))$. For example, if $m=2$, then the function $v_A$ will be associated with the vector $(0,|A_1|,|A_2|,|A_1+A_2|)$. In fact, if we wanted we could drop the first component corresponding to the empty set, since that component will always be $0$.
\end{definition}
What follows is some terminology which is of interest in the study of the Lyusternik region.
\begin{enumerate}
    \item \textbf{Fractional partitions:} For an integer $m\geq2$, a set $\varnothing\neq\mathcal{G}\subseteq 2^{[m]}$ of subsets of $[m]$ is called a \textit{hypergraph}. If $\beta:\mathcal{G}\rightarrow[0,\infty)$ is some function, then the pair $(\mathcal{G},\beta)$ is called a \textit{weighted hypergraph}. If, in addition, the function $\beta$ (which can be thought of as an assignment of weights to each of the sets $S\in\mathcal{G}$) satisfies: $\forall i\in[m]$, $\sum_{S\in\mathcal{G}:i\in S}\beta(S)=1$, then the pair $(\mathcal{G},\beta)$ is called a \textit{fractional partition} of $[m]$.
    \item \textbf{Fractional superadditive/subadditive set functions:} A function $f:2^{[m]}\rightarrow[0,\infty)$ is called \textit{fractionally superadditive} if for each $T\subseteq[m]$,
\begin{equation*}
    f(T)\geq\sum_{S\in\mathcal{G}}\beta(S)f(S)
\end{equation*}
is true for any fractional partition $(\mathcal{G},\beta)$ of $T$. If the inequality sign is flipped, then the set function is called \textit{fractionally subadditive}. We will use the notation $\Gamma_{FSA}(m)$ to denote the set of all fractionally superadditive functions $f$ as defined above for which $f(\varnothing)=0$.
    \item \textbf{Supermodular/submodular set functions:} A function $f:2^{[m]}\rightarrow[0,\infty)$ is called $\textit{supermodular}$ if for any $S,T\subseteq[m]$,
\begin{equation*}
    f(S\cup T)+f(S\cap T)\geq f(S)+f(T).
\end{equation*}
If the inequality sign is flipped, then the function is called \textit{submodular}. We will use the notation $\Gamma_{SM}(m)$ to denote the set of all supermodular functions $f$ as defined above such that $f(\varnothing)=0$.
\end{enumerate}  
The following result, which is proved in \cite{barthe1}, summarizes what is known about the Lyusternik region and its relation with $\Gamma_{FSA}(m)$ and $\Gamma_{SM}(m)$.
\begin{theorem}[Barthe and Madiman \cite{barthe1}]\label{lyusternik_region}
    The following statements are true. 
    \begin{enumerate}
        \item In the case that $m=2$, and $n\geq1$, the Lyusternik region is characterized by 
    \begin{equation*}
        \Lambda_{n}(2)=\{(0,a,b,c)\in\mathbb{R}^{4}_{+}:c\geq(a^{\frac{1}{n}}+b^{\frac{1}{n}})^n\}.
    \end{equation*}
    In fact, we have the relation
    \begin{equation*}
        \Lambda_{1}(2)=\Gamma_{SM}(2)=\Gamma_{FSA}(2).
    \end{equation*}
        \item If $m\geq3$, then $\Lambda_{n}(m)\subsetneq\Gamma_{FSA}(m)$.
        \item If $m\geq3$, then $\Lambda_n(m)$ and $\Gamma_{SM}(m)$ have nonempty intersection, but neither is a subset of the other.
    \end{enumerate}
\end{theorem}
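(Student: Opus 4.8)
The three parts call for rather different tools, so the plan is to prove them separately.

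\textbf{Part 1.} For the characterization of $\Lambda_n(2)$, write $v_A=(0,a,b,c)$ with $a=|A_1|$, $b=|A_2|$, $c=|A_1+A_2|$. The Brunn--Minkowski inequality gives $c^{1/n}\geq a^{1/n}+b^{1/n}$, that is $c\geq(a^{1/n}+b^{1/n})^n$, which is the ``$\subseteq$'' inclusion. For the reverse inclusion I would realize every admissible triple $(a,b,c)$ by an explicit pair: fix $A_1$ to be a ball of volume $a$ and take $A_2$ to be a disjoint union of $N$ widely separated balls of total volume $b$. The case $N=1$ (a single homothetic ball) attains the equality value $(a^{1/n}+b^{1/n})^n$, while $N\to\infty$ drives $|A_1+A_2|\to\infty$, and a continuity argument in the separation parameter fills every intermediate value via the intermediate value theorem. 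For the identities $\Lambda_1(2)=\Gamma_{SM}(2)=\Gamma_{FSA}(2)$ I would simply unwind the definitions: the only nontrivial supermodular inequality is $c\geq a+b$, the binding fractional partition $\{\{1\},\{2\}\}$ of $[2]$ yields the same constraint, and for $n=1$ the Brunn--Minkowski threshold equals $a+b$, so all three sets coincide with $\{(0,a,b,c):a,b\geq0,\ c\geq a+b\}$.

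\textbf{Part 2.} The inclusion $\Lambda_n(m)\subseteq\Gamma_{FSA}(m)$ is precisely the fractional superadditivity of Lebesgue measure for Minkowski sums established by Barthe and Madiman, which I would invoke as known. The substance is the strictness for $m\geq3$, for which I must produce $f\in\Gamma_{FSA}(m)$ realized by no tuple of compact sets. My plan is to exploit a realizability constraint that fractional superadditivity does not capture: any realizable vector must satisfy the full Brunn--Minkowski bound $v_A(\{i,j\})\geq(v_A(\{i\})^{1/n}+v_A(\{j\})^{1/n})^n$ on every pair, which for $n\geq2$ is strictly stronger than the additive bound $v_A(\{i\})+v_A(\{j\})$ forced by fractional superadditivity. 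I would therefore take a supermodular $f$ (hence fractionally superadditive, since supermodular functions vanishing at $\varnothing$ lie in $\Gamma_{FSA}(m)$) with $f(\{i\})=f(\{j\})=1$ and $f(\{i,j\})$ chosen in the gap $(2,\,2^n)$, extended to the remaining subsets so as to remain supermodular; such an $f$ cannot be a vector of sumset volumes.

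\textbf{Part 3.} Each assertion is settled by an example. For nonempty intersection, taking all $A_i$ equal to a fixed ball $B$ gives $v_A(S)=|S|^n|B|$ (with $|S|$ the cardinality of $S$), and since $t\mapsto t^n$ is convex while $|S\cup T|+|S\cap T|=|S|+|T|$, the resulting function is supermodular, so it lies in $\Lambda_n(m)\cap\Gamma_{SM}(m)$. For $\Gamma_{SM}(m)\not\subseteq\Lambda_n(m)$ I would reuse the Brunn--Minkowski--violating supermodular function from Part 2. For $\Lambda_n(m)\not\subseteq\Gamma_{SM}(m)$ I would exhibit sumset volumes that fail supermodularity: in $\mathbb{R}$ take $A_1=[0,1]$, $A_2=\{0,1\}$ and $A_3=[0,1]\cup[2,3]$, so that $A_1$ and $A_2$ each ``fill the gap'' of $A_3$; a direct computation gives $|A_1+A_2+A_3|+|A_3|=7<8=|A_1+A_3|+|A_2+A_3|$, violating the supermodular inequality for $S=\{1,3\}$, $T=\{2,3\}$ (and the same comb construction extends to every dimension by taking products).

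\textbf{Main obstacle.} The routine steps are the Brunn--Minkowski lower bound and the $m=2$ identities. The real difficulty lies in the separations for $m\geq3$: fractional superadditivity already encodes the ``combinatorial'' constraints, so both the strictness in Part 2 and the non-containments in Part 3 require identifying genuinely geometric obstructions to realizability --- the superadditive strengthening coming from Brunn--Minkowski and the failure of supermodularity witnessed by gap-filling (``comb'') configurations --- and then certifying that the chosen witnesses land in the intended region while provably escaping the other. Verifying non-realizability, namely ruling out \emph{every} tuple $A$, is the most delicate point; the clean case is $n\geq2$, where the Brunn--Minkowski gap $2^n>2$ does the work, and I would expect the $n=1$ instances to require a sharper realizability obstruction, since there the Brunn--Minkowski and supermodular pairwise bounds coincide.
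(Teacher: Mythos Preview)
The paper does not contain a proof of this theorem. It is stated, with attribution to Barthe and Madiman, purely as background; the sentence immediately preceding it reads ``The following result, which is proved in \cite{barthe1}, summarizes what is known about the Lyusternik region\ldots'' and no argument follows. There is therefore nothing in the present paper to compare your proposal against.

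That said, your sketch has a real gap that you yourself identify only partially. In Parts~2 and~3 your non-realizability witness is a supermodular $f$ with $f(\{i\})=f(\{j\})=1$ and $f(\{i,j\})\in(2,2^n)$, relying on the strict inequality $2^n>2$. For $n=1$ this interval is empty and the argument collapses, so as written you have not established $\Lambda_1(m)\subsetneq\Gamma_{FSA}(m)$ nor $\Gamma_{SM}(m)\not\subseteq\Lambda_1(m)$ for $m\ge 3$. A different one-dimensional obstruction is needed---for instance, constraints coming from the equality case of $|A+B|=|A|+|B|$ applied across several pairs simultaneously, or (as the present paper later exploits for a related purpose) points such as $(0,0,0,0,0,1,1,1)$ that lie in $\Gamma_{FSA}(3)$ but, by the Brunn--Minkowski equality conditions, cannot be realized. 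Your Part~1 and the $\Lambda_n(m)\not\subseteq\Gamma_{SM}(m)$ example in Part~3 look fine.
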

The first statement of Theorem \ref{lyusternik_region} shows that the Lyusternik region is completely determined by the Brunn-Minkowski-Lyusternik inequality for compact sets \cite{lyusternik1} when the number of sets being added is $m=2$. We also note that related to the Lyusternik region is the Stam region \cite{madiman1}.

\subsection{Outline of the paper}
The motivation of the results in this paper came from the papers \cite{barthe1,fradelizi1,schneider1}. It is natural to attempt to imitate the idea of the Lyusternik region with one of the measures of non-convexity, and the Schneider non-convexity index is a natural candidate for this. What follows is an outline of the new contributions this paper adds to the study of the Schneider non-convexity index and the Lyusternik region.
\begin{enumerate}
    \item \textbf{Fractional subadditivity of $c$:} In Section \ref{fractionally_subadditive} we will use the inequality in Theorem \ref{strong bound} along with a simplification from the paper \cite{barthe1} to prove that the Schneider non-convexity index is fractionally subadditive.
    \item \textbf{The Schneider region:} In Section \ref{schneider_region} we will define the Schneider region, denoted by $S_n(m)$, which is defined in an analogous way to the Lyusternik region, except that the underlying set function is the Schneider non-convexity index instead of Lebesgue measure. We will give a characterization of the region $S_1(2)$, which turns out to be a difficult problem and actually leads to the main result of this paper which is explained in the next item.
    \item \textbf{A best lower bound:} In Section \ref{best_lower_bound_for_cartesian_product_sets}, we will prove that if $A_1$ and $A_2$ are non-empty, compact sets in $\mathbb{R}$, then the index of the sumset $c(A_1+A_2)$ has a best lower bound in terms of the indices $c(A_1)$ and $c(A_2)$. This lower bound can then be extended to a lower bound for Cartesian product sets in $\mathbb{R}^n$, which is explained in Section \ref{cartesian_bound}. It turns out that it is tedious to prove this is a lower bound, even in the one-dimensional case. We summarize some of the unanswered questions in Section \ref{open_problems}.
    \item \textbf{Closure properties of the Lyusternik region:} In Section \ref{closure_properties} we will define a one-dimensional fractal set, and then use it to characterize a small piece of the region $\Lambda_{n}(3)$. It will follow that the region $\Lambda_{n}(m)$ is not a closed set when $m\geq3$, which contrasts with the known characterization for $\Lambda_{n}(2)$.
\end{enumerate}

\subsection{Related research}
For work related to fractional superadditivity, see \cite{bobkov1,barthe1,fradelizi2}. These papers cover a conjecture made by Bobkov et al. of a fractional generalization the Brunn-Minkowski inequality. For the case where the sets are compact, but not convex the conjecture was resolved in dimension 1, but counter examples were found in dimension $n\geq7$. Around the same time a special case of the conjecture was verified in dimensions $2$ and $3$ for star-shaped sets. In fact, before the fractional Brunn-Minkowski conjecture, it was shown that the power entropy of a random variable is fractionally superadditive \cite{artstein1,madiman1,madiman2}.  

For references on supermodularity, we note the paper \cite{ollagnier1} where it is proved that supermodularity implies fractional superadditivity (under the condition that the empty set is mapped to $0$), and in \cite{fradelizi3}, it is proved that volume satisfies a more generalized supermodularity property known as m-supermodularity, where the higher order supermodularity seems to have originated in \cite{foldes1}.

An area of study related to this study of the Schneider non-convexity index is the study of the convexification property that Minkowski summation has on compact sets. This fundamental observation is given by the Shapley-Folkman theorem \cite{emerson1,starr1,starr2} which states that the $k$th set average $A(k):=\frac{1}{k}\sum_{i=1}^{k}A$ of a compact set $A$ converges in volume deficit (if $|A(k)|>0$ for some $k$ when $\textup{card}(A)>1$) and the Hausdorff metric to $\textup{conv}(A)$. Recently this property has been studied in the infinite dimensional case \cite{artstein2}.

\subsection{Acknowledgements}
I thank Robert Fraser and Buma Fridman for many helpful discussions on this topic. I especially thank Robert Fraser for taking the time to carefully read through this paper and for offering helpful corrections and advice. The author of this paper was supported in part by the National Science Foundation LEAPS - Division of Mathematical Sciences Grant No. 2316659.

\section{The best lower bound}\label{best_lower_bound_for_cartesian_product_sets}
\subsection{Notation}
\begin{enumerate}
\item \textbf{A useful formula:} For a compact set $A\subseteq\mathbb{R}$, we define the largest gap in $A$ by
\begin{equation*}
    G(A):=\inf\{\varepsilon\geq0:A+[0,\varepsilon]\textup{ is convex}\}.
\end{equation*}
The diameter of $A$ is defined by
\begin{equation*}
    \textup{diam(A)}:=|\textup{conv}(A)|.
\end{equation*}
For the one-dimensional case, the Schneider non-convexity index of the set $A$ can be computed with the convenient formula
\begin{equation*}
    c(A)=\frac{G(A)}{\textup{diam}(A)}.
\end{equation*}
To verify that this formula is correct, let $A$ be compact, and translated so that $\min(A)=0$. Then we can write $\textup{conv}(A)=[0,a]$, where $a>0$. Observe that
\begin{equation*}
    \begin{split}
        c(A)&=\inf\{\lambda\geq0:A+\lambda\textup{conv}(A)\text{ is convex}\}\\
        &=\inf\{\lambda\geq0:A+a[0,\lambda]\text{ is convex}\}\\
        &=\inf\left\{\frac{\varepsilon}{a}\geq0:A+a\left[0,\frac{\varepsilon}{a}\right]\text{ is convex}\right\}\\
        &=\frac{1}{a}\inf\{\varepsilon\geq0:A+[0,\varepsilon]\text{ is convex}\}\\
        &=\frac{G(A)}{\textup{diam}(A)}.
    \end{split}
\end{equation*}
\item We will define the function $r:[0,1]\rightarrow[0,1]$ by
\begin{equation*}
    r(c)=\frac{1-c}{1+c}.
\end{equation*}
To simplify notation, if $\sigma\in S_k$ (the group of permutations on $[k]:=\{1,\dots,k\}$), and $\{A_j\}_{j=1}^{k}$ are compact sets, then we will define
\begin{equation*}
    r_{\sigma(j)}:=r\left(c(A_{\sigma(j)})\right).
\end{equation*}
\item \textbf{R-sets and L-sets:} For $0\leq \alpha<1$ and $\alpha^{\prime}\geq0$, suppose $A$ is of the form $A=[0,\alpha]\cup[1,1+\alpha^{\prime}]$. If $\alpha>\alpha^{\prime}$, then $A$ is called an \textit{L-set} (i.e the left interval is longer than the right interval). If $\alpha<\alpha^{\prime}$, then $A$ is called an \textit{R-set} (i.e the right interval is longer than the left interval). In the case that $\alpha=\alpha^{\prime}$, $A$ is called a \textit{balanced} set.
\item If $m\geq1$ is some integer and $A_1,\dots,A_m$ are sets in $\mathbb{R}^n$, then for a set $S\subseteq [m]$ we will define $A_{S}:=\sum_{i\in S}A_i$.
\item If $a$ and $b$ are real numbers, such that either $\textup{sign}(a)=\textup{sign}(b)$ or $a=b=0$, then we will use the notation $a\approx b$. It will be convenient to use this notation with the derivative of a rational function $f(x)=\frac{p(x)}{q(x)}$. The derivative satisfies $f^{\prime}(x)\approx q(x)p^{\prime}(x)-p(x)q^{\prime}(x)$. Also, if $\gamma(n_1,n_2)$ is a rational function of two variables, then we will use the notation $D_j\gamma(n_1,n_2)$ to denote the partial derivative of $\gamma$ with respect to the $j$th coordinate evaluated at $(n_1,n_2)$.
\end{enumerate}
\subsection{Compact sets in one-dimension}
The objective of this subsection is to prove 
\begin{theorem}\label{Lower_bound}
    Let $A_1$ and $A_2$ be non-empty, compact sets in $\mathbb{R}$. Then
    \begin{equation*}
        c(A_1+A_2)\geq\max\left(0,\min_{\sigma\in S_2}\frac{1-r_{\sigma(1)}-2r_{\sigma(2)}}{3+r_{\sigma(1)}+2r_{\sigma(2)}}\right).
    \end{equation*}
    This is the best lower bound in the sense that if $c_1,c_2\in[0,1]$, then
    \begin{equation*}
        \min_{\substack{A\in\mathcal{K}_1(2)\\c(A_j)=c_j\\j\in[2]}}c(A_1+A_2)=\max\left(0,\min_{\sigma\in S_2}\frac{1-r(c_{\sigma(1)})-2r(c_{\sigma(2)})}{3+r(c_{\sigma(1)})+2r(c_{\sigma(2)})}\right).
    \end{equation*}
    That is, minimizing $c(A_1+A_2)$ over all the compact sets $A_1,A_2$ with fixed convexity indices $c_1,c_2$ respectively, achieves exactly the lower bound.
\end{theorem}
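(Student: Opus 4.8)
The plan is to use the one-dimensional identity $c(A)=G(A)/\mathrm{diam}(A)$ throughout, and to separate the statement into a lower bound, $c(A_1+A_2)\geq m(c_1,c_2)$ for all compact $A_1,A_2$ (writing $m(c_1,c_2)$ for the right-hand side), and an achievability claim that the bound is attained. The achievability is the soft half, handled by explicitly constructing extremal sets, so the substance is the lower bound.

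The first step is a gap-filling reduction to two-interval sets. For compact $A\subseteq\mathbb{R}$ with $G(A)>0$, the open set $\mathrm{conv}(A)\setminus A$ is a countable disjoint union of intervals whose lengths are summable and hence tend to $0$, so the supremal gap length $G(A)$ is actually attained by some gap $(u,v)$. Replacing $A_i$ by $A_i':=[\min A_i,u_i]\cup[v_i,\max A_i]$ — filling every gap except one longest one — yields a two-interval set with $\mathrm{conv}(A_i')=\mathrm{conv}(A_i)$ and $G(A_i')=G(A_i)$, hence $c(A_i')=c(A_i)$, and with $A_i'\supseteq A_i$. Because $\mathrm{diam}(A_1'+A_2')=\mathrm{diam}(A_1+A_2)$ and $A_1'+A_2'\supseteq A_1+A_2$, the larger set has no larger gaps, so $c(A_1'+A_2')\leq c(A_1+A_2)$. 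Thus the lower bound for arbitrary compact sets follows once I show that $c(A_1+A_2)\geq m(c_1,c_2)$ for all two-interval sets with indices $c_1,c_2$.

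For two-interval sets I would write $A_1=[0,a_1]\cup[a_1+g_1,D_1]$ and similarly for $A_2$, with $g_i=c_iD_i$ the single gap and $a_i+b_i=(1-c_i)D_i$; scale-invariance lets me fix $D_1=1$, leaving the two interval-splits $a_1,a_2$ (the L-set/R-set/balanced shape) and the relative scale $s=D_2$ as the only parameters. The sumset $A_1+A_2$ is a union of four intervals with endpoints linear in these parameters, and $c(A_1+A_2)=G(A_1+A_2)/(1+s)$, where $G$ is the largest of the at most three gaps between the intervals after sorting by left endpoint; on each region where the ordering and the active gap are fixed, this is a ratio of affine functions, and I would locate its critical points by the sign rule for derivatives of rational functions indicated by the $\approx$ and $D_j\gamma$ notation of the excerpt. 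The governing phenomenon, already transparent for balanced sets $A_1=[0,r_1]\cup[1,1+r_1]$, $A_2=s([0,r_2]\cup[1,1+r_2])$, is that the four intervals all have a common length $L=r_1+sr_2$ and start at $0,s,1,1+s$, producing gaps $s-L$, $1-s-L$, $s-L$; two of these move oppositely in $s$ and equalize precisely at $s=\tfrac12$ (respectively $s=2$ after swapping $A_1,A_2$), where all three gaps coincide and the common value of $c(A_1+A_2)$ is exactly
\[
\frac{1-2r_1-r_2}{3+2r_1+r_2}=\frac{1-r_{\sigma(1)}-2r_{\sigma(2)}}{3+r_{\sigma(1)}+2r_{\sigma(2)}},\qquad \sigma=(1\,2),
\]
the two scale choices yielding the two permutations in $S_2$. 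When $1-r_{\sigma(1)}-2r_{\sigma(2)}\leq 0$ for some $\sigma$, one can instead choose $s$ making every gap nonpositive, so $A_1+A_2$ is an interval and $c(A_1+A_2)=0$; this is recorded by the outer $\max(0,\cdot)$.

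The main obstacle is the optimization over the two-interval family: proving that the balanced configuration is the \emph{global} minimizer rather than one critical point among several. Concretely, I must rule out that some L-set or R-set shape, at some scale, drives $c(A_1+A_2)$ below $m(c_1,c_2)$, and this forces a case analysis over the orderings and overlap patterns of the four sumset intervals, with the sign-of-derivative computation carried out region by region and the interior critical point compared against the boundary (degenerate) configurations. This is the step the excerpt flags as tedious. Once the two-interval minimum is pinned to $m(c_1,c_2)$, the gap-filling reduction upgrades it to the lower bound for all compact sets, and evaluating $c(A_1+A_2)$ on the balanced sets whose scale ratio selects the minimizing permutation confirms the bound is attained, giving both directions of the theorem.
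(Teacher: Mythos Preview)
Your proposal is correct and follows essentially the same route as the paper: the gap-filling reduction to two-interval sets is the paper's Lemma~2.2, your L/R/balanced parametrization matches Lemma~2.1, the balanced extremizers at scale ratio $1{:}2$ are exactly the sets used in the paper's achievability argument, and the case analysis you flag as the main obstacle is precisely what the paper carries out in Lemmas~2.3--2.6 (one lemma for each of the four L/R pairings, each split into subcases by which gap is active, with the same monotonicity-in-$m$ and equate-then-minimize technique you describe).
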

The next result gives a characterization of L-sets and R-sets in terms of the function $r$.
\begin{lemma}\label{LR-set}
Let $A$ be a nonempty, compact set in $\mathbb{R}$, let $c:=c(A)$ denote the convexity index of $A$, and define the number
\begin{equation*}
    r:=\frac{1-c}{1+c}.
\end{equation*}
Then the following characterizations hold.
\begin{enumerate}
    \item If $A$ is an $L$-set, then we can write
    \begin{equation*}
        A=\left[0,r+n\left(\frac{1-r}{2}\right)\right]\cup\left[1,1+r-n\left(\frac{1+r}{2}\right)\right],
    \end{equation*}
    where $n>0$ is a real number which satisfies $n(\frac{1+r}{2})\leq r$.
    \item If $A$ is an $R$-set, then we can write
    \begin{equation*}
        A=\left[0,r-n\left(\frac{1-r}{2}\right)\right]\cup\left[1,1+r+n\left(\frac{1+r}{2}\right)\right],
    \end{equation*}
    where $n>0$ is a real number which satisfies $n(\frac{1-r}{2})\leq r$.
    \item If $A$ is a balanced set, then we can write $A=\left[0,r\right]\cup\left[1,1+r\right]$.
\end{enumerate}
\end{lemma}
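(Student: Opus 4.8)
The plan is to treat this lemma as a reparametrization statement: an $L$-set (resp. $R$-set) is by definition a set of the form $A=[0,\alpha]\cup[1,1+\alpha']$ with $\alpha>\alpha'$ (resp. $\alpha<\alpha'$), and the goal is to re-express the pair $(\alpha,\alpha')$ in terms of the convexity index $c$ (through $r$) and a single imbalance parameter $n$. The only tool needed is the one-dimensional formula $c(A)=G(A)/\textup{diam}(A)$ recalled in the notation.

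First I would compute $c$ directly for such a set. The convex hull is $[0,1+\alpha']$, so $\textup{diam}(A)=1+\alpha'$, while the unique gap is the interval $(\alpha,1)$, giving $G(A)=1-\alpha$; hence $c=c(A)=\frac{1-\alpha}{1+\alpha'}$. Substituting this into $r=\frac{1-c}{1+c}$ and simplifying yields
\begin{equation*}
 r=\frac{\alpha+\alpha'}{2-(\alpha-\alpha')}.
\end{equation*}
The balanced case $\alpha=\alpha'$ is then immediate: the formula collapses to $r=\alpha=\alpha'$, which is exactly statement (3).

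For the $L$-set case I would set $n:=\alpha-\alpha'$, which is positive precisely because the set is an $L$-set, and then verify the two proposed identities $\alpha=r+n\frac{1-r}{2}$ and $\alpha'=r-n\frac{1+r}{2}$ by direct substitution of the expression for $r$ found above. Since the difference of the two right-hand sides is automatically $n\frac{1-r}{2}+n\frac{1+r}{2}=n=\alpha-\alpha'$, it suffices to check a single identity, say $2(\alpha-r)=n(1-r)$; clearing the denominator $2-(\alpha-\alpha')$ reduces this to a polynomial identity in $\alpha,\alpha'$. Finally, the side condition $n\frac{1+r}{2}\le r$ is nothing more than the requirement $\alpha'\ge 0$ rewritten in the new variables, so it holds automatically. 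The $R$-set case is handled symmetrically with $n:=\alpha'-\alpha>0$, where now the constraint $n\frac{1-r}{2}\le r$ encodes $\alpha\ge 0$.

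I do not expect a genuine obstacle here, since the argument is purely computational; the one point demanding care is the consistency check that a single value of $n$ simultaneously realizes both $\alpha$ and $\alpha'$. This is exactly where the defining relation between $(\alpha,\alpha')$ and $r$ is used, and verifying the resulting polynomial identity — rather than solving the two equations separately and hoping they agree — is the cleanest route. One may optionally record the converse (that every admissible choice of $r\in[0,1)$ and $n>0$ meeting the stated inequality produces a genuine $L$-set or $R$-set with index $c$), which will be convenient when the parametrization is invoked in the proof of Theorem \ref{Lower_bound}.
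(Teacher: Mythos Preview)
Your proposal is correct and follows essentially the same approach as the paper: both set $n:=\alpha-\alpha'$ for the $L$-set case, use the formula $c=\frac{1-\alpha}{1+\alpha'}$, and solve the resulting linear relations. The only cosmetic difference is that the paper solves directly for $\alpha'$ from $c=\frac{1-\alpha'-n}{1+\alpha'}$ (obtaining $\alpha'=r-n\frac{1+r}{2}$ in one step), whereas you first express $r$ in terms of $(\alpha,\alpha')$ and then verify the parametrization as a polynomial identity; both routes are equally short.
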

\begin{proof}
    We will prove this result for the case that $A$ is an L-set. If $A$ is an R-set, then the proof will be almost identical, and if $A$ is a balanced set, just set $n=0$ in the characterization for either of the first two parts. When $A$ is an L-set, write $A=[0,\alpha]\cup[1,1+\alpha^{\prime}]$, where $\alpha>\alpha^{\prime}$. Then we can write $\alpha=\alpha^{\prime}+n$, where $n:=\alpha-\alpha^{\prime}$. Then $A$ can be written as $A=[0,\alpha^{\prime}+n]\cup[1,1+\alpha^{\prime}]$. The convexity index of $A$, call it $c$, is given by
    \begin{equation*}
        c=\frac{1-\alpha^{\prime}-n}{1+\alpha^{\prime}}.
    \end{equation*}
    This implies that 
    \begin{equation*}
        \alpha^{\prime}=r-n\left(\frac{1+r}{2}\right).
    \end{equation*}
    The characterization for the L-set follows.
\end{proof}
The purpose of the next result is to verify that filling in the gaps of sets $A$ and $B$ to achieve new sets $A^{\prime}$ and $B^{\prime}$ will give a sumset $A^{\prime}+B^{\prime}$ that is closer to being convex than the sumset $A+B$.
\begin{lemma}\label{filling_gaps}
    Let $A$ and $B$ be nonempty, compact sets in $\mathbb{R}^n$. If $A\subseteq A^{\prime}\subseteq\textup{conv}(A)$ and $B\subseteq B^{\prime}\subseteq\textup{conv}(B)$, then $c(A^{\prime}+B^{\prime})\leq c(A+B)$.
\end{lemma}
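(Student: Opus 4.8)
The plan is to reduce everything to a single fixed convex body and then run a squeeze argument. The first and most important observation is that filling in gaps does not change the convex hull: from $A\subseteq A'\subseteq\conv(A)$ I get $\conv(A)\subseteq\conv(A')$ and $\conv(A')\subseteq\conv(\conv(A))=\conv(A)$, so $\conv(A')=\conv(A)$, and likewise $\conv(B')=\conv(B)$. Using the identity $\conv(X+Y)=\conv(X)+\conv(Y)$ for Minkowski sums, I would then set
\[
    C:=\conv(A+B)=\conv(A)+\conv(B)=\conv(A')+\conv(B')=\conv(A'+B').
\]
In particular the \emph{same} convex set $C$ appears in the Schneider index of both $A+B$ and $A'+B'$, which is exactly what makes the two defining infima directly comparable.

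Next I would record the inclusion chain $A+B\subseteq A'+B'\subseteq C$: the left inclusion is immediate from $A\subseteq A'$ and $B\subseteq B'$, while the right inclusion follows from $A'\subseteq\conv(A)$ and $B'\subseteq\conv(B)$, so that $A'+B'\subseteq\conv(A)+\conv(B)=C$. With this chain in hand, the heart of the argument is to show that every $\lambda$ admissible for $A+B$ is admissible for $A'+B'$. Fix $\lambda\geq0$ with $(A+B)+\lambda C$ convex. Since $C$ is convex and $\lambda\geq0$, we have $C+\lambda C=(1+\lambda)C$, and because $\conv\bigl((A+B)+\lambda C\bigr)=\conv(A+B)+\lambda C=(1+\lambda)C$, convexity of $(A+B)+\lambda C$ forces $(A+B)+\lambda C=(1+\lambda)C$. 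Adding $\lambda C$ throughout the inclusion chain and squeezing gives
\[
    (1+\lambda)C=(A+B)+\lambda C\subseteq(A'+B')+\lambda C\subseteq C+\lambda C=(1+\lambda)C,
\]
so $(A'+B')+\lambda C=(1+\lambda)C$ is convex as well. Hence the set of admissible $\lambda$ for $A+B$ is contained in that for $A'+B'$, and taking infima yields $c(A'+B')\leq c(A+B)$. Note that this route never requires the infimum to be attained, which sidesteps any compactness bookkeeping about whether the minimizing dilate exists.

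The steps are all elementary, so there is no single heavy computation to grind through; the point I expect to be the main obstacle — and the one I would be most careful to justify — is that the convex hulls genuinely coincide. This equality is precisely what guarantees that both Schneider indices are computed relative to the same dilating body $C$; were it to fail, the two infima would be taken over incomparable families of dilates $X+\lambda\conv(X)$ and the squeeze would break down. The two standard facts I would lean on are $\conv(X+Y)=\conv(X)+\conv(Y)$ and the identity $C+\lambda C=(1+\lambda)C$ for convex $C$ and $\lambda\geq0$; everything else is pure set inclusion.
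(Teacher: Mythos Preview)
Your proof is correct and follows essentially the same squeeze argument as the paper: both identify $\conv(A+B)=\conv(A'+B')=:C$, note that convexity of $(A+B)+\lambda C$ forces it to equal $(1+\lambda)C$, and then sandwich $(A'+B')+\lambda C$ between $(A+B)+\lambda C$ and $C+\lambda C$. You are simply more explicit than the paper about the intermediate facts (the equality $\conv(A')=\conv(A)$ and the identity $C+\lambda C=(1+\lambda)C$), but the logic is the same.
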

\begin{proof}
    Using the notation given in the statement, if $\lambda\geq c(A+B)$, then
    \begin{equation*}
    \begin{split}
        A+B+\lambda\textup{conv}(A+B)&=\textup{conv}(A)+\textup{conv}(B)+\lambda\textup{conv}(A+B)\\
        &\supseteq A^{\prime}+B^{\prime}+\lambda\textup{conv}(A^{\prime}+B^{\prime})\\
        &\supseteq A+B+\lambda\textup{conv}(A+B).
    \end{split}
    \end{equation*}
    It follows that 
    \begin{equation*}
        A+B+\lambda\textup{conv}(A+B)=A^{\prime}+B^{\prime}+\lambda\textup{conv}(A^{\prime}+B^{\prime}).
    \end{equation*}
    Since the set on the left is convex, the set on the right must also be convex. Therefore $\lambda\geq c(A^{\prime}+B^{\prime})$, from which it follows that $c(A^{\prime}+B^{\prime})\leq c(A+B)$.
\end{proof}
The proof of Theorem \ref{Lower_bound} will be broken into four different cases, which are verified in the next four lemmata.
\begin{lemma}\label{L-set,R-set}
    Let $A_1$ be an L-set and $A_2$ be an R-set such that 
    \begin{equation}\label{positive_assumption}
        \min_{\sigma\in S_2}\left(1-r_{\sigma(1)}-2r_{\sigma(2)}\right)>0.
    \end{equation}
    Then for any $m\in(0,1]$,
    \begin{equation}\label{lower_bound_inequality}
        c(A_1+mA_2)\geq\min_{\sigma\in S_2}\frac{1-r_{\sigma(1)}-2r_{\sigma(2)}}{3+r_{\sigma(1)}+2r_{\sigma(2)}}.
    \end{equation}
\end{lemma}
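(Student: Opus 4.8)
The plan is to reduce the inequality \eqref{lower_bound_inequality} to an explicit low-dimensional optimization by making the sumset completely concrete. Writing $r_j:=r(c(A_j))$ and applying Lemma \ref{LR-set}, I would parametrize the $L$-set as $A_1=[0,a_1]\cup[1,1+b_1]$ with $a_1=r_1+n_1\frac{1-r_1}{2}$ and $b_1=r_1-n_1\frac{1+r_1}{2}$, and the $R$-set as $A_2=[0,a_2]\cup[1,1+b_2]$ with $a_2=r_2-n_2\frac{1-r_2}{2}$ and $b_2=r_2+n_2\frac{1+r_2}{2}$, where $n_1,n_2>0$ are the parameters supplied by that lemma. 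Scaling gives $mA_2=[0,ma_2]\cup[m,m+mb_2]$, so $A_1+mA_2$ is a union of four intervals whose left endpoints are $0<m\le 1<1+m$ and whose widths are the four pairwise sums drawn from $\{a_1,b_1\}$ and $\{ma_2,mb_2\}$. In particular $\textup{diam}(A_1+mA_2)=1+m+b_1+mb_2=:D$, and by the one-dimensional formula $c=G/\textup{diam}$ it remains only to locate the largest gap $G$.

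Next I would catalogue the three gaps between consecutive clusters: $G_1=m(1-a_2)-a_1$ (between the clusters at $0$ and $m$), $G_2=1-m-a_1-mb_2$ (between those at $m$ and $1$), and $G_3=m(1-a_2)-b_1$ (between those at $1$ and $1+m$). Since $A_1$ is an $L$-set we have $a_1>b_1$, whence $G_1=G_3-(a_1-b_1)<G_3$, so $G_1$ never dominates and $G=\max(G_2,G_3)$ whenever the clusters separate in the expected order. The two monotonicity facts I would establish, both via the sign rule $f'\approx qp'-pq'$ recorded in the notation, are that $G_2/D$ is decreasing in $m$ while $G_3/D$ is increasing in $m$; since $G_2-G_3=1-2m-n_1-mn_2$ is decreasing and vanishes at the crossover, the minimum over $m\in(0,1]$ is attained there. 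Using the identities $a_1-b_1=n_1$ and $b_2-a_2=n_2$, the crossover is $m^{\ast}=\frac{1-n_1}{2+n_2}$, which the feasibility constraint $n_1\le\frac{2r_1}{1+r_1}<1$ places in $(0,1)$.

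The pleasant feature is that at $m=m^{\ast}$ the factor $2+n_2$ is absorbed and the dependence on $n_2$ cancels entirely, leaving
\[
c(A_1+m^{\ast}A_2)=\frac{1-2r_1-r_2+n_1(r_1+r_2)}{3+2r_1+r_2-n_1(2+r_1+r_2)}.
\]
A further application of the sign rule shows the derivative in $n_1$ has the sign of $(r_1+r_2)\bigl(3+2r_1+r_2-n_1(2+r_1+r_2)\bigr)+(2+r_1+r_2)\bigl(1-2r_1-r_2+n_1(r_1+r_2)\bigr)$, and both bracketed factors are positive — the first is $D>0$, and the second is the gap $G>0$ because \eqref{positive_assumption} forces $1-2r_1-r_2=1-r_{\sigma(1)}-2r_{\sigma(2)}>0$ for the transposition $\sigma$. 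Hence $c(A_1+m^{\ast}A_2)$ increases in $n_1$, its infimum is approached as $n_1\to 0^{+}$, and it equals $\frac{1-2r_1-r_2}{3+2r_1+r_2}$, which is exactly the transposition term $\frac{1-r_{\sigma(1)}-2r_{\sigma(2)}}{3+r_{\sigma(1)}+2r_{\sigma(2)}}$. Since this is at least $\min_{\sigma\in S_2}$ of the two terms, \eqref{lower_bound_inequality} follows.

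The main obstacle is that the clean identification $G=\max(G_2,G_3)$ holds only in the regime where the four clusters are pairwise separated in the natural order; for other ranges of $(m,n_1,n_2)$ neighboring clusters can merge, or the wide middle cluster at $m$ (of width $a_1+mb_2$) can overrun the cluster at $1$ and even reach the one at $1+m$, changing which differences are genuine gaps. Systematically handling these degenerate configurations, and checking that in each the largest actual gap still yields a value no smaller than the claimed bound, is the tedious core of the proof; it is also precisely where the hypothesis \eqref{positive_assumption} is indispensable, since it is what keeps the extremal gap — and therefore $c(A_1+mA_2)$ — strictly positive throughout.
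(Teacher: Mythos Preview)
Your approach is exactly the paper's, and what you have carried out coincides with the paper's Case~1. The gap is that you have catalogued only three gap candidates, whereas the paper needs four. In your notation, when the ``middle'' cluster $[m,m+a_1+mb_2]$ overruns the cluster $[1,1+b_1+ma_2]$ (i.e.\ $m+a_1+mb_2>1+b_1+ma_2$, equivalently $m>\frac{1-n_1}{1+n_2}$), the true gap immediately to the left of the cluster at $1+m$ is not your $G_3$ but
\[
g_4\;=\;(1+m)-(m+a_1+mb_2)\;=\;1-a_1-mb_2,
\]
and the correct largest-gap formula becomes $G=\max\{G_1,G_2,\min(G_3,g_4),0\}$. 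Your observation $G_1<G_3$ lets you drop $G_1$ only in the regime $\min(G_3,g_4)=G_3$; in the complementary regime the paper lower-bounds $c$ by $\max\{G_1,g_4\}/D$ instead, with crossover at $m=\frac{1}{1+n_2}$ rather than $\frac{1-n_1}{2+n_2}$. So $G_1$, which you discarded, is essential in this second case.

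Two further points distinguish Case~2 from the ``tedious but routine'' picture you sketch. First, the pleasant cancellation of $n_2$ does not recur: substituting $m=\frac{1}{1+n_2}$ leaves the factor $\frac{1+n_2/2}{1+n_2}$, and one must minimize over both $n_1$ and $n_2$ (the paper pushes $n_1$ and $n_2$ to their endpoint constraints $\frac{2r_1}{1+r_1}$ and $\frac{2r_2}{1-r_2}$ and then uses the numerical consequences of \eqref{positive_assumption}, namely $\min(r_1,r_2)\le\tfrac13$ and $\max(r_1,r_2)\le\tfrac12$, to finish). Second, this is not an edge/degenerate configuration: the condition $m>\frac{1-n_1}{1+n_2}$ carves out a genuine half of the parameter range $(m,n_1,n_2)$, so Case~2 is a full parallel analysis, not a boundary check.
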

\begin{proof}
    Since $A_1$ is an L-set and $A_2$ is an R-set, we can use Lemma \ref{LR-set}. Then
    \begin{equation*}
        \begin{split}
            A_1&=\left[0,r_1+n_1\left(\frac{1-r_1}{2}\right)\right]\cup \left[1,1+r_1-n_1\left(\frac{1+r_1}{2}\right)\right]=:I_1\cup J_1,\\
            mA_2&=\left[0,mr_2-mn_2\left(\frac{1-r_2}{2}\right)\right]\cup\left[m,m+mr_2+mn_2\left(\frac{1+r_2}{2}\right)\right]=:I_2\cup J_2.
        \end{split}
    \end{equation*}
    Using Minkowski addition of the sets we get
    \begin{equation*}
        A_1+mA_2=(I_1+I_2)\cup (I_1+J_2) \cup (J_1+I_2) \cup (J_1+J_2).
    \end{equation*}
    For an arbitrary interval $I$, let $L(I)$ denote the left endpoint of $I$, and let $R(I)$ denote the right endpoint of $I$. We will now identify the candidates for the largest gap of the sumset. These candidates are
    \begin{equation*}
        \begin{split}
            g_0(m)&:=L(J_1+I_2)-R(I_1+I_2),\\
            g_1(m)&:=L(I_1+J_2)-R(I_1+I_2),\\
            g_2(m)&:=L(J_1+I_2)-R(I_1+J_2),\\
            g_3(m)&:=L(J_1+J_2)-R(J_1+I_2),\\
            g_4(m)&:=L(J_1+J_2)-R(I_1+J_2).
        \end{split}
    \end{equation*}
    We can calculate these gap candidates (purposely leaving out $g_0(m)$) explicitly as 
    \begin{equation*}
        \begin{split}
            g_1(m)&=m-r_1-mr_2-n_1\left(\frac{1-r_1}{2}\right)+mn_2\left(\frac{1-r_2}{2}\right),\\
            g_2(m)&=1-m-r_1-mr_2-n_1\left(\frac{1-r_1}{2}\right)-mn_2\left(\frac{1+r_2}{2}\right),\\
            g_3(m)&=m-r_1-mr_2+n_1\left(\frac{1+r_1}{2}\right)+mn_2\left(\frac{1-r_2}{2}\right),\\
            g_4(m)&=1-r_1-mr_2-n_1\left(\frac{1-r_2}{2}\right)-mn_2\left(\frac{1+r_2}{2}\right).
        \end{split}
    \end{equation*}

    \begin{claim}\label{biggest_gap_formula}
    The largest gap of the sumset takes the form
    \begin{equation*}
        G(A_1+mA_2)=\max\left\{g_1(m),g_2(m),\min\{g_3(m),g_4(m)\},0\right\}.
    \end{equation*}
    \end{claim}
    \begin{proof}[Proof of Claim \ref{biggest_gap_formula}]
        There are four intervals in the sumset: $I_1+I_2$, $J_1+I_2$, $I_1+J_2$, and $J_1+J_2$. Since $R(I_1+I_2)<R(I_1+J_2)$, the gap $g_0(m)$ is never a candidate for the largest gap (except when $m=1$, in which case $g_1(m)=g_0(m)$). We will show that $R(I_1+I_2)<1=L(J_1+I_2)$. To verify this, we compute
        \begin{equation*}
            \begin{split}
                R(I_1+I_2)&=r_1+mr_2+n_1\left(\frac{1-r_1}{2}\right)-mn_2\left(\frac{1-r_2}{2}\right)\\
                &\leq2r_1+r_2<1.
            \end{split}
        \end{equation*}
        Then $R(I_1+I_2)<1=L(I_2+J_1)$ which implies that the interval $I_1+I_2$ is always some positive distance to the left of the interval $I_2+J_1$ and the gap between these two intervals happens to be $g_0(m)$. The two intervals yet unaccounted for are $I_1+J_2$ and $J_1+J_2$. First, since $L(J_1+J_2)>L(I_2+J_1)$, the interval $J_1+J_2$ can never intersect the gap $g_0(m)$. Since $L(I_1+J_2)=m\in(0,1]$, the interval $I_1+J_2$ will cover part of the gap $g_0(m)$. This implies that $g_1(m)$ and $g_2(m)$ are candidates for the largest gap. The only other possibility for a gap in the sumset is the gap between the intervals $J_1+J_2$ and $I_2+J_1$, which is $g_3(m)$, in the case that $R(I_1+J_2)\leq R(I_2+J_1)$, or the gap between the intervals $J_1+J_2$ and $I_1+J_2$, which is $g_4(m)$, in the case that $R(I_1+J_2)>R(I_2+J_1)$. In other words, the other candidate for the largest gap is $\min(g_3(m),g_4(m))$. It follows that 
        \begin{equation}\label{maximum_term}
            G(A_1+mA_2)=\max\{g_1(m),g_2(m),\min\{g_3(m),g_4(m)\},0\},
        \end{equation}
        where we include the $0$ for the possibility that all the gap candidates are negative (which we will find cannot happen). This completes the proof of the claim.
    \end{proof}
    
For the next part of this lemma, we will omit the $0$ from Claim \ref{biggest_gap_formula}. If it turns out that the lower bound we achieve is a positive number (which it will), then it will follow that the $0$ in the formula for the largest gap is extraneous and we will have proved the lemma. We only need to consider two cases.
\begin{case}
    $\min\{g_3(m),g_4(m)\}=g_3(m)$.
\end{case}
In this case we have
\begin{equation*}
    G(A_1+mA_2)=\max\{g_1(m),g_2(m),g_3(m)\}\geq\max\{g_2(m),g_3(m)\}.
\end{equation*}
Letting $d_{12}(m)$ denote the diameter of the sumset, the convexity index of the sumset is
\begin{equation}\label{LR_index}
    c(A_1+mA_2)\geq\max\left\{\frac{g_2(m)}{d_{12}(m)},\frac{g_3(m)}{d_{12}(m)}\right\}=:\max\{F_2(m),F_3(m)\}.
\end{equation}
We will show that $F_2(m)$ is non-increasing with $m$, and that $F_3(m)$ is non-decreasing with $m$. It will follow that the quantity on the right of (\ref{LR_index}) is minimized when $F_2(m)=F_3(m)$. We will start with $F_2(m)$. Write
\begin{equation}\label{LR-F_2}
    F_2(m)=\frac{1-m(1+r_2)\left(1+\frac{n_2}{2}\right)-r_1-n_1\left(\frac{1-r_1}{2}\right)}{1+m(1+r_2)\left(1+\frac{n_2}{2}\right)+r_1-n_1\left(\frac{1+r_1}{2}\right)}.
\end{equation}
Then
\begin{equation*}
    F_2^{\prime}(m)\approx(1+r_2)\left(1+\frac{n_2}{2}\right)(n_1-2)\leq0.
\end{equation*}
The last inequality follows from the fact given in Lemma \ref{LR-set} that $n_1\leq\frac{2r_1}{1+r_1}\leq 2$. For $F_3(m)$, write
\begin{equation*}
    F_3(m)=\frac{m(1-r_2)\left(1+\frac{n_2}{2}\right)-r_1+n_1\left(\frac{1+r_1}{2}\right)}{1+m(1+r_2)\left(1+\frac{n_2}{2}\right)+r_1-n_1\left(\frac{1+r_1}{2}\right)}.
\end{equation*}
Then
\begin{equation*}
    \begin{split}
        F_3^{\prime}(m)\approx&(1-r_2)\left(1+\frac{n_2}{2}\right)\left(1+r_1-n_1\left(\frac{1+r_1}{2}\right)\right)\\
        &+(1+r_2)\left(1+\frac{n_2}{2}\right)\left(r_1-n_1\left(\frac{1+r_1}{2}\right)\right)\geq0.
    \end{split}
\end{equation*}
We used the fact from Lemma \ref{LR-set} that $n_1(1+r_1)\leq 2r_1$. This verifies what we needed, so we minimize now. We have that $F_2(m)=F_3(m)$ if and only if $m=(1-n_1)(2+n_2)^{-1}$. Now substitute this value of $m$ into equation (\ref{LR-F_2}) to get
\begin{equation*}
    c(A_1+mA_2)\geq\frac{1-(1+r_2)(1-n_1)\left(\frac{1+\frac{n_2}{2}}{2+n_2}\right)-r_1-n_1\left(\frac{1-r_1}{2}\right)}{1+(1+r_2)(1-n_1)\left(\frac{1+\frac{n_2}{2}}{2+n_2}\right)+r_1-n_1\left(\frac{1+r_1}{2}\right)}:=\gamma(n_1,n_2).
\end{equation*}
First we notice that
\begin{equation*}
    \frac{1+\frac{n_2}{2}}{2+n_2}=\frac{1}{2}.
\end{equation*}
Then
\begin{equation*}
\begin{split}
    \gamma(n_1,n_2)&=\frac{1-\frac{1}{2}(1+r_2)(1-n_1)-r_1-n_1\left(\frac{1-r_1}{2}\right)}{1+\frac{1}{2}(1+r_2)(1-n_1)+r_1-n_1\left(\frac{1+r_1}{2}\right)}\\
    &=\frac{1-2r_1-r_2+n_1(r_1+r_2)}{3+2r_1+r_2-n_1(2+r_1+r_2)}\\
    &\geq\frac{1-2r_1-r_2}{3+2r_1+r_2}.
\end{split}
\end{equation*}
This proves the lower bound.
\begin{case}
    $\min\{g_3(m),g_4(m)\}=g_4(m)$.
\end{case}
Then we have
\begin{equation*}
    G(A_1+mA_2)=\max\{g_1(m),g_2(m),g_4(m)\}\geq\max\{g_1(m),g_4(m)\}.
\end{equation*} 
In this case the convexity index is given by
\begin{equation}\label{LR_F1_F4}
    c(A_1+mA_2)\geq\max\{F_1(m),F_4(m)\},
\end{equation}
where we recall that $F_j(m):=\frac{g_j(m)}{d_{12}(m)}$. We will show that $F_1(m)$ is non-decreasing in $m$, and that $F_4(m)$ is non-increasing with $m$. It will follow that the right side of (\ref{LR_F1_F4}) is minimized when $F_1(m)=F_4(m)$. For $F_1(m)$, write
\begin{equation*}
    F_1(m)=\frac{m(1-r_2)\left(1+\frac{n_2}{2}\right)-r_1-n_1\left(\frac{1-r_1}{2}\right)}{1+m(1+r_2)\left(1+\frac{n_2}{2}\right)+r_1-n_1\left(\frac{1+r_1}{2}\right)}.
\end{equation*}
Then
\begin{equation*}
\begin{split}
    F_1^{\prime}(m)\approx&(1-r_2)\left(1+\frac{n_2}{2}\right)\left(1+r_1-n_1\left(\frac{1+r_1}{2}\right)\right)\\
    &+(1+r_2)\left(1+\frac{n_2}{2}\right)\left(r_1+n_1\left(\frac{1-r_1}{2}\right)\right)\geq0.
\end{split}
\end{equation*}
Hence $F_1(m)$ is non-decreasing.
For $F_4(m)$, write
\begin{equation*}
    F_4(m)=\frac{1-r_1-m\left(r_2+n_2\left(\frac{1+r_2}{2}\right)\right)-n_1\left(\frac{1-r_1}{2}\right)}{1+m(1+r_2)\left(1+\frac{n_2}{2}\right)+r_1-n_1\left(\frac{1+r_1}{2}\right)}.
\end{equation*}
Then
\begin{equation*}
\begin{split}
    F_4^{\prime}(m)\approx&-\left(r_2+n_2\left(\frac{1+r_2}{2}\right)\right)\left(1+r_1-n_1\left(\frac{1+r_1}{2}\right)\right)\\
    &-(1+r_2)\left(1+\frac{n_2}{2}\right)\left(1-r_1-n_1\left(\frac{1-r_1}{2}\right)\right)\leq0.
\end{split}
\end{equation*}
Hence $F_4(m)$ is non-increasing. Now, $F_1(m)=F_4(m)$ if and only if $m=\frac{1}{1+n_2}$. Substituting this value of $m$ into $F_1(m)$ we get
\begin{equation*}
    c(A_1+mA_2)\geq\frac{(1-r_2)\left(\frac{1+\frac{n_2}{2}}{1+n_2}\right)-r_1-n_1\left(\frac{1-r_1}{2}\right)}{1+(1+r_2)\left(\frac{1+\frac{n_2}{2}}{1+n_2}\right)+r_1-n_1\left(\frac{1+r_1}{2}\right)}=:\gamma(n_1,n_2).
\end{equation*}
The function $\gamma(n_1,n_2)$ is decreasing in the variables $n_1$ and $n_2$. To see this, assume that $r_1\geq r_2$. Then
\begin{equation*}
\begin{split}
    D_1\gamma(n_1,n_2)&\approx-\left(\frac{1+r_1}{2}\right)+\left(\frac{1+\frac{n_2}{2}}{1+n_2}\right)(r_1-r_2)\\
    &\leq^{*}-\left(\frac{1+r_1}{2}\right)+(r_1-r_2)\\
    &=\frac{1}{2}(-1+r_1-2r_2)\leq0.
\end{split}
\end{equation*}
The $\leq^{*}$ followed from the fact that the quantity $\frac{1+\frac{n_2}{2}}{1+n_2}$ is decreasing in $n_2$, and so is maximized when $n_2=0$. We can see by observing the above calculation that the above quantity is non-positive in the case that $r_1\leq r_2$. For the variable $n_2$ observe that
\begin{equation*}
    D_2\gamma(n_1,n_2)\approx-\frac{1-r_2}{2(1+n_2)^2}\left[1+r_1-n_1\left(\frac{1+r_1}{2}\right)\right]-\frac{1+r_2}{2(1+n_2)^2}\left[r_1+n_1\left(\frac{1-r_1}{2}\right)\right]\leq0.
\end{equation*}
Then, since $n_1\leq\frac{2r_1}{1+r_1}$ and $n_2\leq\frac{2r_2}{1-r_2}$ the lower bound for $\gamma(n_1,n_2)$ is 
\begin{equation}\label{LR_gamma}
    \gamma(n_1,n_2)\geq\gamma\left(\frac{2r_1}{1+r_1},\frac{2r_2}{1-r_2}\right)=\frac{1-r_1-r_2-3r_1r_2}{2+2r_1+2r_2+2r_1r_2}.
\end{equation}
Now, for at least one $j\in[2]$, $r_j\leq\frac{1}{3}$. If not, then for any $\sigma\in S_2$,
\begin{equation*}
    1-r_{\sigma(1)}-2r_{\sigma(2)}<1-\frac{1}{3}-\frac{2}{3}=0.
\end{equation*}
This is a contradiction. For each $j\in[2]$, we must have $r_j\leq \frac{1}{2}$. To see this, observe that for each $\sigma\in S_2$,
\begin{equation*}
    2r_{\sigma(2)}\leq r_{\sigma(1)}+2r_{\sigma(2)}<1.
\end{equation*}
Then $r_{\sigma(2)}\leq\frac{1}{2}$. Wlog, suppose that $r_1=\min\{r_1,r_2\}$. Then $r_1\leq\frac{1}{3}$, and therefore $-3r_1r_2\geq -r_2$. Also,
\begin{equation*}
    r_1+2r_1r_2\leq\frac{1}{2}+\frac{2}{4}=1.
\end{equation*}
Putting this all together, we find using (\ref{LR_gamma}) that
\begin{equation*}
    \gamma(n_1,n_2)\geq \frac{1-r_1-2r_2}{3+r_1+2r_2}.
\end{equation*}
This proves the bound we need, completing the proof of the lemma.
\end{proof}

\begin{lemma}\label{R-set,L-set}
    Let $A_1$ be an R-set, and $A_2$ be an L-set such that (\ref{positive_assumption}) holds. Then for any $m\in(0,1]$, (\ref{lower_bound_inequality}) holds.
\end{lemma}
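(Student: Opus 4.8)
The plan is to deduce this lemma from Lemma~\ref{L-set,R-set} by a reflection symmetry, thereby avoiding a verbatim repetition of the entire gap-candidate analysis. The whole argument rests on the fact that in one dimension the formula $c(A)=G(A)/\textup{diam}(A)$ makes the Schneider non-convexity index invariant under the reflection $x\mapsto -x$, under translations, and under positive dilations. Reflection also distributes over Minkowski sums, so that, writing $A_i^{*}$ for $-A_i$ translated back to standard position $[0,\cdot]\cup[\cdot,\cdot]$, one gets $c(A_1+mA_2)=c(A_1^{*}+mA_2^{*})$. The point is that reflection turns the longer interval into the left interval: the reflection of an R-set is a set whose left interval is the longer one (an $L$-type set), and the reflection of an $L$-set is an $R$-type set. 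These reflected sets are not yet in the normalized form of Lemma~\ref{LR-set}, because their right intervals no longer begin at $1$, and correcting this by a dilation is where the scaling parameter $m$ must be tracked.

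First I would record the normalization. Using the parametrizations of Lemma~\ref{LR-set}, if $A_1$ is an R-set with parameter $n_1$ then its right interval is longer than its left interval by exactly $n_1$; after reflection the right interval of $A_1^{*}$ therefore begins at $1+n_1$, and dilating by $\lambda_1:=(1+n_1)^{-1}$ produces a genuine normalized $L$-set $B_1:=\lambda_1 A_1^{*}$ with $c(B_1)=c(A_1)$, hence the same value $r_1$. Similarly, if $A_2$ is an $L$-set with parameter $n_2$, then after reflection the right interval of $A_2^{*}$ begins at $1-n_2$, and dilating by $\lambda_2:=(1-n_2)^{-1}$ yields a normalized R-set $B_2:=\lambda_2 A_2^{*}$ with the same value $r_2$. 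Factoring $\lambda_1^{-1}$ out of the sum gives $A_1^{*}+mA_2^{*}=\lambda_1^{-1}(B_1+m'B_2)$ with $m':=m\lambda_1/\lambda_2=m(1-n_2)/(1+n_1)$, so by dilation invariance $c(A_1+mA_2)=c(B_1+m'B_2)$.

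The step I expect to require the most care is verifying that the reduced scale $m'$ lands in the admissible range $(0,1]$ demanded by the hypotheses of Lemma~\ref{L-set,R-set}; if $m'$ could exceed $1$ the reduction would be useless. Here the positivity assumption~(\ref{positive_assumption}) does the work: from $1-r_1-2r_2>0$ we get $r_2<\tfrac12$, whence the $L$-set constraint $n_2\le 2r_2/(1+r_2)$ forces $0<n_2<1$. Since also $n_1>0$, we have $(1-n_2)/(1+n_1)<1$ and therefore $0<m'\le m\le 1$, exactly as needed. (The degenerate balanced sub-cases $n_1=0$ or $n_2=0$ only make $m'$ smaller, so they cause no trouble.)

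Finally I would close the loop: $B_1$ is a normalized $L$-set, $B_2$ a normalized R-set, $m'\in(0,1]$, and the indices satisfy $r(c(B_1))=r_1$, $r(c(B_2))=r_2$. Because both the hypothesis~(\ref{positive_assumption}) and the target bound~(\ref{lower_bound_inequality}) depend only on $r_1$ and $r_2$, which are unchanged, Lemma~\ref{L-set,R-set} applies to $c(B_1+m'B_2)$ and delivers precisely the right-hand side of~(\ref{lower_bound_inequality}) for $c(A_1+mA_2)$. As a fallback, if one prefers not to invoke the symmetry, the gap-candidate computation of Lemma~\ref{L-set,R-set} can simply be rerun with $A_1$ an R-set and $mA_2$ a scaled $L$-set, reproducing an analogue of Claim~\ref{biggest_gap_formula} and the same two monotonicity cases; this is routine but tedious, and the reflection route makes it unnecessary.
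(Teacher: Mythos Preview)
Your argument is correct and takes a genuinely different route from the paper. The paper simply reruns the entire gap-candidate analysis from scratch: it writes out $A_1=I_1\cup J_1$ and $mA_2=I_2\cup J_2$ explicitly using Lemma~\ref{LR-set}, proves an analogue of Claim~\ref{biggest_gap_formula} for this configuration, and then splits into the two cases $\min\{g_3,g_4\}=g_3$ and $\min\{g_3,g_4\}=g_4$, handling each by the same monotonicity-in-$m$ technique. Your reflection reduction is cleaner: since $c$ is invariant under $x\mapsto -x$, under translation, and under positive dilation, and since these operations distribute over Minkowski sums, the R--L configuration transforms into a normalized L--R configuration to which Lemma~\ref{L-set,R-set} applies directly. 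The only nontrivial step is checking that the rescaled parameter $m'=m(1-n_2)/(1+n_1)$ stays in $(0,1]$, and you handle this correctly via the constraint $n_2\le 2r_2/(1+r_2)<1$ forced by~(\ref{positive_assumption}). One small slip: your parenthetical remark that the balanced sub-cases ``only make $m'$ smaller'' is not quite right (setting $n_1=0$ or $n_2=0$ actually increases $m'$ relative to the strict case with the same other parameter), but the conclusion you need, namely $m'\le 1$, still holds in those cases and the lemma as stated excludes them anyway. Your approach buys brevity and exposes the underlying symmetry; the paper's approach is self-contained and avoids tracking the renormalization, at the cost of repeating four pages of computation. In fact your observation could be pushed further: the same reflection swaps the L--L and R--R configurations, so Lemma~\ref{R-set,R-set} could likewise be deduced from Lemma~\ref{L-set,L-set}.
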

\begin{proof}
    By Lemma \ref{LR-set} we can write
    \begin{equation*}
    \begin{split}
        A_1&=\left[0,r_1-n_1\left(\frac{1-r_1}{2}\right)\right]\cup\left[1,1+r_1+n_1\left(\frac{1+r_1}{2}\right)\right]=:I_1\cup J_1,\\
        mA_2&=\left[0,mr_2+mn_2\left(\frac{1-r_2}{2}\right)\right]\cup\left[m,m+mr_2-mn_2\left(\frac{1+r_2}{2}\right)\right]=:I_2\cup J_2.
    \end{split}
    \end{equation*}
    Using Minkowski addition we get
    \begin{equation*}
        A_1+mA_2=(I_1+I_2)\cup(I_1+J_2)\cup(J_1+I_2)\cup(J_1+J_2).
    \end{equation*}
    The possible gaps are $g_j$, $1\leq j\leq 4$, just as defined in the proof of Lemma \ref{L-set,R-set}. In fact, they can be written explicitly as
    \begin{equation*}
        \begin{split}
            g_1(m)&=m-r_1-mr_2+n_1\left(\frac{1-r_1}{2}\right)-mn_2\left(\frac{1-r_2}{2}\right),\\
            g_2(m)&=1-m-r_1-mr_2+n_1\left(\frac{1-r_1}{2}\right)+mn_2\left(\frac{1+r_2}{2}\right),\\
            g_3(m)&=m-r_1-mr_2-n_1\left(\frac{1+r_1}{2}\right)-mn_2\left(\frac{1-r_2}{2}\right),\\
            g_4(m)&=1-r_1-mr_2+n_1\left(\frac{1-r_1}{2}\right)+mn_2\left(\frac{1+r_2}{2}\right).
        \end{split}
    \end{equation*}

    \begin{claim}\label{biggest_gap_RL}
        The largest gap of the sumset is
        \begin{equation*}
            G(A_1+mA_2)=\max\{g_1(m),g_2(m),\min\{g_3(m),g_4(m)\},0\}.
        \end{equation*}
    \end{claim}

    \begin{proof}[Proof of Claim \ref{biggest_gap_RL}]
        This will be proved just as in Claim \ref{biggest_gap_formula}. By careful observation, the entire proof is identical except we need to verify that $R(I_1+I_2)<1$. We can see this by calculating
        \begin{equation*}
            \begin{split}
                R(I_1+I_2)&=r_1+mr_2-n_1\left(\frac{1-r_1}{2}\right)+mn_2\left(\frac{1-r_2}{2}\right)\\
                &\leq r_1+2r_2<1.
            \end{split}
        \end{equation*}
        This proves the claim.
    \end{proof}
    
    Just as before, we will continue the proof by omitting the $0$ in Claim \ref{biggest_gap_RL}. There are two cases to consider.
    \begin{case}
        $\min\{g_3(m),g_4(m)\}=g_3(m)$.
    \end{case}
    Then 
    \begin{equation}\label{RL_index}
        c(A_1+mA_2)=\frac{\max\{g_1(m),g_2(m),g_3(m)\}}{d_{12}(m)}\geq\frac{\max\{g_1(m),g_2(m)\}}{d_{12}(m)}=:\max\{F_1(m),F_2(m)\}.
    \end{equation}
    It will be shown that $F_1(m)$ is non-decreasing with $m$, and that $F_2(m)$ is non-increasing with $m$. Then it will follow that the right side of (\ref{RL_index}) is minimized when $F_1(m)=F_2(m)$. For $F_1(m)$, write
    \begin{equation*}
        F_1(m)=\frac{m(1-r_2)\left(1-\frac{n_2}{2}\right)-r_1+n_1\left(\frac{1-r_1}{2}\right)}{1+m(1+r_2)\left(1-\frac{n_2}{2}\right)+r_1+n_1\left(\frac{1+r_1}{2}\right)}.
    \end{equation*}
    Then
    \begin{equation*}
        \begin{split}
         F_1^{\prime}(m)\approx&(1-r_2)\left(1-\frac{n_2}{2}\right)\left(1+r_1+n_1\left(\frac{1+r_1}{2}\right)\right)\\
        &+(1+r_2)\left(1-\frac{n_2}{2}\right)\left(r_1-n_1\left(\frac{1-r_1}{2}\right)\right)\geq0,
        \end{split}
    \end{equation*}
    which verifies that $F_1(m)$ is non-decreasing. For $F_2(m)$, write
    \begin{equation*}
        F_2(m)=\frac{1-m(1+r_2)\left(1-\frac{n_2}{2}\right)-r_1+n_1\left(\frac{1-r_1}{2}\right)}{1+m(1+r_2)\left(1-\frac{n_2}{2}\right)+r_1+n_1\left(\frac{1+r_1}{2}\right)}.
    \end{equation*}
    Then
    \begin{equation*}
        \begin{split}
            F_2^{\prime}(m)\approx&-(1+r_2)\left(1-\frac{n_2}{2}\right)\left(1+r_1+n_1\left(\frac{1+r_1}{2}\right)\right)\\
            &-(1+r_2)\left(1-\frac{n_2}{2}\right)\left(1-r_1+n_1\left(\frac{1-r_1}{2}\right)\right)\leq0,
        \end{split}
    \end{equation*}
    which verifies that $F_2(m)$ is non-increasing. Now, $F_1(m)=F_2(m)$ if and only if $m=\frac{1}{2-n_2}$. Then
    \begin{equation*}
        \begin{split}
             \max\{F_1(m),F_2(m)\}&\geq \frac{1-(1+r_2)\left(\frac{1-\frac{n_2}{2}}{2-n_2}\right)-r_1+n_1\left(\frac{1-r_1}{2}\right)}{1+(1+r_2)\left(\frac{1-\frac{n_2}{2}}{2-n_2}\right)+r_1+n_1\left(\frac{1+r_1}{2}\right)}\\
             &=\frac{1-\frac{1}{2}(1+r_2)-r_1+n_1\left(\frac{1-r_1}{2}\right)}{1+\frac{1}{2}(1+r_2)+r_1+n_1\left(\frac{1+r_1}{2}\right)}\\
             &=\frac{1-2r_1-r_2+n_1(1-r_1)}{3+2r_1+r_2+n_1(1+r_1)}=:\gamma(n_1).
        \end{split}
    \end{equation*}
    Now, $\gamma^{\prime}(n_1)\approx2+2r_2>0$, so that $\gamma$ is increasing in $n_1$. Then
    \begin{equation*}
        \gamma(n_1)\geq\gamma(0)=\frac{1-2r_1-r_2}{3+2r_1+r_2}.
    \end{equation*}
    This proves the lower bound.
    \begin{case}
        $\min\{g_3(m),g_4(m)\}=g_4(m)$.
    \end{case}
    Then
    \begin{equation*}
        c(A_1+mA_2)=\frac{\max\{g_1(m),g_2(m),g_4(m)\}}{d_{12}(m)}\geq\frac{\max\{g_1(m),g_4(m)\}}{d_{12}(m)}=:\max\{F_1(m),F_4(m)\}.
    \end{equation*}
    We already know that $F_1(m)$ is non-decreasing. We will show that $F_4(m)$ is non-increasing. Then we minimize by finding where $F_1(m)=F_4(m)$. For $F_4(m)$ write
    \begin{equation*}
        F_4(m)=\frac{1-r_1-m\left(r_2-n_2\left(\frac{1+r_2}{2}\right)\right)+n_1\left(\frac{1-r_1}{2}\right)}{1+m(1+r_2)\left(1-\frac{n_2}{2}\right)+r_1+n_1\left(\frac{1+r_1}{2}\right)}.
    \end{equation*}
    Then 
    \begin{equation*}
        \begin{split}
             F_4^{\prime}(m)\approx& -\left(r_2-n_2\left(\frac{1+r_2}{2}\right)\right)\left(1+r_1+n_1\left(\frac{1+r_1}{2}\right)\right)\\
             &-(1+r_2)\left(1-\frac{n_2}{2}\right)\left(1-r_1+n_1\left(\frac{1-r_1}{2}\right)\right)\leq0,
        \end{split}
    \end{equation*}
    which verifies that $F_4(m)$ is non-increasing. Now, $F_1(m)=F_4(m)$ if and only if $m=\frac{1}{1-n_2}$. Then substituting $m=\frac{1}{1-n_2}$ into $F_1(m)$, we get
    \begin{equation*}
        \max\{F_1(m),F_4(m)\}\geq \frac{(1-r_2)\left(\frac{1-\frac{n_2}{2}}{1-n_2}\right)-r_1+n_1\left(\frac{1-r_1}{2}\right)}{1+(1+r_2)\left(\frac{1-\frac{n_2}{2}}{1-n_2}\right)+r_1+n_1\left(\frac{1+r_1}{2}\right)}=:\gamma(n_1,n_2).
    \end{equation*}
    Now, suppose that $r_2<r_1$. Since $\left(\frac{1-\frac{n_2}{2}}{1-n_2}\right)$ is increasing in $n_2$ and $n_2\leq \frac{2r_2}{1+r_2}$, we have
    \begin{equation*}
        \left(\frac{1-\frac{n_2}{2}}{1-n_2}\right)\leq\frac{1}{1-r_2}.
    \end{equation*}
    It follows that 
    \begin{equation*}
    \begin{split}
        D_1\gamma(n_1,n_2)&\approx \frac{1}{2}+\frac{r_1}{2}+\left(\frac{1-\frac{n_2}{2}}{1-n_2}\right)(r_2-r_1)\\
        &\geq \frac{1}{2}+\frac{r_1}{2}+\frac{r_2-r_1}{1-r_2}\\
        &=\frac{1-r_1+r_2-r_1r_2}{2(1-r_2)}\geq0.
    \end{split}
    \end{equation*}
    In the case the $r_2\geq r_1$, one can verify by inspection that the above quantity is non-negative. It follows that $\gamma(n_1,n_2)\geq \gamma(0,n_2)$. We also have that
    \begin{equation*}
        D_2\gamma(0,n_2)\approx \frac{1+2r_1-r_2}{2(1-n_2)^{2}}\geq0.
    \end{equation*}
    Then
    \begin{equation*}
        \gamma(n_1,n_2)\geq\gamma(0,0)=\frac{1-r_1-r_2}{2+r_1+r_2}\geq\frac{1-2r_1-r_2}{3+2r_1+r_2},
    \end{equation*}
    which completes the proof of the lemma.
\end{proof}
\begin{lemma}\label{L-set,L-set}
    Let $A_1$ and $A_2$ be L-sets such that (\ref{positive_assumption}) holds. Then for any $m\in(0,1]$, (\ref{lower_bound_inequality}) holds.
\end{lemma}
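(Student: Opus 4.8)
The plan is to follow the template of the two preceding lemmas, exploiting the fact that when both $A_1$ and $A_2$ are $L$-sets the computation differs from the earlier cases only in the signs attached to the $n_1$- and $n_2$-terms. First I would apply Lemma \ref{LR-set} to write $A_1=I_1\cup J_1$ and $mA_2=I_2\cup J_2$ using the explicit $L$-set endpoints, take the four-piece Minkowski sum $A_1+mA_2=(I_1+I_2)\cup(I_1+J_2)\cup(J_1+I_2)\cup(J_1+J_2)$, and write out the gap candidates $g_1(m),\dots,g_4(m)$. The next step is the analogue of Claim \ref{biggest_gap_formula}: that $G(A_1+mA_2)=\max\{g_1(m),g_2(m),\min\{g_3(m),g_4(m)\},0\}$. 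Its proof is the same as before except for one genuinely new point. To know that $I_1+I_2$ lies to the left of $J_1+I_2$ I must check $R(I_1+I_2)<1$, but since both sets are $L$-sets their long left intervals add and the crude estimate only gives $R(I_1+I_2)\le 2r_1+2r_2$, which need not be below $1$. Instead I would use the sharper bounds $R(I_1)\le\frac{2r_1}{1+r_1}$ and $R(I_2)\le\frac{2r_2}{1+r_2}$ and reduce the inequality to $r_1+r_2+3r_1r_2<1$; assumption (\ref{positive_assumption}) forces $r_1+r_2<\tfrac{2}{3}$, and taking (without loss) $r_1\ge r_2$ so that $r_2<\tfrac{1}{3}$ and $r_1<\frac{1-r_2}{2}$, one gets $r_1+r_2+3r_1r_2<\tfrac{1}{2}(1+4r_2-3r_2^2)<1$.

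With the gap formula established I would split into the two usual cases according to whether $\min\{g_3(m),g_4(m)\}$ equals $g_3(m)$ or $g_4(m)$. Setting $F_j(m):=g_j(m)/d_{12}(m)$, each $g_j$ and $d_{12}$ is affine in $m$, so every $F_j'(m)$ has constant sign; a routine check gives that $F_1$ and $F_3$ are non-decreasing while $F_2$ and $F_4$ are non-increasing. In Case 1 ($\min=g_3$) I would use $c(A_1+mA_2)\ge\max\{F_2(m),F_3(m)\}$ and minimize over $m$. The crossing $F_2=F_3$ occurs at $m=\frac{1-n_1}{2-n_2}$, where $m(1-\tfrac{n_2}{2})=\frac{1-n_1}{2}$, and the value collapses to $\gamma(n_1)=\frac{1-2r_1-r_2+n_1(r_1+r_2)}{3+2r_1+r_2-n_1(2+r_1+r_2)}$ — exactly the function from Case 1 of Lemma \ref{L-set,R-set}. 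This is increasing in $n_1$, so it is minimized at $n_1=0$, giving $\gamma\ge\frac{1-2r_1-r_2}{3+2r_1+r_2}$, which in turn is at least $\min_{\sigma\in S_2}\frac{1-r_{\sigma(1)}-2r_{\sigma(2)}}{3+r_{\sigma(1)}+2r_{\sigma(2)}}$; this settles Case 1.

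Case 2 ($\min=g_4$) is the main obstacle, and it is where the $L$-set hypothesis on $A_1$ forces the argument to depart from Lemma \ref{R-set,L-set}. The only pairing of a non-decreasing with a non-increasing $F_j$ that yields a usable bound is $\max\{F_1(m),F_4(m)\}$; the alternative $\max\{F_1(m),F_2(m)\}$ leads to a crossing value that dips below the target. The crossing $F_1=F_4$ sits at $m=\frac{1}{1-n_2}$ (this may exceed $1$, but since $\max\{F_1,F_4\}$ is the upper envelope of an increasing and a decreasing function its global minimum over all $m>0$ occurs at the crossing, which is a valid lower bound for its values on $(0,1]$), and it produces $\gamma(n_1,n_2)=\frac{(1-r_2)K-r_1-n_1(\frac{1-r_1}{2})}{1+r_1-n_1(\frac{1+r_1}{2})+(1+r_2)K}$ with $K=\frac{1-n_2/2}{1-n_2}$. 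The decisive change is that the $n_1$-terms now enter with the opposite sign from Lemma \ref{R-set,L-set}: I expect $D_1\gamma\le 0$ and $D_2\gamma\ge 0$, so that the minimum is attained not at $(0,0)$ but at $n_1=\frac{2r_1}{1+r_1}$ (its largest admissible value) and $n_2=0$. Evaluating there gives the less tidy expression $\gamma\left(\frac{2r_1}{1+r_1},0\right)=\frac{1-r_1-r_2-r_1r_2}{(1+r_1)(2+r_2)}$, and the lemma follows once I verify $\frac{1-r_1-r_2-r_1r_2}{(1+r_1)(2+r_2)}\ge\frac{1-2r_1-r_2}{3+2r_1+r_2}$. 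Cross-multiplying reduces this to $1+r_1-r_2+2r_1^2-3r_1r_2\ge 0$, which I would dispatch using $r_2<\tfrac{1}{2}$ to bound the left side below by $\tfrac{1}{2}(1-r_1+4r_1^2)$, a quantity with negative discriminant and hence positive. The sign bookkeeping throughout and this final non-symmetric polynomial comparison are the only places demanding genuine care; the remainder is a direct adaptation of the earlier cases.
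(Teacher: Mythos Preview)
Your proposal is correct and follows essentially the same approach as the paper: the same gap-formula claim (with the same reduction of $R(I_1+I_2)<1$ to $r_1+r_2+3r_1r_2<1$), the same two cases, the same crossing points $m=\frac{1-n_1}{2-n_2}$ and $m=\frac{1}{1-n_2}$, and the same minimization at $n_1=\frac{2r_1}{1+r_1}$, $n_2=0$ in Case~2 yielding $\frac{1-r_1-r_2-r_1r_2}{2+2r_1+r_2+r_1r_2}$. Your explicit cross-multiplication argument for the final inequality and your remark on the crossing point possibly exceeding $1$ actually fill in steps the paper leaves implicit.
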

\begin{proof}
    Using Lemma \ref{LR-set} we may write
    \begin{equation*}
        \begin{split}
            A_1&=\left[0,r_1+n_1\left(\frac{1-r_1}{2}\right)\right]\cup\left[1,1+r_1-n_1\left(\frac{1+r_1}{2}\right)\right]=:I_1\cup J_1,\\
            mA_2&=\left[0,mr_2+mn_2\left(\frac{1-r_2}{2}\right)\right]\cup\left[m,m+mr_2-mn_2\left(\frac{1+r_2}{2}\right)\right]=:I_2\cup J_2.
        \end{split}
    \end{equation*}
    Adding the sets together we get
    \begin{equation*}
        A_1+mA_2=(I_1+I_2)\cup(I_1+J_2)\cup(J_1+I_2)\cup(J_1+J_2).
    \end{equation*}
    The gaps $g_j$, $1\leq j\leq 4$ are defined the same as in Lemma \ref{L-set,R-set}. These gaps can be written explicitly as
    \begin{equation*}
        \begin{split}
            g_1(m)&=m-r_1-mr_2-n_1\left(\frac{1-r_1}{2}\right)-mn_2\left(\frac{1-r_2}{2}\right),\\
            g_2(m)&=1-m-r_1-mr_2-n_1\left(\frac{1-r_1}{2}\right)+mn_2\left(\frac{1+r_2}{2}\right),\\
            g_3(m)&=m-r_1-mr_2+n_1\left(\frac{1+r_1}{2}\right)-mn_2\left(\frac{1-r_2}{2}\right),\\
            g_4(m)&=1-r_1-mr_2-n_1\left(\frac{1-r_1}{2}\right)+mn_2\left(\frac{1+r_2}{2}\right).
        \end{split}
    \end{equation*}

    \begin{claim}\label{biggest_gap_LL}
        The largest gap of the sumset is given by
        \begin{equation*}
            G(A_1+mA_2)=\max\{g_1(m),g_2(m),\min\{g_3(m),g_4(m)\},0\}.
        \end{equation*}
    \end{claim}

    \begin{proof}[Proof of Claim \ref{biggest_gap_LL}]
        All we need to do is to check the same conditions that were verified in Claim \ref{biggest_gap_RL}. We have
        \begin{equation*}
            \begin{split}
                R(I_1+I_2)&=r_1+mr_2+n_1\left(\frac{1-r_2}{2}\right)+mn_2\left(\frac{1-r_2}{2}\right)\\
                &\leq r_1+r_2+\frac{r_1(1-r_1)}{1+r_1}+\frac{r_2(1-r_2)}{1+r_2}\\
                &=\frac{2r_1}{1+r_1}+\frac{2r_2}{1+r_2}\\
                &=\frac{2r_1+2r_2+4r_1r_2}{1+r_1+r_2+r_1r_2}=:P(r_1,r_2).
            \end{split}
        \end{equation*}
        Now, since $\min(r_1,r_2)\leq\frac{1}{3}$ we have
        \begin{equation*}
            \begin{split}
                P(r_1,r_2)-1&\approx (2r_1+2r_2+4r_1r_2)-(1+r_1+r_2+r_1r_2)\\
                &=r_1+r_2+3r_1r_2-1\\
                &\leq r_1+r_2+\max(r_1,r_2)-1<0.
            \end{split}
        \end{equation*}
        We have verified that $R(I_1+I_2)<1$, which proves the claim.
    \end{proof}
    
    Omitting the $0$ in Claim \ref{biggest_gap_LL}, we only need the following two cases.
    \begin{case}
        $\min\{g_3(m),g_4(m)\}=g_3(m)$.
    \end{case}
    Then
    \begin{equation*}
        c(A_1+mA_2)=\frac{\max\{g_1(m),g_2(m),g_3(m)\}}{d_{12}(m)}\geq\frac{\max\{g_2(m),g_3(m)\}}{d_{12}(m)}=:\max\{F_2(m),F_3(m)\}.
    \end{equation*}
    It will be shown that $F_2(m)$ is non-increasing, and that $F_3(m)$ is non-decreasing. Then to minimize, we will find where $F_2(m)=F_3(m)$. Now,
    \begin{equation*}
        F_2(m)=\frac{1-m(1+r_2)\left(1-\frac{n_2}{2}\right)-r_1-n_1\left(\frac{1-r_1}{2}\right)}{1+m(1+r_2)\left(1-\frac{n_2}{2}\right)+r_1-n_1\left(\frac{1+r_1}{2}\right)}.
    \end{equation*}
    Then
    \begin{equation*}
        F_2^{\prime}(m)\approx -(1+r_2)\left(1-\frac{n_2}{2}\right)(2-n_1)\leq0,
    \end{equation*}
    which verifies that $F_2(m)$ is non-increasing. For $F_3(m)$ we have
    \begin{equation*}
        F_3(m)=\frac{m(1-r_2)\left(1-\frac{n_2}{2}\right)-r_1+n_1\left(\frac{1+r_1}{2}\right)}{1+m(1+r_2)\left(1-\frac{n_2}{2}\right)+r_1-n_1\left(\frac{1+r_1}{2}\right)}.
    \end{equation*}
    Then
    \begin{equation*}
        \begin{split}
            F_3^{\prime}(m)\approx& (1-r_2)\left(1-\frac{n_2}{2}\right)\left(1+r_1-n_1\left(\frac{1+r_1}{2}\right)\right)\\
            &+(1+r_2)\left(1-\frac{n_2}{2}\right)\left(r_1-n_1\left(\frac{1+r_1}{2}\right)\right)\geq0,
        \end{split}
    \end{equation*}
    which verifies that $F_3(m)$ is non-decreasing. Now, $F_2(m)=F_3(m)$ if and only if $m=\frac{1-n_1}{2-n_2}$. Then substituting $m=\frac{1-n_1}{2-n_2}$ into $F_3(m)$ we get
    \begin{equation*}
        \begin{split}
            \max\{F_2(m),F_3(m)\}&\geq\frac{(1-r_2)(1-n_1)\left(\frac{1-\frac{n_2}{2}}{2-n_2}\right)-r_1+n_1\left(\frac{1+r_1}{2}\right)}{1+(1+r_2)(1-n_1)\left(\frac{1-\frac{n_2}{2}}{2-n_2}\right)+r_1-n_1\left(\frac{1+r_1}{2}\right)}\\
            &=\frac{\frac{1}{2}(1-r_2)(1-n_1)-r_1+n_1\left(\frac{1+r_1}{2}\right)}{1+\frac{1}{2}(1+r_2)(1-n_1)+r_1-n_1\left(\frac{1+r_1}{2}\right)}\\
            &=\frac{1-2r_1-r_2+n_1(r_1+r_2)}{3+2r_1+r_2-n_1(2+r_1+r_2)}\\
            &\geq\frac{1-2r_1-r_2}{3+2r_1+r_2}.
        \end{split}
    \end{equation*}
    This proves the lower bound we need.
    \begin{case}
        $\min\{g_3(m),g_4(m)\}=g_4(m)$.
    \end{case}
    Then 
    \begin{equation*}
        c(A_1+mA_2)=\max\{F_1(m),F_2(m),F_4(m)\}\geq\max\{F_1(m),F_4(m)\}.
    \end{equation*}
    We will show that $F_1(m)$ is non-decreasing, and that $F_4(m)$ is non-increasing. For $F_1(m)$, write
    \begin{equation*}
        F_1(m)=\frac{m(1-r_2)\left(1-\frac{n_2}{2}\right)-r_1-n_1\left(\frac{1-r_1}{2}\right)}{1+m(1+r_2)\left(1-\frac{n_2}{2}\right)+r_1-n_1\left(\frac{1+r_1}{2}\right)}.
    \end{equation*}
    Then
    \begin{equation*}
        \begin{split}
            F_1^{\prime}(m)\approx& (1-r_2)\left(1-\frac{n_2}{2}\right)\left(1+r_1-n_1\left(\frac{1+r_1}{2}\right)\right)\\
            &+(1+r_2)\left(1-\frac{n_2}{2}\right)\left(r_1+n_1\left(\frac{1-r_1}{2}\right)\right)\geq0,
        \end{split}
    \end{equation*}
    which verifies that $F_1(m)$ is non-decreasing. For $F_4(m)$, write 
    \begin{equation*}
        F_4(m)=\frac{1-r_1-m\left(r_2-n_2\left(\frac{1+r_2}{2}\right)\right)-n_1\left(\frac{1-r_1}{2}\right)}{1+m(1+r_2)\left(1-\frac{n_2}{2}\right)+r_1-n_1\left(\frac{1+r_1}{2}\right)}.
    \end{equation*}
    Then
    \begin{equation*}
        \begin{split}
            F_4^{\prime}(m)\approx& -\left(r_2-n_2\left(\frac{1+r_2}{2}\right)\right)\left(1+r_1-n_1\left(\frac{1+r_1}{2}\right)\right)\\
             &-(1+r_2)\left(1-\frac{n_2}{2}\right)\left(1-r_1-n_1\left(\frac{1-r_1}{2}\right)\right)\leq0,
        \end{split}
    \end{equation*}
    which verifies that $F_4(m)$ is non-increasing. Now, $F_1(m)=F_4(m)$ if and only if $m=\frac{1}{1-n_2}$. Then substituting $m=\frac{1}{1-n_2}$ into $F_1(m)$ we get
    \begin{equation*}
        \max\{F_1(m),F_4(m)\}\geq\frac{(1-r_2)\left(\frac{1-\frac{n_2}{2}}{1-n_2}\right)-r_1-n_1\left(\frac{1-r_1}{2}\right)}{1+(1+r_2)\left(\frac{1-\frac{n_2}{2}}{1-n_2}\right)+r_1-n_1\left(\frac{1+r_1}{2}\right)}=:\gamma(n_1,n_2).
    \end{equation*}
    Suppose that $r_1>r_2$. Since $\left(\frac{1-\frac{n_2}{2}}{1-n_2}\right)$ is increasing, and $n_2\leq\frac{2r_2}{1+r_2}$ we have 
    \begin{equation*}
        \left(\frac{1-\frac{n_2}{2}}{1-n_2}\right)\leq \frac{1}{1-r_2}.
    \end{equation*}
    Then
    \begin{equation*}
    \begin{split}
        D_1\gamma(n_1,n_2)&\approx -\frac{1}{2}-\frac{r_1}{2}+\left(\frac{1-\frac{n_2}{2}}{1-n_2}\right)(r_1-r_2)\\
        &\leq -\frac{1}{2}-\frac{r_1}{2}+\left(\frac{r_1-r_2}{1-r_2}\right)\\
        &=\frac{-1+r_1-r_2+r_1r_2}{2(1-r_2)}\leq0.
    \end{split}
    \end{equation*}
    If $r_1\leq r_2$, then it can be seen that the above quantity is not positive. Then, since $n_1\leq\frac{2r_1}{1+r_1}$ we have
    \begin{equation*}
        \gamma(n_1,n_2)\geq\gamma\left(\frac{2r_1}{1+r_1},n_2\right)=\frac{(1-r_2)\left(\frac{1-\frac{n_2}{2}}{1-n_2}\right)-\frac{2r_1}{1+r_1}}{1+(1+r_2)\left(\frac{1-\frac{n_2}{2}}{1-n_2}\right)}.
    \end{equation*}
    Now,
    \begin{equation*}
        D_2\gamma\left(\frac{2r_1}{1+r_1},n_2\right)\approx \frac{1}{2(1-n_2)^2}\left[1-r_2+\frac{2r_1(1+r_2)}{1+r_1}\right]\geq0.
    \end{equation*}
    Then
    \begin{equation*}
        \gamma(n_1,n_2)\geq \gamma\left(\frac{2r_1}{1+r_1},0\right)=\frac{1-r_1-r_2-r_1r_2}{2+2r_1+r_2+r_1r_2}\geq\frac{1-2r_1-r_2}{3+2r_1+r_2}.
    \end{equation*}
    This proves the lemma.
\end{proof}
\begin{lemma}\label{R-set,R-set}
     Let $A_1$ and $A_2$ be R-sets such that (\ref{positive_assumption}) holds. Then for any $m\in(0,1]$, (\ref{lower_bound_inequality}) holds.
\end{lemma}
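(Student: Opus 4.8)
The plan is to avoid repeating the gap analysis of Lemmas \ref{L-set,R-set}--\ref{L-set,L-set} and instead reduce the present case to the already established L-set/L-set case (Lemma \ref{L-set,L-set}) by reflecting through the origin. The key observation is that $c$ is invariant under every invertible affine map of $\mathbb{R}$, since it is the ratio $G/\textup{diam}$ of two affinely covariant quantities; in particular $c(-A)=c(A)$ and $c(tA)=c(A)$ for $t>0$. Hence
\begin{equation*}
    c(A_1+mA_2)=c\big((-A_1)+m(-A_2)\big).
\end{equation*}
Reflection exchanges the two interval lengths of a two-interval set, so $-A_1$ and $-A_2$ are L-sets, and they carry the same indices $c_1,c_2$ (hence the same $r_1,r_2$) as $A_1,A_2$. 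Because (\ref{positive_assumption}) depends only on $r_1,r_2$, it continues to hold.

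I would then normalize the reflected configuration into the standard form of Lemma \ref{L-set,L-set}. Using the R-set parametrization of Lemma \ref{LR-set}, the right interval of $A_j$ exceeds the left by exactly $n_j$, so after reflecting, $-A_j$ has left-endpoint spacing $1+n_j$. Rescaling the whole sum by $1/(1+n_1)$ turns $-A_1$ into a standard L-set $B_1$ with index $c_1$ and turns $m(-A_2)$ into $m'B_2$, where $B_2$ is a standard L-set with index $c_2$ and $m'=m(1+n_2)/(1+n_1)$; by scale invariance $c(A_1+mA_2)=c(B_1+m'B_2)$.

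The one genuine obstacle is that $m'$ need not lie in $(0,1]$, which is where I expect the argument to require the most care. If $m'\leq 1$, Lemma \ref{L-set,L-set} applies to $B_1+m'B_2$ immediately. If $m'>1$, I would invoke scale invariance once more, $c(B_1+m'B_2)=c\big(B_2+\tfrac{1}{m'}B_1\big)$ with $\tfrac{1}{m'}\in(0,1)$, and apply Lemma \ref{L-set,L-set} to the swapped sum. The swap interchanges $r_1$ and $r_2$, but the target bound $\min_{\sigma\in S_2}\tfrac{1-r_{\sigma(1)}-2r_{\sigma(2)}}{3+r_{\sigma(1)}+2r_{\sigma(2)}}$ is symmetric in $(r_1,r_2)$ thanks to the minimum over $S_2$, so the same lower bound emerges in both cases, establishing (\ref{lower_bound_inequality}).

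Should a self-contained argument be preferred for uniformity with the preceding lemmata, one can instead mirror them directly: write $A_1=I_1\cup J_1$ and $mA_2=I_2\cup J_2$ via Lemma \ref{LR-set}, compute $g_1,\dots,g_4$, and prove the analogue of Claim \ref{biggest_gap_formula} by checking $R(I_1+I_2)\leq r_1+r_2<1$. The identities $g_1-g_3=n_1\geq 0$ and $g_2-g_4=-m<0$ collapse the two cases $\min\{g_3,g_4\}=g_3$ and $\min\{g_3,g_4\}=g_4$ to $\max\{F_1,F_2\}$ and $\max\{F_1,F_4\}$, with $F_1$ non-decreasing and $F_2,F_4$ non-increasing in $m$; minimizing at the crossings $m=\tfrac{1}{2+n_2}$ and $m=\tfrac{1}{1+n_2}$ yields in the first case $\tfrac{1-2r_1-r_2+n_1(1-r_1)}{3+2r_1+r_2+n_1(1+r_1)}\geq\tfrac{1-2r_1-r_2}{3+2r_1+r_2}$. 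The main obstacle on this route is the second case, whose crossing value $\gamma(n_1,n_2)$ must be shown non-decreasing in $n_1$ and non-increasing in $n_2$, driving $(n_1,n_2)$ to $(0,\tfrac{2r_2}{1-r_2})$ and giving $\tfrac{1-r_1-r_2-r_1r_2}{2+r_1+2r_2+r_1r_2}$; one then verifies the polynomial inequality showing this exceeds $\tfrac{1-r_1-2r_2}{3+r_1+2r_2}$, splitting on whether $r_2\leq\tfrac13$ or $r_2>\tfrac13$.
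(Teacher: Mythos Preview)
Your reflection argument is correct and gives a genuinely different, cleaner route than the paper. The key steps---$c(-A)=c(A)$, reflection takes an R-set to (a translate and dilate of) a standard L-set with the same $r_j$, and the scaling factor $m'=m(1+n_2)/(1+n_1)$---are all right, and your handling of $m'>1$ by swapping the roles of $B_1,B_2$ is legitimate because the target bound $\min_{\sigma\in S_2}(\cdot)$ is symmetric in $(r_1,r_2)$. This reduction in fact shows more than the current lemma: it would also collapse the R--L case (Lemma~\ref{R-set,L-set}) to the L--R case (Lemma~\ref{L-set,R-set}), so only two of the four configurations need a direct computation.

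The paper instead carries out the direct gap analysis you sketch as a backup, but with a shortcut in Case~2 that you did not spot. From $g_4\le g_3$ one reads off $m\ge\frac{1+n_1}{1+n_2}\ge\frac{1}{2+n_2}$; since $F_1$ is non-decreasing, $c(A_1+mA_2)\ge F_1(m)\ge F_1\bigl(\tfrac{1}{2+n_2}\bigr)$, which is exactly the Case~1 crossing value already bounded below by $\frac{1-2r_1-r_2}{3+2r_1+r_2}$. This bypasses your proposed $F_1=F_4$ crossing at $m=\tfrac{1}{1+n_2}$, the subsequent monotonicity of $\gamma(n_1,n_2)$, and the final polynomial comparison entirely. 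So while your direct outline would also work, the paper's Case~2 is one line; and your reflection approach is shorter still.
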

\begin{proof}
    Using Lemma \ref{LR-set}, we can write
    \begin{equation*}
        \begin{split}
            A_1&=\left[0,r_1-n_1\left(\frac{1-r_1}{2}\right)\right]\cup\left[1,1+r_1+n_1\left(\frac{1+r_1}{2}\right)\right]=:I_1\cup J_1,\\
            mA_2&=\left[0,mr_2-mn_2\left(\frac{1-r_2}{2}\right)\right]\cup\left[m,m+mr_2+mn_2\left(\frac{1+r_2}{2}\right)\right]=:I_2\cup J_2.
        \end{split}
    \end{equation*}
    Using Minkowski addition we get
    \begin{equation*}
        A_1+mA_2=(I_1+I_2)\cup(I_1+J_2)\cup(J_1+I_2)\cup(J_1+J_2).
    \end{equation*}
    The gaps $g_j$, $1\leq j\leq 4$ are defined as in Lemma \ref{L-set,R-set}. Written explicitly the gaps are
    \begin{equation*}
        \begin{split}
             g_1(m)&=m-r_1-mr_2+n_1\left(\frac{1-r_1}{2}\right)+mn_2\left(\frac{1-r_2}{2}\right),\\
             g_2(m)&=1-m-r_1-mr_2+n_1\left(\frac{1-r_1}{2}\right)-mn_2\left(\frac{1+r_2}{2}\right),\\
             g_3(m)&=m-r_1-mr_2-n_1\left(\frac{1+r_1}{2}\right)+mn_2\left(\frac{1-r_2}{2}\right),\\
             g_4(m)&=1-r_1-mr_2+n_1\left(\frac{1-r_1}{2}\right)-mn_2\left(\frac{1+r_2}{2}\right).
        \end{split}
    \end{equation*}

    \begin{claim}\label{biggest_gap_RR}
        The biggest gap of the sumset is
        \begin{equation*}
            G(A_1+mA_2)=\max\{g_1(m),g_2(m),\min\{g_3(m),g_4(m)\},0\}.
        \end{equation*}
    \end{claim}

    \begin{proof}[Proof of Claim \ref{biggest_gap_RR}]
        Following the same routine as the previous claims, we calculate
        \begin{equation*}
            \begin{split}
                R(I_1+I_2)&=r_1+mr_2-n_1\left(\frac{1-r_1}{2}\right)-mn_2\left(\frac{1-r_2}{2}\right)\\
                &\leq r_1+r_2<1.
            \end{split}
        \end{equation*}
        We already know that $g_0(m)$ cannot be a candidate for largest gap, so we have proved the claim.
    \end{proof}
    
    Omitting the $0$ in the above claim we consider two cases.
    \begin{case}
        $\min\{g_3(m),g_4(m)\}=g_3(m)$.
    \end{case}
    Then
    \begin{equation*}
        c(A_1+mA_2)=\max\{F_1(m),F_2(m),F_3(m)\}\geq\max\{F_1(m),F_2(m)\}.
    \end{equation*}. 
    We will show that $F_1(m)$ is non-decreasing, and that $F_2(m)$ is non-increasing. For $F_1(m)$, write
    \begin{equation*}
        F_1(m)=\frac{m(1-r_2)\left(1+\frac{n_2}{2}\right)-r_1+n_1\left(\frac{1-r_1}{2}\right)}{1+m(1+r_2)\left(1+\frac{n_2}{2}\right)+r_1+n_1\left(\frac{1+r_1}{2}\right)}.
    \end{equation*}
    Then
    \begin{equation*}
        \begin{split}
            F_1^{\prime}(m)\approx &(1-r_2)\left(1+\frac{n_2}{2}\right)\left(1+r_1+n_1\left(\frac{1+r_1}{2}\right)\right)\\
            &+(1+r_2)\left(1+\frac{n_2}{2}\right)\left(r_1-n_1\left(\frac{1-r_1}{2}\right)\right)\geq0,
        \end{split}
    \end{equation*}
    which verifies that $F_1(m)$ is non-decreasing. For $F_2(m)$, write
    \begin{equation*}
        F_2(m)=\frac{1-m(1+r_2)\left(1+\frac{n_2}{2}\right)-r_1+n_1\left(\frac{1-r_1}{2}\right)}{1+m(1+r_2)\left(1+\frac{n_2}{2}\right)+r_1+n_1\left(\frac{1+r_1}{2}\right)}.
    \end{equation*}
    Then
    \begin{equation*}
        F_2^{\prime}(m)\approx -(1+r_2)\left(1+\frac{n_2}{2}\right)(2+n_1)\leq0,
    \end{equation*}
    which verifies that $F_2(m)$ is non-increasing. Now, $F_1(m)=F_2(m)$ if and only if $m=\frac{1}{2+n_2}$. Then substituting $m=\frac{1}{2+n_2}$ into $F_1(m)$ we have
    \begin{equation*}
        \begin{split}
            \max\{F_1(m),F_2(m)\}&\geq\frac{(1-r_2)\left(\frac{1+\frac{n_2}{2}}{2+n_2}\right)-r_1+n_1\left(\frac{1-r_1}{2}\right)}{1+(1+r_2)\left(\frac{1+\frac{n_2}{2}}{2+n_2}\right)+r_1+n_1\left(\frac{1+r_1}{2}\right)}\\
            &=\frac{1-2r_1-r_2+n_1(1-r_1)}{3+2r_1+r_2+n_1(1+r_1)}=:\gamma(n_1).
        \end{split}
    \end{equation*}
    We have
    \begin{equation*}
        \gamma^{\prime}(n_1)\approx 2(1+r_2)\geq0.
    \end{equation*}
    It follows that 
    \begin{equation*}
        \gamma(n_1)\geq\gamma(0)=\frac{1-2r_1-r_2}{3+2r_1+r_2}.
    \end{equation*}
    This proves the lower bound.
    \begin{case}
        $\min\{g_3(m),g_4(m)\}=g_4(m)$.
    \end{case}
    Then $m\geq\frac{1+n_1}{1+n_2}\geq\frac{1}{2+n_2}$, where we used the inequality $g_4(m)\leq g_3(m)$. Since $F_1(m)$ is non-decreasing, we use the previous case to get
    \begin{equation*}
        c(A_1+mA_2)=\max\{F_1(m),F_2(m),F_4(m)\}\geq F_1(m)\geq F_1\left(\frac{1}{2+n_2}\right)\geq \frac{1-2r_1-r_2}{3+2r_1+r_2}.
    \end{equation*}
    This completes the proof.
\end{proof}
Now we can prove the main result.
\begin{proof}[Proof of Theorem \ref{Lower_bound}]
    Let $A_1$ and $A_2$ be non-empty, compact sets in $\mathbb{R}$. Suppose that 
    \begin{equation*}
        \min_{\sigma\in S_2}(1-r_{\sigma(1)}-2r_{\sigma(2)})>0.
    \end{equation*}
    By using Lemma \ref{filling_gaps} we may assume that $A_1$ and $A_2$ are each homothetic to either an L-set, or an R-set (we will allow for the possibility that $n_1$ or $n_2$ is equal to $0$, so we also cover the possibility that either set is a balanced set). So, using translation invariance so $\min(A_i)=0$, we may assume that $A_1$ and $A_2$ have the form
    \begin{equation*}
        \begin{split}
            A_1&=[0,m_1\alpha_1]\cup[m_1,m_1+m_1\alpha_1^{\prime}],\\
            A_2&=[0,m_2\alpha_2]\cup[m_2,m_2+m_2\alpha_2^{\prime}].
        \end{split}
    \end{equation*}
    Without loss of generality, we may assume that $m_2\leq m_1$. By using invariance under affine transformations we may further assume, by replacing each set $A_i$ with $m_1^{-1}A_i$ if needed, that $A_1$ and $A_2$ have the form
    \begin{equation*}
    \begin{split}
        A_1&=[0,\alpha_1]\cup[1,1+\alpha_1^{\prime}],\\
        A_2&=[0,m\alpha_2]\cup[m,m+m\alpha_2^{\prime}],
    \end{split}
    \end{equation*}
    where $m\in[0,1]$. This is exactly the situation of Lemmas \ref{L-set,R-set},\ref{R-set,L-set},\ref{L-set,L-set}, and \ref{R-set,R-set}. This verifies the lower bound for arbitrary compact sets in $\mathbb{R}$. Now we must show that this is the best lower bound. Let $c_1,c_2\in[0,1]$. Choose $A_{\sigma(1)}=[0,r(c_{\sigma(1)})]\cup[1,1+r(c_{\sigma(1)})]$, and $A_{\sigma(2)}=[0,2r(c_{\sigma(2)})]\cup[2,2+2r(c_{\sigma(2)})]$. Then
    \begin{equation*}
    \begin{split}
        A_{\sigma(1)}+A_{\sigma(2)}=&[0,r(c_{\sigma(1)})+2r(c_{\sigma(2)})]\cup[1,1+r(c_{\sigma(1)})+2r(c_{\sigma(2)})]\\
        &\cup[2,2+r(c_{\sigma(1)})+2r(c_{\sigma(2)})]\cup[3,3+r(c_{\sigma(1)})+2r(c_{\sigma(2)})].
    \end{split}
    \end{equation*}
    If $\min_{\sigma\in S_2}(1-r_{\sigma(1)}-2r_{\sigma(2)})\leq0$, then we see that $c(A_1+A_2)=0$ as long as we choose the appropriate $\sigma\in S_2$ which achieves the minimum. If $\min_{\sigma\in S_2}(1-r_{\sigma(1)}-2r_{\sigma(2)})>0$, then choose the $\sigma\in S_2$ which minimizes, and we see that 
    \begin{equation*}
        c(A_1+A_2)=\frac{1-r(c_{\sigma(1)})-2r(c_{\sigma(2)})}{3+r(c_{\sigma(1)})+2r(c_{\sigma(2)})}.
    \end{equation*}
    In any case, for fixed convexity indices $c_1$, $c_2$, we have found compact sets $A_1,A_2$ such that $c(A_j)=c_j$ and which achieve the lower bound.
    \end{proof}
\subsection{Compact Cartesian product sets}\label{cartesian_bound}
As a corollary to Theorem \ref{Lower_bound} we can show that the convexity index of Minkowski sums of Cartesian product sets in $\mathbb{R}^n$ have a certain non-trivial lower bound.
\begin{theorem}
    Let $n\geq2$ be an integer. For $j\in[2]$ and $i\in[n]$, let $A_{ji}$ be non-empty, compact sets in $\mathbb{R}$. For $j\in[2]$ define
    \begin{equation}\label{cartesian_set}
        A_{j}:=A_{j1}\times A_{j2}\times\dots\times A_{jn}\in\mathbb{R}^n.
    \end{equation}
    Then
    \begin{equation}\label{Cartesian_product_bound}
    c(A_1+A_2)\geq\max\left(0,\max_{i\in[n]}\min_{\sigma\in S_2}\frac{1-r_{\sigma(1)i}-2r_{\sigma(2)i}}{3+r_{\sigma(1)i}+2r_{\sigma(2)i}}\right).
    \end{equation}
    Moreover, this is the best lower bound in the same sense that if $c_{ji}\in[0,1]$ for $j\in[2]$ and $i\in[n]$, then
    \begin{equation*}
        \min_{\substack{A_{ji}\in\mathcal{K}_1\\c(A_{ji})=c_{ji}\\j\in[2],i\in[n]}}c\left(A_1+A_2\right)=\max\left(0,\max_{i\in[n]}\min_{\sigma\in S_2}\frac{1-r(c_{\sigma(1)i})-2r(c_{\sigma(2)i})}{3+r(c_{\sigma(1)i})+2r(c_{\sigma(2)i}})\right),
    \end{equation*}
    where $A_1$ and $A_2$ are defined as in (\ref{cartesian_set}).
\end{theorem}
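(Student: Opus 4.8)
The plan is to reduce the entire statement to the one-dimensional Theorem~\ref{Lower_bound} by establishing a product formula for the Schneider index. The central claim I would prove first is that for nonempty compact sets $B_1,\dots,B_n$ (in $\mathbb{R}$, or more generally in Euclidean spaces),
\[
c(B_1\times\cdots\times B_n)=\max_{i\in[n]}c(B_i).
\]
To get there I would record two elementary facts. First, the convex hull of a product is the product of the hulls, $\textup{conv}(B_1\times\cdots\times B_n)=\textup{conv}(B_1)\times\cdots\times\textup{conv}(B_n)$, so that for any $\lambda\ge 0$ one has the coordinatewise identity $B+\lambda\,\textup{conv}(B)=\prod_{i\in[n]}\bigl(B_i+\lambda\,\textup{conv}(B_i)\bigr)$. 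Second, a product of nonempty sets is convex if and only if each factor is convex. Combining these, $B+\lambda\,\textup{conv}(B)$ is convex exactly when every $B_i+\lambda\,\textup{conv}(B_i)$ is convex.

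Next I would observe that the admissible exponents for a single set form an upward ray: if $A+\lambda\,\textup{conv}(A)$ is convex and $\lambda'\ge\lambda$, then
\[
A+\lambda'\,\textup{conv}(A)=\bigl(A+\lambda\,\textup{conv}(A)\bigr)+(\lambda'-\lambda)\,\textup{conv}(A)
\]
is a Minkowski sum of two convex sets, hence convex. Therefore the set of admissible $\lambda$ for $B_i$ has infimum $c(B_i)$, the admissible set for the product is the intersection of these rays, and its infimum is $\max_{i}c(B_i)$. (Note that no appeal to attainment of the infimum is needed, since the infimum of an intersection of rays depends only on their left endpoints.) This establishes the product formula and is the crux of the argument.

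With the formula in hand the theorem follows quickly. Minkowski addition distributes over Cartesian products coordinatewise, so $A_1+A_2=\prod_{i\in[n]}\bigl(A_{1i}+A_{2i}\bigr)$, and the product formula gives $c(A_1+A_2)=\max_{i\in[n]}c(A_{1i}+A_{2i})$. Applying Theorem~\ref{Lower_bound} in each coordinate and taking the maximum over $i$ yields the lower bound (\ref{Cartesian_product_bound}); the outer $\max(0,\cdot)$ appears by pulling each coordinate's $\max(0,\cdot)$ outside the maximum over $i$, using $\max_i\max(0,x_i)=\max(0,\max_i x_i)$.

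For sharpness I would exploit that the coordinates are chosen independently. Since $c(A_1+A_2)=\max_i c(A_{1i}+A_{2i})$ with each factor depending only on the $i$th pair of sets, minimizing the maximum decouples into a max of minima: for any family of functions of independent variables one has $\min\max_i(\cdot)=\max_i\min(\cdot)$, because each coordinate's minimizer can be realized simultaneously. Concretely, for prescribed indices $c_{ji}$ I would, in each coordinate $i$, select the $\sigma\in S_2$ attaining that coordinate's minimum and take the extremal configurations from the proof of Theorem~\ref{Lower_bound}, namely $A_{\sigma(1)i}=[0,r(c_{\sigma(1)i})]\cup[1,1+r(c_{\sigma(1)i})]$ and $A_{\sigma(2)i}=[0,2r(c_{\sigma(2)i})]\cup[2,2+2r(c_{\sigma(2)i})]$. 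Each coordinate then attains its one-dimensional value, and the product formula shows their maximum equals the claimed right-hand side. The main obstacle is the careful justification of the product formula; once that is in place the remainder is bookkeeping, with the only delicate point being the min--max interchange, which is valid precisely because the coordinatewise choices are unconstrained relative to one another.
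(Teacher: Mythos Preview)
Your proposal is correct and follows essentially the same route as the paper: establish the product formula $c(B_1\times\cdots\times B_n)=\max_i c(B_i)$ via the coordinatewise decomposition of $B+\lambda\,\textup{conv}(B)$, then apply Theorem~\ref{Lower_bound} in each coordinate and use the independence of the coordinate choices for sharpness. Your version is slightly more careful in justifying the product formula (the upward-ray observation) and in articulating the min--max interchange, but the substance is the same.
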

\begin{proof}
    For $\lambda\geq0$, we have
    \begin{equation*}
    \begin{split}
        (A_1+A_2)&+\lambda\textup{conv}(A_1+A_2)\\
        &=(A_{11}+A_{21})\times\dots\times(A_{1n}+A_{2n})+\lambda(\textup{conv}(A_{11}+A_{21})\times\dots\times\textup{conv}(A_{1n}+A_{2n}))\\
        &=[(A_{11}+A_{21})+\lambda\textup{conv}(A_{11}+A_{21})]\times\dots\times[(A_{1n}+A_{2n})+\lambda\textup{conv}(A_{1n}+A_{2n})].
    \end{split}
    \end{equation*}
    The above set is convex if and only if the component sets are each convex. It follows that
    \begin{equation}\label{cartesian_product_index}
        c(A_1+A_2)=\max\{c(A_{11}+A_{21}),\dots,c(A_{1n}+A_{2n})\}.
    \end{equation}
    By Theorem \ref{Lower_bound}, we have for each $i\in[n]$ that
    \begin{equation}\label{lb_1-dimension}
        c(A_{1i}+A_{2i})\geq\max\left(0,\min_{\sigma\in S_2}\frac{1-r_{\sigma(1)i}-2r_{\sigma(2)i}}{3+r_{\sigma(1)i}+2r_{\sigma(2)i}}\right).
    \end{equation}
    Putting (\ref{cartesian_product_index}) and (\ref{lb_1-dimension}) together, we have the necessary lower bound. To see that the lower bound is the best lower bound, just use the fact that (\ref{lb_1-dimension}) can be made equality according to the statement of Theorem \ref{Lower_bound},  then the maximum (\ref{cartesian_product_index}) must be equal to the bound (\ref{Cartesian_product_bound}). This completes the proof.
\end{proof}

\subsection{Open problems}\label{open_problems}
The problem of finding the best lower bound for the convexity index of Minkowski sums of arbitrary compact sets $A_1$ and $A_2$ in $\mathbb{R}^n$ when $n\geq2$ is interesting.
\begin{problem}\label{high_dim_bound}
    Find the function $F_n:[0,n]^{2}\rightarrow\mathbb{R}$ which solves the minimization problem
    \begin{equation*}
        \min_{\substack{A\in \mathcal{K}_{n}(2)\\ c(A_j)=c_j\\ j\in[2]}}c(A_1+A_2)=F_n(c_1,c_2).
    \end{equation*}
\end{problem}

It is natural to try to extend Theorem \ref{Lower_bound} to sums with $k\geq2$ compact sets in $\mathbb{R}$.
\begin{problem}\label{lower_bound_large_sum}
    Find the function $F_1:[0,1]^{k}\rightarrow\mathbb{R}$ which solves the minimization problem
    \begin{equation*}
        \min_{\substack{A\in\mathcal{K}_{1}(k)\\c(A_j)=c_j\\j\in[k]}}c(A_1+\dots+A_k)=F_1(c_1,\dots,c_k).
    \end{equation*}
\end{problem}

The next proposition gives a possible candidate for the function $F_1(c_1,\dots,c_k)$ in Problem \ref{lower_bound_large_sum}.
\begin{proposition}\label{lb_candidate}
    The solution $F_1$ to the minimization problem \ref{lower_bound_large_sum} satisfies
    \begin{equation*}
        F_1(c_1,\dots,c_k)\leq\max\left(0, \min_{\sigma\in S_k}\frac{1-\sum_{j=1}^{k}2^{j-1}r(c_{\sigma(j)})}{2^k-1+\sum_{j=1}^{k}2^{j-1}r(c_{\sigma(j)})}\right).
    \end{equation*}
\end{proposition}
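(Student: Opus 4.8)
The plan is to prove this upper bound on the minimizer $F_1$ by exhibiting, for each permutation $\sigma\in S_k$, an explicit admissible configuration of compact sets whose sumset realizes the value inside the minimum. This directly generalizes the extremal construction at the end of the proof of Theorem \ref{Lower_bound}: there the minimizing pair consisted of two balanced sets, the second having its gap doubled relative to the first, and the natural extension to $k$ sets is to scale the $j$-th balanced set by $2^{j-1}$. Since $F_1$ is defined as a minimum over all admissible tuples, producing one such tuple for each $\sigma$ immediately furnishes the desired upper bound.

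Concretely, fix $\sigma\in S_k$, write $r_{\sigma(j)}:=r(c_{\sigma(j)})$, and set
\[
    A_{\sigma(j)}:=2^{j-1}\bigl([0,r_{\sigma(j)}]\cup[1,1+r_{\sigma(j)}]\bigr),\qquad j\in[k].
\]
Each factor is a balanced set, so by the one-dimensional formula $c(A)=G(A)/\textup{diam}(A)$ together with the scale-invariance of the index (both $G$ and $\textup{diam}$ scale by $2^{j-1}$), one checks that $c(A_{\sigma(j)})=c_{\sigma(j)}$, making the tuple admissible. The first step is then to write $A_{\sigma(j)}=\{0,2^{j-1}\}+[0,2^{j-1}r_{\sigma(j)}]$, so that the Minkowski sum factors cleanly as
\[
    \sum_{j=1}^{k}A_{\sigma(j)}=\Bigl(\sum_{j=1}^{k}\{0,2^{j-1}\}\Bigr)+\Bigl[0,\textstyle\sum_{j=1}^{k}2^{j-1}r_{\sigma(j)}\Bigr].
\]

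The key combinatorial observation is a binary-expansion identity: every integer in $\{0,1,\dots,2^k-1\}$ has a unique representation $\sum_{j}\varepsilon_j 2^{j-1}$ with $\varepsilon_j\in\{0,1\}$, so $\sum_{j=1}^{k}\{0,2^{j-1}\}=\{0,1,\dots,2^k-1\}$. Writing $L:=\sum_{j=1}^{k}2^{j-1}r_{\sigma(j)}$, the sumset is therefore the union of $2^k$ translates of $[0,L]$ based at the consecutive integers $0,1,\dots,2^k-1$. I would then read off its index from the one-dimensional formula: the diameter is $2^k-1+L$; if $L<1$ each gap between adjacent intervals equals $1-L$, which is the largest gap, whereas if $L\geq1$ the translates overlap and the union collapses to the convex interval $[0,2^k-1+L]$. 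In either case
\[
    c\Bigl(\sum_{j=1}^{k}A_{\sigma(j)}\Bigr)=\max\Bigl(0,\frac{1-L}{2^k-1+L}\Bigr).
\]

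Because $F_1$ is a minimum over all admissible configurations, this gives $F_1(c_1,\dots,c_k)\leq\max\bigl(0,(1-L)/(2^k-1+L)\bigr)$ for every $\sigma$, and minimizing over $\sigma\in S_k$ yields the bound with the minimum placed \emph{outside} the $\max(0,\cdot)$. To recover the stated form I would finally invoke that $x\mapsto\max(0,x)$ is non-decreasing and hence commutes with minimization, i.e. $\min_{\sigma}\max(0,g(\sigma))=\max(0,\min_{\sigma}g(\sigma))$. The computations here are routine; the genuine content is identifying the geometric $2^{j-1}$-scaling, since this is precisely what turns the sum of two-point sets into a full arithmetic progression of integers. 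The only points requiring care are the binary-representation identity, the degenerate regime $L\geq1$ where the sumset becomes convex, and this last reconciliation of the order of the inner and outer extrema.
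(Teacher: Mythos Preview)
Your proof is correct and follows essentially the same approach as the paper: both use the identical construction $A_{\sigma(j)}=2^{j-1}\bigl([0,r_{\sigma(j)}]\cup[1,1+r_{\sigma(j)}]\bigr)$ and compute the sumset as $\bigcup_{i=0}^{2^k-1}[i,i+L]$. Your version is somewhat more detailed---you spell out the factorization via $\{0,2^{j-1}\}+[0,2^{j-1}r_{\sigma(j)}]$ and the binary-expansion identity, treat the degenerate case $L\geq1$ explicitly, and justify commuting $\min_\sigma$ with $\max(0,\cdot)$---whereas the paper simply fixes the minimizing $\sigma_*$ at the outset and reads off the gap and diameter directly.
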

\begin{proof}
    Choose $\sigma_{*}\in S_k$ so that
    \begin{equation*}
       \frac{1-\sum_{j=1}^{k}2^{j-1}r(c_{\sigma_{*}(j)})}{2^k-1+\sum_{j=1}^{k}2^{j-1}r(c_{\sigma_{*}(j)})}=\min_{\sigma\in S_k}\frac{1-\sum_{j=1}^{k}2^{j-1}r(c_{\sigma(j)})}{2^k-1+\sum_{j=1}^{k}2^{j-1}r(c_{\sigma(j)})}.
    \end{equation*}
    For each $j\in[k]$ define $A_{\sigma_{*}(j)}:=[0,2^{j-1}r(c_{\sigma_{*}(j)})]\cup[2^{j-1},2^{j-1}+2^{j-1}r(c_{\sigma_{*}(j)})]$. Then
    \begin{equation*}
        \sum_{j=1}^{k}A_{\sigma_{*}(j)}=\bigcup_{j=0}^{2^k-1}\left[j,j+\sum_{j=1}^{k}2^{j-1}r(c_{\sigma_{*}(j)})\right].
    \end{equation*}
    Since all the gaps in this sumset are the same length, the largest gap is
    \begin{equation*}
        G=\max\left(0,1-\sum_{j=1}^{k}2^{j-1}r(c_{\sigma_{*}(j)})\right).
    \end{equation*}
    The diameter of the sumset is
    \begin{equation*}
        d=2^{k}-1+\sum_{j=1}^{k}2^{j-1}r(c_{\sigma_{*}(j)}).
    \end{equation*}
    Since the convexity index is $c(A_1+\dots+A_k)=\frac{G}{d}$, the result follows immediately. 
\end{proof}

As additional motivation for solving Problem \ref{lower_bound_large_sum}, we will show how a solution will allow for an explicit formula for what is called the \textit{induced} measure of non-convexity $c^{*}$.
\begin{definition}
    For some integer $n\geq1$, let $x\in[0,n]$, and define
    \begin{equation*}
        N_x:=\inf\{k\in\mathbb{Z}:\textup{There exists } A\in \mathcal{K}_n(k) \textup{ such that } c(A_j)=x\textup{ } \forall j\in[k]\textup{ and } A_1+\dots +A_k \textup{ is convex}.\}.
    \end{equation*}
    The measure of non-convexity induced by the Schneider non-convexity index is defined by
    \begin{equation*}
        c^{*}(A):=N_{c(A)}-1.
    \end{equation*}
    If $A$ is the empty set, then set $c^{*}(A)=0$.
\end{definition}

\begin{remark}
It is noted here that the motivation for the definition of $c^{*}$ comes from the Shapley-Folkman Theorem \cite{emerson1,starr1,starr2}, which says that the $k$th set average $A(k):=(1/k)\sum_{i=1}^{k}A$ converges to $\textup{conv}(A)$ in Hausdorff distance and volume deficit. We are interested in understanding specifically how convex a large sum of sets with the same convexity index can be made.
\end{remark}

If equality holds in Proposition \ref{lb_candidate}, then we can obtain a formula for $c^{*}(A)$ when $A$ is compact in $\mathbb{R}$.

\begin{proposition}
     Let $A\subseteq\mathbb{R}$ be a non-empty, compact set such that $c(A)<1$. Assume that the inequality in Proposition \ref{lb_candidate} is in fact an equality. Then
        \begin{equation*}
            c^{*}(A)=\textup{Int}\left[\log_{2}\left(\frac{1}{1-c(A)}\right)\right],
        \end{equation*}
     where for any $x\in\mathbb{R}$, we define $\textup{Int}[x]:=\inf\{k\in\mathbb{Z}:k\geq x\}$. Moreover, if $c(A)=1$, then $c^{*}(A)=\infty$.
\end{proposition}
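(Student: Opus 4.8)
The plan is to specialize the (assumed) equality in Proposition~\ref{lb_candidate} to the case where all $k$ summands share the common convexity index $c:=c(A)$, and then read off $N_{c}$ as the least number of summands that forces the minimal convexity index to vanish. Writing $r:=r(c)=\frac{1-c}{1+c}$, the fact that every argument equals $c$ makes the minimum over $\sigma\in S_{k}$ trivial and collapses the geometric sum to $\sum_{j=1}^{k}2^{j-1}r(c_{\sigma(j)})=r\sum_{j=1}^{k}2^{j-1}=r(2^{k}-1)$. Hence, with $k$ equal arguments,
\begin{equation*}
    F_{1}(c,\dots,c)=\max\left(0,\ \frac{1-r(2^{k}-1)}{(2^{k}-1)(1+r)}\right).
\end{equation*}

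Next I would link $N_{c}$ to this formula. By definition $N_{c}$ is the least $k$ for which some admissible tuple has convex Minkowski sum, i.e. for which $F_{1}(c,\dots,c)=0$. Here the equality hypothesis matters twice: because the bound of Proposition~\ref{lb_candidate} is \emph{attained} by the explicit sets constructed in its proof, the value $F_{1}=0$ genuinely certifies the existence of a tuple with the prescribed common index whose sum is convex, while $F_{1}>0$ excludes it (the bound is a true minimum, not merely an infimum). Since $(2^{k}-1)(1+r)>0$ for every $k\ge 1$, we get $F_{1}(c,\dots,c)=0$ precisely when $1-r(2^{k}-1)\le 0$.

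The computational core is then to solve this inequality for $k$. Substituting $r=\frac{1-c}{1+c}$ gives $r(2^{k}-1)\ge 1\iff 2^{k}\ge\frac{2}{1-c}\iff 2^{k-1}\ge\frac{1}{1-c}\iff k\ge 1+\log_{2}\!\left(\frac{1}{1-c}\right)$. Thus $N_{c}$ is the least integer that is at least $1+\log_{2}(1/(1-c))$, namely $N_{c}=\textup{Int}\!\left[1+\log_{2}(1/(1-c))\right]$; since $1$ is an integer, $\textup{Int}[1+x]=1+\textup{Int}[x]$, so $N_{c}=1+\textup{Int}\!\left[\log_{2}(1/(1-c))\right]$ and therefore $c^{*}(A)=N_{c}-1=\textup{Int}\!\left[\log_{2}(1/(1-c))\right]$. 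For the boundary case $c=1$ we have $r=0$, so $1-r(2^{k}-1)=1>0$ for all $k$; no finite number of summands yields a convex sum, $N_{c}=\inf\varnothing=\infty$, and $c^{*}(A)=\infty$, consistent with $\log_{2}(1/(1-c))=\infty$.

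The step I expect to be most delicate is not the algebra but the logical identification of $N_{c}$ with this threshold: one must argue both that below the threshold no admissible configuration is convex and that at the threshold the extremal construction really delivers a convex sum carrying the correct common index. Both directions rest on the equality assumption in Proposition~\ref{lb_candidate}; granting it, the remainder is a routine specialization followed by a one-line solution of an exponential inequality.
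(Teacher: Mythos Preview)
Your proof is correct and follows essentially the same route as the paper: specialize to equal indices, collapse the geometric sum $\sum_{j=1}^{k}2^{j-1}r=r(2^{k}-1)$, solve the resulting inequality $1-r(2^{k}-1)\le 0$ for $k$, and handle the degenerate case $c(A)=1$ separately. Your added discussion of why the equality hypothesis is needed in both directions is a useful clarification that the paper leaves implicit.
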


\begin{proof}
     Assuming equality holds in Proposition \ref{lb_candidate}, then to prove this proposition, we need to solve the inequality
        \begin{equation*}
            1-\sum_{j=1}^{k}2^{j-1}r\left(c(A)\right)\leq0.
        \end{equation*}
        Using the formula $\sum_{j=1}^{k}2^{j-1}=2^{k}-1$, we solve the inequality to get
        \begin{equation*}
            \log_{2}\left(\frac{2}{1-c(A)}\right)\leq k.   
        \end{equation*}
        From this, we see that 
        \begin{equation*}
            N_{c(A)}=\textup{Int}\left[\log_{2}\left(\frac{2}{1-c(A)}\right)\right]=1+\textup{Int}\left[\log_{2}\left(\frac{1}{1-c(A)}\right)\right],
        \end{equation*}
    which completes the proof (If $c(A)=1$, it is only necessary to realize that no finite number of sets with exactly two points can add to an interval).
    \end{proof}
    
\section{Fractional subadditivity of the Schneider non-convexity index}\label{fractionally_subadditive}
\subsection{Preliminary information}
We will start off with an important lemma about reducing to partitions with rational weights. This idea is not new, as it is originally used for this type of problem in \cite{barthe1} for the proof that Lebesgue measure is fractionally superadditive. The idea also works for the Schneider non-convexity index.
\begin{lemma}[Barthe and Madiman \cite{barthe1}]\label{rational_partitions}
    Let $m$ be a positive integer, and suppose that for each rational fractional partition $(\mathcal{G},\beta)$ of $[m]$ (i.e $\beta(S)\in\mathbb{Q}$ for each $S\in\mathcal{G}$) we have the fractional subbadditive inequality
    \begin{equation}\label{sub_additive_rational_assumption}
        c\left(\sum_{i=1}^{m}A_i\right)\leq\sum_{S\in\mathcal{G}}\beta(S)c\left(\sum_{i\in S}A_i\right).
    \end{equation}
    Then the fractional subadditive inequality (\ref{sub_additive_rational_assumption}) holds for any fractional partition of $[m]$.
\end{lemma}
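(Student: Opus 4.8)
The plan is to treat this as an approximation argument, exploiting the fact that for a \emph{fixed} hypergraph the fractional partition conditions are linear equations with integer coefficients. Fix a fractional partition $(\mathcal{G},\beta)$ of $[m]$ with arbitrary, possibly irrational, weights, and let $A_1,\dots,A_m$ be the compact sets in question. Since the left-hand side $c(\sum_{i=1}^m A_i)$ and each of the finitely many numbers $c(\sum_{i\in S}A_i)$ for $S\in\mathcal{G}$ are fixed nonnegative reals, the right-hand side $\sum_{S\in\mathcal{G}}\beta(S)\,c(\sum_{i\in S}A_i)$ is simply a fixed linear functional of the weight vector $\beta\in\mathbb{R}^{\mathcal{G}}$. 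So it suffices to approximate $\beta$ by rational weight vectors that are \emph{themselves} fractional partitions with the same hypergraph $\mathcal{G}$, apply the hypothesis to each, and pass to the limit.

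First I would reduce to the case where all weights are positive: if $\beta(S)=0$ for some $S\in\mathcal{G}$, delete $S$ from $\mathcal{G}$. This changes neither the constraints $\sum_{S\in\mathcal{G}:\,i\in S}\beta(S)=1$ nor the value of the right-hand side, so we may assume $\beta(S)>0$ for every $S\in\mathcal{G}$. Next, consider the affine subspace
\begin{equation*}
    W:=\Bigl\{x\in\mathbb{R}^{\mathcal{G}}:\ \textstyle\sum_{S\in\mathcal{G}:\,i\in S}x(S)=1\ \text{ for all }i\in[m]\Bigr\}.
\end{equation*}
This is the solution set of a linear system with integer coefficients, so by Gaussian elimination over $\mathbb{Q}$ it has a rational point and a rational basis for its direction space; hence $W\cap\mathbb{Q}^{\mathcal{G}}$ is dense in $W$. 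Because $\beta\in W$ has all coordinates strictly positive, every sufficiently close point of $W$ still has positive coordinates, so I can choose rational $\beta_k\in W\cap\mathbb{Q}^{\mathcal{G}}$ with $\beta_k(S)>0$ for all $S$ and $\beta_k\to\beta$. Each $(\mathcal{G},\beta_k)$ is then a genuine \emph{rational} fractional partition of $[m]$, so the hypothesis gives
\begin{equation*}
    c\Bigl(\sum_{i=1}^{m}A_i\Bigr)\leq\sum_{S\in\mathcal{G}}\beta_k(S)\,c\Bigl(\sum_{i\in S}A_i\Bigr).
\end{equation*}
Letting $k\to\infty$ and using that the right-hand side is continuous (indeed linear) in the weights while the left-hand side does not depend on $k$ yields the desired inequality for $(\mathcal{G},\beta)$.

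The one point that needs genuine care --- and the only real obstacle --- is that the rational approximants must be \emph{exact} fractional partitions, not merely approximate ones: they have to satisfy the equality constraints $\sum_{S\ni i}\beta_k(S)=1$ on the nose and have nonnegative weights. Perturbing $\beta$ freely in $\mathbb{R}^{\mathcal{G}}$ would destroy the equality constraints, so the key is to confine the approximation to the rational affine subspace $W$ itself, which is exactly why the rationality of the defining linear system (and hence the density of $W\cap\mathbb{Q}^{\mathcal{G}}$ in $W$) is what makes the argument work. Positivity is then handled for free by the preliminary reduction to strictly positive weights. Note that nowhere is any special property of $c$ used beyond finiteness and nonnegativity of its values, which is why, as the paper remarks, the same reduction works verbatim for Lebesgue measure as in \cite{barthe1}.
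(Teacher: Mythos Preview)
Your argument is correct. The paper does not actually supply its own proof of this lemma: it merely states the result and attributes it to Barthe and Madiman \cite{barthe1}, so there is nothing in the paper to compare your approach against line by line. That said, what you have written is precisely the standard density argument one expects here --- confining the rational approximation to the affine subspace $W$ defined by the integer-coefficient constraints, using that $W\cap\mathbb{Q}^{\mathcal G}$ is dense in $W$, and exploiting strict positivity of the weights to keep the approximants nonnegative --- and this is essentially the method in the cited reference. Your closing observation that only finiteness of the values $c(A_S)$ is used (so the reduction is insensitive to replacing $c$ by Lebesgue measure or any other set function) is also accurate and matches the paper's remark that the idea carries over verbatim from \cite{barthe1}.
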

\begin{remark}
The significance of Lemma \ref{rational_partitions} is that it allows us to reduce proving the fractional subadditive property for arbitrary partitions to proving the property for the partitions which have rational weights. If the weights are all rational, then they have a common denominator $q\in\mathbb{Z}_{+}$. It follows that in the rational case, we need to prove that
\begin{equation*}
   q\cdot c(A_1+\dots+A_m)\leq\sum_{j=1}^{t}c\left(\sum_{i\in S_j}A_i\right),
\end{equation*}
where the sets $S_j$ are an enumeration of the sets $S\in\mathcal{G}$, possibly with repetition, $q\in\mathbb{Z}^{+}$ is some positive integer, and each $i\in[m]$ belongs to exactly $q$ of the sets $S_j$.
\end{remark}
The other important result is a corollary of Theorem \ref{strong bound} stated in the introduction. The following lemma is essentially given in \cite{fradelizi1}, but we will provide the proof here anyway for completion.
\begin{lemma}[Fradelizi et al. \cite{fradelizi1}]\label{strong_bound_rewrite}
    Let $m$ be a positive integer, and let $A_1,\dots,A_m$ be non-empty, compact sets in $\mathbb{R}^{n}$. Then for any $S,T\subseteq[m]$, we have
    \begin{equation*}
        c\left(\sum_{i\in S\cup T}A_i\right)\leq\max\left\{c\left(\sum_{i\in S}A_i\right),c\left(\sum_{i\in T}A_i\right)\right\}.
    \end{equation*}
\end{lemma}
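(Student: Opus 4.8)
The plan is to reduce the statement directly to Theorem \ref{strong bound} by partitioning the index set $S\cup T$ into three disjoint blocks that exactly realize the $A+B+C$ structure appearing in that theorem. First I would set
\[
A:=A_{S\setminus T},\qquad B:=A_{S\cap T},\qquad C:=A_{T\setminus S},
\]
recalling the notation $A_U=\sum_{i\in U}A_i$. The three index blocks $S\setminus T$, $S\cap T$, and $T\setminus S$ are pairwise disjoint with union $S\cup T$, and Minkowski addition is additive over disjoint index sets, so I obtain the three identities $A+B=A_S$, $B+C=A_T$, and $A+B+C=A_{S\cup T}$.

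With these identities in hand the conclusion is immediate. Applying Theorem \ref{strong bound} to the triple $(A,B,C)$ gives
\[
c\!\left(A_{S\cup T}\right)=c(A+B+C)\le\max\{c(A+B),c(B+C)\}=\max\{c(A_S),c(A_T)\},
\]
which is precisely the desired inequality. In other words, the entire content of the lemma is the bookkeeping of this three-block decomposition, and no new estimate is needed beyond the previously established three-set bound.

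The only point requiring care, and thus the main obstacle, is the degenerate situation in which one of the three index blocks is empty, so that the corresponding Minkowski sum is an empty sum. I would adopt the convention that the empty Minkowski sum equals $\{0\}$, which is convex and hence satisfies $c(\{0\})=0$; since adding $\{0\}$ to any set leaves it unchanged, the three identities above and the application of Theorem \ref{strong bound} remain valid verbatim. Concretely, if $S\cap T=\varnothing$ then $B=\{0\}$ and the inequality degenerates to $c(A_S+A_T)\le\max\{c(A_S),c(A_T)\}$, while if $S\subseteq T$ then $A=\{0\}$ and the claim reduces to $c(A_T)\le\max\{c(A_S),c(A_T)\}$, which holds trivially (and symmetrically when $T\subseteq S$). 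Note also that because each $A_i$ is nonempty, a nonempty index block always yields a nonempty sum, so the only source of degeneracy is an empty index block; handling these cases uniformly through the $\{0\}$-convention keeps the argument clean, though one could equally dispatch them by hand before invoking the theorem.
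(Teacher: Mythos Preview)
Your proof is correct and follows essentially the same approach as the paper: decompose $S\cup T$ into the disjoint blocks $S\setminus T$, $S\cap T$, $T\setminus S$, set $A=A_{S\setminus T}$, $B=A_{S\cap T}$, $C=A_{T\setminus S}$, and apply Theorem~\ref{strong bound}. The only cosmetic difference is that the paper dispatches the degenerate cases (one of the blocks empty) by explicit case analysis before invoking the theorem, whereas you absorb them uniformly via the empty-sum convention $A_\varnothing=\{0\}$; both treatments are valid and yield the same argument.
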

\begin{proof}
    Suppose that $S\cap T=\varnothing$. Then
    \begin{equation*}
        \sum_{i\in S\cup T}A_i=\sum_{i\in S}A_i+\sum_{i\in T}A_i.
    \end{equation*}
    We then use the inequality $c(A+B)\leq\max\{c(A),c(B)\}$ with $A:=\sum_{i\in S}A_i$ and $B:=\sum_{i\in T}A_i$. Next, suppose that $S\backslash T=\varnothing$. Then $S\subseteq T$ and we have 
    \begin{equation*}
        \sum_{i\in S\cup T}A_i=\sum_{i\in T}A_i,
    \end{equation*}
    from which the inequality immediately follows. This reasoning also covers the case where $T\backslash S=\varnothing$. Finally, assume that none of the sets $S\cap T$, $S\backslash T$, or $T\backslash S$ is the empty set. Using the notation in Theorem \ref{strong bound}, let $A=\sum_{i\in S\backslash T}A_i$, $B=\sum_{i\in S\cap T}A_i$, and $C=\sum_{i\in T\backslash S}A_i$. Using the relation $S\cup T=(S\backslash T)\cup(S\cap T)\cup(T\backslash S)$, which is a disjoint union, we get 
    \begin{equation*}
    \begin{split}
        A+B+C&=\sum_{i\in S\backslash T}A_i+\sum_{i\in S\cap T}A_i+\sum_{i\in T\backslash S}A_i=\sum_{i\in S\cup T}A_i,\\
        A+B&=\sum_{i\in S\backslash T}A_i+\sum_{i\in S\cap T}A_i=\sum_{i\in S}A_i,\\
        B+C&=\sum_{i\in T\backslash S}A_i+\sum_{i\in S\cap T}A_i=\sum_{i\in T}A_i.
    \end{split}
    \end{equation*}
    Now, combining the above three sums with Theorem \ref{strong bound} completes the proof.
\end{proof}

We will also look into the equality conditions of the fractional subadditive inequalities in the case when the dimension is $n=1$. There is a certain convention introduced in the paper \cite{meyer1} which can be used when the function under consideration is translation invariant. To begin with, we will give a simple example.

\begin{example}\label{translation_example}
Consider the first of the non-trivial fractional subadditive inequalities
\begin{equation*}
    c(A_1+A_2+A_3)\leq \frac{1}{2}c(A_1+A_2)+\frac{1}{2}c(A_1+A_3)+\frac{1}{2}c(A_2+A_3).
\end{equation*}
Suppose, for example, that the set $A_3$ contains only one point. Then using the translation invariance of the Schneider non-convexity index, the inequality can be simplified to
\begin{equation*}
    c(A_1+A_2)\leq \frac{1}{2}c(A_1+A_2)+\frac{1}{2}c(A_1)+\frac{1}{2}c(A_2).
\end{equation*}
We can simplify things even further to get
\begin{equation*}
    c(A_1+A_2)\leq c(A_1)+c(A_2),
\end{equation*}
which is a fractional sub-additive inequality on $3-1=2$ sets. 
\end{example}

The idea demonstrated in Example \ref{translation_example} holds in the general case, as is shown in \cite{meyer1} for volumes, but is also true for the Schneider non-convexity index since the proof only relies on translation invariance.
\begin{lemma}[\cite{meyer1}]\label{translated_partition}
    Let $m$ be a positive integer, let $(\mathcal{G},\beta)$ be a fractional partition of $[m]$, and let $A_1,\dots,A_m$ be non-empty, compact sets in $\mathbb{R}^n$. Suppose that exactly $k< m$ of the sets $A_1,\dots,A_m$ have $\textup{card}(A_i)=1$. Let $B_1,\dots,B_{m-k}$ denote those sets $A_i$ for which $\textup{card}(A_i)\geq2$. Then there exists a fractional partition $(\mathcal{G}^{*},\beta^{*})$ of $[m-k]$ such that 
    \begin{equation}\label{translated_equation}
        \sum_{T\in\mathcal{G}^{*}}\beta^{*}(T)c\left(\sum_{i\in T}B_i\right)=\sum_{S\in\mathcal{G}}\beta(S)c\left(\sum_{i\in S}A_i\right).
    \end{equation}
\end{lemma}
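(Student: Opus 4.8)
The plan is to exploit only the translation invariance of the Schneider non-convexity index, together with a purely combinatorial repackaging of the weights. First I would split the index set $[m]$ into the set $P$ of indices $i$ with $\textup{card}(A_i)=1$ (so $|P|=k$) and its complement $Q=[m]\setminus P$ (so $|Q|=m-k$), and fix the bijection $\phi:Q\to[m-k]$ that realizes the relabeling $B_j=A_{\phi^{-1}(j)}$. The essential observation is that for any $S\subseteq[m]$ the partial sum $\sum_{i\in S\cap P}A_i$ is a single point, so that by translation invariance
\begin{equation*}
    c\left(\sum_{i\in S}A_i\right)=c\left(\sum_{i\in S\cap Q}A_i\right)=c\left(\sum_{j\in\phi(S\cap Q)}B_j\right),
\end{equation*}
and in particular this quantity vanishes whenever $S\cap Q=\varnothing$, since then $\sum_{i\in S}A_i$ is a single point and hence convex.

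Next I would define the candidate partition by collapsing each edge $S$ to its trace $T_S:=\phi(S\cap Q)$ on the non-singleton indices. Concretely, set $\mathcal{G}^{*}:=\{\phi(S\cap Q):S\in\mathcal{G},\ S\cap Q\neq\varnothing\}$ and, for each $T\in\mathcal{G}^{*}$, define the pushed-forward weight
\begin{equation*}
    \beta^{*}(T):=\sum_{S\in\mathcal{G}:\,\phi(S\cap Q)=T}\beta(S).
\end{equation*}
I would first check that $\mathcal{G}^{*}$ is a legitimate nonempty hypergraph: since $k<m$ we have $Q\neq\varnothing$, and picking any $i\in Q$ the fractional partition constraint forces some $S\in\mathcal{G}$ with $i\in S$, whence $\phi(S\cap Q)\in\mathcal{G}^{*}$.

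The heart of the verification is to show $(\mathcal{G}^{*},\beta^{*})$ is a fractional partition of $[m-k]$. For $j\in[m-k]$ with preimage $i=\phi^{-1}(j)\in Q$, I would interchange the two sums and use that $j\in\phi(S\cap Q)$ is equivalent to $i\in S$ to obtain
\begin{equation*}
    \sum_{T\in\mathcal{G}^{*}:\,j\in T}\beta^{*}(T)=\sum_{S\in\mathcal{G}:\,i\in S}\beta(S)=1,
\end{equation*}
the last equality being the defining property of $(\mathcal{G},\beta)$ at the element $i$. Finally, the claimed identity (\ref{translated_equation}) follows by grouping the right-hand sum according to the common trace $T_S$: the terms with $S\cap Q=\varnothing$ drop out by the vanishing noted above, and the remaining terms regroup exactly into $\sum_{T\in\mathcal{G}^{*}}\beta^{*}(T)\,c(\sum_{j\in T}B_j)$.

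The argument is essentially bookkeeping, so the only genuine obstacle is organizing the relabeling and the pushforward of weights cleanly enough that the fractional-partition identity at each element $i\in Q$ transfers verbatim to its image $j=\phi(i)$. The mild subtlety worth flagging is that distinct edges $S$ may share a trace $T_S$, and that edges contained entirely in $P$ must be discarded; this is precisely what the summation defining $\beta^{*}$ accounts for, and it is the reason the weights must be aggregated rather than simply transported edge-by-edge.
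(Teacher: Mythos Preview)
Your argument is correct: the pushforward $T_S=\phi(S\cap Q)$ with aggregated weights $\beta^{*}(T)=\sum_{S:\phi(S\cap Q)=T}\beta(S)$ does give a fractional partition of $[m-k]$, and the identity follows by regrouping after the singleton-supported edges drop out via translation invariance.

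As for comparison, the paper does not supply its own proof of this lemma: it simply cites \cite{meyer1}, noting that the argument there (stated for volumes) carries over verbatim because it uses only translation invariance. Your write-up is precisely such an argument, so it is entirely in line with what the paper asserts about the cited proof; you have effectively reconstructed the omitted details.
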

\begin{remark}
The partition $(\mathcal{G}^{*},\beta^{*})$ in Lemma \ref{translated_partition} is referred to as the \textit{translated} partition. Let $A_1,\dots,A_m$ be sets in $\mathbb{R}^n$. Suppose that we want to find when (\ref{sub_additive_rational_assumption}) is equality. Using Lemma \ref{translated_partition}, we only need to consider the case where each set $A_i$ has $\textup{card}(A_i)\geq2$. For in the case that some of the sets $A_i$ contain a single point we have by translation invariance of $c$, and assuming (which we will prove) that $c$ is fractionally subadditive
\begin{equation*}
    c\left(\sum_{i=1}^{m}A_i\right)=c\left(\sum_{i=1}^{m-k}B_i\right)\leq\sum_{T\in\mathcal{G}^{*}}\beta^{*}(T)c\left(\sum_{i\in T}B_i\right)=\sum_{S\in\mathcal{G}}\beta(S)c\left(\sum_{i\in S}A_i\right).
\end{equation*}
It follows that the fractional subadditive inequality for $(\mathcal{G}^{*},\beta^{*})$ is equality if and only if the fractional subadditive inequality for $(\mathcal{G},\beta)$ is equality.
\end{remark}

\subsection{Proof of the fractional subadditivity}
\begin{theorem}
    Let $A_1,\dots,A_m$ be non-empty, compact sets in $\mathbb{R}^n$, and let $(\mathcal{G},\beta)$ be a fractional partition of $[m]$. Then
    \begin{equation*}  c\left(\sum_{i=1}^{m}A_i\right)\leq\sum_{S\in\mathcal{G}}\beta(S)c\left(\sum_{i\in S}A_i\right).
    \end{equation*}
    Moreover, in the case of dimension $n=1$, equality holds if and only if exactly one of the properties holds:
    \begin{enumerate}
        \item The translated partition is trivial (i.e $\mathcal{G^{*}}=[k]$ and $\beta^{*}([k])=1$).
        \item The translated partition is non-trivial and for each $S\in\mathcal{G}$ with $\beta(S)>0$, the set $\sum_{i\in S}A_i$ is an interval.
    \end{enumerate}
\end{theorem}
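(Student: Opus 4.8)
The plan is to prove the inequality first and then analyse the equality case in dimension one.

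For the inequality, I would begin with the reduction in Lemma \ref{rational_partitions}, so that it suffices to treat a rational partition: an enumeration $S_1,\dots,S_t$ of $\mathcal G$ (with repetition) covering each $i\in[m]$ in exactly $q$ of the $S_j$, with goal $q\,c(A_{[m]})\le\sum_{j=1}^t c(A_{S_j})$. The engine is Lemma \ref{strong_bound_rewrite}, applied iteratively: if a subfamily of the $S_j$ has union $[m]$, then building up that union one set at a time gives $c(A_{[m]})\le\max_j c(A_{S_j})$ over the subfamily. Consequently the subfamily of small sets, those with $c(A_{S_j})<c(A_{[m]})$, cannot cover $[m]$; choosing an index $i^\ast$ avoided by every small set, the $q$ sets that contain $i^\ast$ are all large, i.e. satisfy $c(A_{S_j})\ge c(A_{[m]})$. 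Hence at least $q$ summands are $\ge c(A_{[m]})$ and the rest are nonnegative, which yields the inequality. For a general real-weighted partition the same argument runs verbatim in weighted form: the large sets carry total weight at least $\sum_{S\ni i^\ast}\beta(S)=1$, so $\sum_S\beta(S)c(A_S)\ge c(A_{[m]})\sum_{S\text{ large}}\beta(S)\ge c(A_{[m]})$.

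For the equality statement I would first pass to the translated partition $(\mathcal G^\ast,\beta^\ast)$ via Lemma \ref{translated_partition} and the remark following it, reducing to the case in which every set has at least two points; since adjoining single-point sets only translates the relevant sumsets, this is also harmless for the interval condition in property (2), and the two properties are mutually exclusive, so it suffices to prove equality $\iff$ (1) or (2). The ``if'' direction is immediate: if the translated partition is trivial the right-hand side equals $c(A_{[k]})$, and if every positive-weight $A_S$ is an interval the right-hand side is $0$, whence the inequality just proved forces $c(A_{[k]})=0$. For ``only if'', assume equality with a non-trivial translated partition. If $c(A_{[k]})=0$, then $\sum_S\beta^\ast(S)c(A_S)=0$ forces every positive-weight $A_S$ to be an interval, giving property (2). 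The real work is to exclude $c(A_{[k]})=:v>0$: tracing the equalities in the weighted chain shows that every positive-weight small set is an interval, that the large sets have total weight exactly $1$ with $c(A_S)=v$ for each, and hence (since the sets through $i^\ast$ already account for weight $1$) that every positive-weight large set contains $i^\ast$ while every positive-weight set avoiding $i^\ast$ is an interval; non-triviality gives $\sum_S\beta^\ast(S)>1$ (because $\sum_S\beta^\ast(S)\lvert S\rvert=k$ with some positive-weight $S$ proper), so interval sets genuinely occur.

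The main obstacle is converting this structure into a contradiction using one-dimensional geometry. The key tool is that in $\mathbb R$ diameter is additive, $\mathrm{diam}(A+B)=\mathrm{diam}(A)+\mathrm{diam}(B)$, and that for $\ell>0$ one has $G(A+[0,\ell])=\max(G(A)-\ell,0)$, so adding a non-degenerate interval strictly decreases a positive index: $c(A+[0,\ell])<c(A)$ whenever $c(A)>0$. Taking a positive-weight interval set $S_0$ (so $A_{S_0}=[0,\ell_0]$ with $\ell_0=\mathrm{diam}(A_{S_0})>0$) and writing $A_{[k]}=A_{S_0}+A_{[k]\setminus S_0}$, these identities give $G(A_{[k]\setminus S_0})=G(A_{[k]})+\ell_0$ and $\mathrm{diam}(A_{[k]\setminus S_0})=\mathrm{diam}(A_{[k]})-\ell_0$, hence $c(A_{[k]\setminus S_0})>v$. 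I expect the contradiction to come from a careful accounting of the largest gap of $A_{[k]}$ relative to the common index $v$ of the blocks through $i^\ast$: the interval summand strictly lowers the normalized largest gap below $v$, which is incompatible with the tight equality forcing every large block to realize exactly $v$. Assembling this gap bookkeeping into a clean contradiction — controlling how the distinguished index $i^\ast$ interacts with the interval blocks that avoid it — is the technical heart of the equality case, and is where I would expect to spend most of the effort.
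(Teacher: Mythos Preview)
Your proof of the inequality is correct and is in fact a slight streamlining of the paper's argument. The paper orders the $S_j$ by $c(A_{S_j})$ and uses the combinatorial fact that any $r+1$ of the $S_j$ cover $[m]$; you instead observe directly that the ``small'' sets cannot cover, pick $i^\ast$ avoided by all of them, and use the fractional-partition identity $\sum_{S\ni i^\ast}\beta(S)=1$. Both routes rest on Lemma~\ref{strong_bound_rewrite}, and yours has the advantage of working verbatim for real weights.

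For the equality case, your structural analysis is right: when $v:=c(A_{[k]})>0$ and the translated partition is non-trivial, the positive-weight sets split into ``large'' ones (all containing $i^\ast$, each with $c(A_S)=v$, total weight exactly $1$) and ``small'' ones (each $A_S$ an interval). But the contradiction you sketch does not close. Writing $A_{[k]}=A_{S_0}+A_{[k]\setminus S_0}$ for a single small $S_0$ gives $c(A_{[k]\setminus S_0})>v$, yet $[k]\setminus S_0$ is not a set in the partition, so nothing in your setup bounds $c(A_{[k]\setminus S_0})$ from above by $v$. There is no lever here.

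The paper's move is in the opposite direction: rather than peeling an interval off $[k]$, it peels an interval off a \emph{large} set. Let $U$ be the union of all positive-weight small sets; by iterated Lemma~\ref{strong_bound_rewrite}, $c(A_U)\le\max_{S\text{ small}}c(A_S)=0$, so $A_U$ is itself an interval. A weight count shows there is a positive-weight large $T$ with $U\setminus T\neq\varnothing$ (otherwise every $i\in U$ lies in all large sets, of total weight $1$, plus a small set, giving $\sum_{S\ni i}\beta(S)>1$). Since $[k]=T\cup U$, one gets
\[
A_{[k]}=A_{T\setminus U}+\conv(A_{T\cap U})+\conv(A_{U\setminus T})=:B+I,
\]
where $A_T\subseteq B\subseteq\conv(A_T)$, hence $c(B)\le c(A_T)=v$, and $I=\conv(A_{U\setminus T})$ is a non-degenerate interval. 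Now your own observation that adding a non-degenerate interval strictly lowers a positive index yields $c(A_{[k]})<v$, the desired contradiction. This is the missing piece: work with the union of all small sets (whose sum is convex by Lemma~\ref{strong_bound_rewrite}) and decompose relative to a well-chosen large $T$, not relative to a single $S_0$.
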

\begin{proof}
By Lemma \ref{rational_partitions} we need to prove that
\begin{equation*}
    q\cdot c(A_{[m]})\leq\sum_{j=1}^{t}c(A_{S_j}),
\end{equation*}
where $q\geq1$ is an integer, the sets $\{S_j\}_{j=1}^{t}$ are an enumeration of the sets $S\in\mathcal{G}$, and each $i\in[m]$ belongs to exactly $q$ of the sets $S_j$. Then $t\geq q$. Otherwise, if $t<q$, each $i\in[m]$ belongs to fewer than $q$ sets $S_j$. Let $r\geq0$ be an integer such that $t=q+r$. It must be true that any $r+1$ of the sets $S_j$ must have a union that is equal to $[m]$. To see this, assume for contradiction that there exist sets $S_{j_1},\dots S_{j_{r+1}}$ and $i\in[m]$ such that 
\begin{equation*}
    i\notin S_{j_{1}}\cup\dots\cup S_{j_{r+1}}.
\end{equation*}
Then $i$ can belong to at most $q+r-(r+1)=q-1$ of the sets $S_j$, which is a contradiction. Now, suppose that the sets $S_j$ are arranged so that
\begin{equation*}
    c(A_{S_1})\geq c(A_{S_2})\geq\dots\geq c(A_{S_t}).
\end{equation*}
For $j\in[q-1]$ we have, by using induction on Lemma \ref{strong_bound_rewrite}
\begin{equation*}
    \begin{split}
        c(A_{[m]})&=c\left(\sum_{i\in S_j\cup[S_q\cup\dots\cup S_t]}A_i\right)\\
        &\leq\max\{c(A_{S_j}),c(A_{S_q}),\dots,c(A_{S_t})\}\\
        &=c(A_{S_j}).
    \end{split}
\end{equation*}
Using Lemma \ref{strong_bound_rewrite} again, we have
\begin{equation*}
    \begin{split}
        c(A_{[m]})&=c\left(\sum_{i\in S_q\cup\dots\cup S_{t}}A_i\right)\\
        &\leq\max\{c(A_{S_q}),\dots,c(A_{S_t})\}\\
        &\leq c(A_{S_q})+\dots+c(A_{S_t}).
    \end{split}
\end{equation*}
Now combine the above $q$ inequalities to get
\begin{equation*}
    q\cdot c(A_{[m]})\leq\sum_{j=1}^{q-1}c(A_{S_j})+\sum_{j=q}^{q+r}c(A_{S_j})=\sum_{j=1}^{t}c(A_{S_j}).
\end{equation*}
This completes the proof that $c$ is fractionally subadditive.

Now we will prove the equality conditions for $n=1$. Suppose that the fractional partition is non-trivial. By Lemma \ref{translated_partition} and translation invariance, we may assume that $\textup{card}(A_i)\geq2$ for each $i\in[m]$. We want to show that
\begin{equation*}
    c(A_{[m]})=\sum_{S\in\mathcal{G}}\beta(S)c(A_S)
\end{equation*}
if and only if each set $A_S$ is an interval, where $A_1,\dots,A_m$ each have $\text{card}(A_i)\geq2$. Since the weights are rational by assumption the equality can be written as
\begin{equation*}
    q\cdot c(A_{[m]})=\sum_{j=1}^{q+r}c(A_{S_j}),
\end{equation*}
where $r\geq1$ is an integer (if $r=0$, then the partition is trivial) and the sets $S_j$ are arranged such that 
\begin{equation*}
    c(A_{S_1})\geq c(A_{S_2})\geq\dots\geq c(A_{S_{q+r}}).
\end{equation*}
Recall that any $r+1$ of the sets $S_j$ must have a union that is equal to $[m]$. Then since we are assuming equality holds
\begin{equation*}
    \begin{split}
    c(A_{[m]})&=c\left(\sum_{S_q\cup S_{q+1}\cup\dots\cup S_{q+r}}A\right)\\
    &=\max\{c(A_{S_q}),c(A_{S_{q+1}}),\dots,c(A_{S_{q+r}})\}\\
    &=c(A_{S_q}).
    \end{split}
\end{equation*}
But by observing the proof of the inequality we must have
\begin{equation*}
    c(A_{[m]})=c(A_{S_q})+c(A_{S_{q+1}})+\dots+c(A_{S_{q+r}}).
\end{equation*}
Then
\begin{equation*}
    c(A_{S_{q+1}})=\dots=c(A_{S_{q+r}})=0.
\end{equation*}
It follows that each of the sets $S_{j}$, $q+1\leq j\leq q+r$ are intervals. Again, observing the proof of the inequality we have
\begin{equation*}
    c(A_{[m]})=c(A_{S_1})=\dots=c(A_{S_{q}}).
\end{equation*}
If $S_{q+1}\cup\dots\cup S_{q+r}=[m]$, then $c(A_{[m]})=0$ and it follows that $c(A_{S_j})=0$ for each $j\in[q+r]$ which proves what we need. Now, assume that $c(A_{[m]})>0$ and that $S_{q+1}\cup\dots\cup S_{q+r}\neq[m]$. We claim that there exists $j\in[q]$ such that $(S_{q+1}\cup\dots\cup S_{q+r})\backslash S_j\neq\varnothing$. If not, then for each $j\in[q]$ we have $S_{q+1}\cup\dots\cup S_{q+r}\subseteq S_j$, and some $i\in S_{q+1}\cup\dots\cup S_{q+r}$ belongs to at least $q+1$ sets $S_j$, which is a contradiction. Now, choose $j\in[q]$ such that $(S_{q+1}\cup\dots\cup S_{q+r})\backslash S_j\neq\varnothing$. Recall that $c(A_{S_j})=0$ for each $j\geq q+1$. It follows that $A_{S_{q+1}}+\dots+A_{S_{q+r}}=\text{conv}(A_{S_{q+1}}+\dots+A_{S_{q+r}})$. Then
\begin{equation*}
    \begin{split}
        A_{[m]}&=A_{S_j\cup(S_{q+1}\cup\dots\cup S_{q+r})}\\
        &=A_{S_j\backslash(S_{q+1}\cup\dots\cup S_{q+r})}+A_{S_{q+1}\cup\dots\cup S_{q+r}}\\
        &=A_{S_j\backslash(S_{q+1}\cup\dots\cup S_{q+r})}+\text{conv}(A_{S_{q+1}\cup\dots\cup S_{q+r}})\\
        &=A_{S_j\backslash(S_{q+1}\cup\dots\cup S_{q+r})}+\text{conv}(A_{S_j\cap(S_{q+1}\cup\dots\cup S_{q+r})})\\
        &+\text{conv}(A_{(S_{q+1}\cup\dots\cup A_{S_{q+r}})\backslash S_j}).
    \end{split}
\end{equation*}
Now, the set $A_{S_j}$ cannot be an interval since $c(A_{S_j})=c(A_{[m]})>0$. Then since $\text{conv}(A_{(S_{q+1}\cup\dots\cup A_{S_{q+r}})\backslash S_j})$ is an interval with positive length it follows that 
\begin{equation*}
    \begin{split}
        c(A_{[m]})&\leq c(A_{S_j}+\text{conv}(A_{(S_{q+1}\cup\dots\cup A_{S_{q+r}})\backslash S_j}))\\
        &<c(A_{S_j}).
    \end{split}
\end{equation*}
This is a contradiction. Therefore the assumption that $c(A_{[m]})>0$ is false. It follows that $c(A_{[m]})=0$ and that $c(A_{S_j})=0$ for each $j\in[q]$. Then each of the sets $A_{S_j}$ is an interval and the equality conditions are complete.
\end{proof}

\begin{remark}
    We have not been able to figure out the equality conditions when the dimension is $n\geq2$. The reason for this is that in general it is not true that if $A$ is compact and $B$ is convex (containing more than one point), then $c(A+B)<c(A)$. For example, if $A=([0,1]\cup[2,3])\times\{0\}$ and $B=\{0\}\times[0,1]$, then $c(A)=\frac{1}{3}$ and it can be verified that also $c(A+B)=\frac{1}{3}$.
\end{remark}

\section{Regions defined by set functions}
\subsection{The Schneider region}\label{schneider_region}
The Schneider region is defined in a similar way as the Lyusternik region.
\begin{definition}
    For positive integers $n,m$ recall that the set $\mathcal{K}_n(m)$ represents the collection of all $m$-tuples $A=(A_1,\dots,A_m)$ of non-empty, compact sets in $\mathbb{R}^n$. Analogous to the Lyusternik region, for a set $A\in \mathcal{K}_n(m)$ we define the function $c_A:2^{[m]}\rightarrow[0,\infty)$ by
    \begin{equation*}
        c_A(S):=c\left(\sum_{i\in S}A_i\right).
    \end{equation*}
    Then the $(n,m)$-Schneider region is defined by
    \begin{equation*}
        S_n(m):=\{c_A:A\in \mathcal{K}_n(m)\}.
    \end{equation*}
    Just as in the definition of the Lyusternik region, we can associate an element $c_A\in S_{n}(m)$ with a vector in $\mathbb{R}^{2^m}$ by first fixing an ordering $S_1,\dots,S_{2^m}$ of the sets in $2^{[m]}$, and then associating $c_A$ with the vector $(c_A(S_1),\dots,c_A(S_{2^m}))$.
\end{definition}

\begin{remark}\label{special_notation}
To make the characterization of $S_1(2)$ easier to state, we introduce some notation. For a set $S\subseteq[m]$ define
\begin{equation*}
    S_n(m,S):=\{c_A\in S_n(m): \textup{card}(A_i)\geq2 \textup{ iff } i\in S\}.
\end{equation*}
The set $S_n(m,S)$ represents the piece of $S_n(m)$ for which exactly the sets $A_i$ with $i\in S$ contain two or more points, and any other sets $A_i$ each consist of a single element. We will also use the following conventions for the upper and lower bounds on $c(A_1+A_2)$. Let $c_1,c_2\in[0,1]$. Define
\begin{equation*}
    \begin{split}
        L&:=L(c_1,c_2)=\max\left(0,\min_{\sigma\in S_2}\frac{1-r(c_{\sigma(1)})-2r(c_{\sigma(2)})}{3+r(c_{\sigma(1)})+2r(c_{\sigma(2)})}\right),\\
        M&:=M(c_1,c_2)=\max(c_1,c_2).
    \end{split}
\end{equation*}
\end{remark}

\subsubsection{The largest gap in a sumset}
We will need the fact that $G(A_1+A_2)\leq \max\{G(A_1),G(A_2)\}$, which follows from the definition. In the lemma below, we prove an even better upper bound for the bound on the largest gap in the sum of $k$ sets, which is much more than we need, but is interesting.
\begin{lemma}\label{largest_gap_large_sum_exact}
    Let $k\geq1$ be an integer, and let $A_1,\dots,A_k$ be non-empty, compact sets in $\mathbb{R}$ such that 
    \begin{equation*}
        G(A_1)\geq G(A_2)\geq\dots\geq G(A_k).
    \end{equation*}
    Then
    \begin{equation*}
        G\left(\sum_{j=1}^{k}A_j\right)\leq\max_{1\leq r\leq k}\left(G(A_{r})-\sum_{j=r+1}^{k}G(A_j)\right).
    \end{equation*}
    Moreover, this is the best upper bound in the sense that if $g_1\geq g_2\geq\dots\geq g_k\geq0$, then
    \begin{equation*}
        \max_{\substack{A\in\mathcal{K}_{1}(k)\\G(A_j)=g_j}}G\left(\sum_{j=1}^{k}A_j\right)=\max_{1\leq r\leq k}\left(g_r-\sum_{j=r+1}^{k}g_j\right).
    \end{equation*}
\end{lemma}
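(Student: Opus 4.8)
The plan is to separate the two assertions --- the inequality and its sharpness --- and to reduce the whole argument to a single asymmetric two-set estimate fed into a telescoping recursion on suffix sums.

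The building block I would prove first is: for nonempty compact $A,B\subseteq\mathbb{R}$,
\[
G(A+B)\le\max\{G(B),\,G(A)-\textup{diam}(B)\}.
\]
This comes straight from the definition of $G$. Setting $\varepsilon:=\max\{G(B),G(A)-\textup{diam}(B)\}\ge 0$, the inequality $\varepsilon\ge G(B)$ makes $B+[0,\varepsilon]$ a single interval $I$ of length $\textup{diam}(B)+\varepsilon\ge G(A)$; then $A+B+[0,\varepsilon]=A+I$ is $A$ thickened by an interval at least as long as its largest gap, so every gap of $A$ is filled and the result is convex. Hence $G(A+B)\le\varepsilon$. The point I want to stress is that this bound is \emph{asymmetric}: I convexify $B$ and pay $G(A)-\textup{diam}(B)$. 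The symmetric estimate $G(A+B)\le\max\{G(A)-G(B),G(B)\}$ (valid when $G(A)\ge G(B)$) does \emph{not} telescope, because $x\mapsto\max\{G(A)-x,x\}$ is not monotone, and a suffix can have a much smaller gap than its individual summands suggest; the whole difficulty is dodged by convexifying the growing suffix and absorbing $G(A_r)$ into the suffix \emph{diameter} rather than its gap.

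For the inequality I would then set $S_r:=A_r+\cdots+A_k$ and run the recursion. Since Minkowski sums satisfy $\textup{conv}(\sum A_j)=\sum\textup{conv}(A_j)$, diameters add, and $\textup{diam}(C)\ge G(C)$ always (a longest gap sits inside the convex hull), so $\textup{diam}(S_{r+1})=\sum_{j>r}\textup{diam}(A_j)\ge\sum_{j>r}g_j$ where $g_j:=G(A_j)$. Applying the building block with $A:=A_r$ and $B:=S_{r+1}$ gives
\[
G(S_r)\le\max\Bigl\{G(S_{r+1}),\,g_r-\textstyle\sum_{j>r}g_j\Bigr\}=\max\{G(S_{r+1}),M_r\},\qquad M_r:=g_r-\sum_{j=r+1}^{k}g_j.
\]
With the base case $G(S_k)=g_k=M_k$, unfolding yields $G(S_1)\le\max_{1\le r\le k}M_r$, which is exactly the claimed upper bound (note $\max_r M_r\ge M_k=g_k\ge0$, so no separate nonnegativity term is needed).

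For sharpness I would take the two-point sets $A_j:=\{0,g_j\}$ (so $A_j=\{0\}$ when $g_j=0$), which indeed satisfy $G(A_j)=g_j$; then $\sum_j A_j$ is the set $P$ of subset sums $\{\sum_{j\in T}g_j:T\subseteq[k]\}$. The general bound already gives $G(P)\le\max_r M_r$, so it suffices to exhibit one gap in $P$ of that length. Choosing $r^\ast$ with $M_{r^\ast}=\max_r M_r$ (and $M_{r^\ast}>0$, the only nontrivial case), I claim the interval $\bigl(\sum_{j>r^\ast}g_j,\;g_{r^\ast}\bigr)$ contains no subset sum: if $T\subseteq\{r^\ast+1,\dots,k\}$ then $\sum_{j\in T}g_j\le\sum_{j>r^\ast}g_j$, while if $T$ meets $\{1,\dots,r^\ast\}$ then, by the monotonicity $g_i\ge g_{r^\ast}$ for $i\le r^\ast$, we get $\sum_{j\in T}g_j\ge g_{r^\ast}$. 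Hence $P$ has a gap of length $g_{r^\ast}-\sum_{j>r^\ast}g_j=M_{r^\ast}$, giving $G(P)\ge\max_r M_r$ and therefore equality; when $\max_r M_r=0$ all $g_j$ vanish, $P=\{0\}$, and both sides are $0$. I expect the recursion design in the second paragraph --- realizing that one must convexify the suffix and use the diameter, not the gap, of $S_{r+1}$ --- to be the main obstacle; the two-point computation for sharpness is then routine.
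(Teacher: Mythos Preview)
Your proof is correct and follows the same overall skeleton as the paper's: induction/recursion on the number of summands via a two-set bound that compares $G(A_r)$ against the \emph{diameter} of the remaining suffix, followed by the same extremal choice $A_j=\{0,g_j\}$ for sharpness. The differences are in execution. For the inequality, the paper reduces $A_1$ to a finite set and analyzes gaps between successive translates of $A_2+\cdots+A_k$, whereas you prove the cleaner standalone estimate $G(A+B)\le\max\{G(B),\,G(A)-\textup{diam}(B)\}$ directly from the definition of $G$ (add $[0,\varepsilon]$ and watch both sets convexify); this avoids the finite-reduction step entirely. For sharpness, the paper argues by an iterative computation of $G(A_{k-1}+A_k)$, $G(A_{k-2}+A_{k-1}+A_k)$, \dots, keeping track of which portion of the sum determines the gap at each stage; your subset-sum argument is more direct, locating the maximal gap in one shot by the dichotomy on whether $T$ meets $\{1,\dots,r^\ast\}$. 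Both routes are short, but yours packages the key two-set step as a reusable lemma and gives a non-inductive sharpness proof.
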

\begin{proof}
    We will prove this result by induction on the number of sets $k$. By translation invariance, we may assume that $\min(A_i)=0$ for each $i\in[k]$. For the initial step, where $k=1$, we just observe that $G(A_1)=\max\{G(A_1)\}$. For the induction step, assume that the upper bound is true for sums of $k-1$ sets, and let $A_1,\dots,A_k$ be compact sets such that
    \begin{equation*}
        G(A_1)\geq\dots\geq G(A_k).
    \end{equation*}
    By induction we have
    \begin{equation*}
        G\left(\sum_{i=2}^{k}A_i\right)\leq\max_{2\leq r\leq k}\left(G(A_{r})-\sum_{j=r+1}^{k}G(A_j)\right).
    \end{equation*}
    We may assume that $\textup{card}(A_1)<\infty$. Otherwise, remove all but finitely many points from $A_1$ to achieve a new set $A_1^{\prime}$ such that $\textup{card}(A_1^{\prime})<\infty$ and $G(A_1)=G(A_1^{\prime})$. Then observe that $G(A_1+A_2)\leq G(A_1^{\prime}+A_2)$, so that if we prove the upper bound with $A_1^{\prime}$, we also have verified the upper bound with $A_1$. With this finite assumption on $A_1$, we may write $A_1=\{x_0,x_1,\dots,x_m\}$, where $x_0:=0<x_1<\dots<x_m:=a_1$. Then denoting $A:=A_2+\dots+A_k$, we have
    \begin{equation}\label{large_gap_sum}
        A_1+(A_2+\dots+A_k)=\bigcup_{j=0}^{m}(A+\{x_j\}).
    \end{equation}
    For each $i\in[k]$ define $a_i:=\max(A_i)$. It is enough, by (\ref{large_gap_sum}), to check the largest gap in each of the intervals $[x_{j-1},x_j+\sum_{i=2}^{k}a_i]$, for $j\in[m]$. Fix $j\in[m]$. If $x_j>x_{j-1}+\sum_{i=2}^{k}a_i$, then using the fact that $G(A_i)\leq a_i$, we calculate
    \begin{equation*}
    \begin{split}
        x_j-(x_{j-1}+\sum_{i=1}^{k}a_i)&=(x_j-x_{j-1})-\sum_{i=2}^{k}a_i\\
        &\leq G(A_1)-\sum_{i=2}^{k}a_i\\
        &\leq G(A_1)-\sum_{i=2}^{k}G(A_i).
    \end{split}
    \end{equation*}
    It follows that
    \begin{equation*}
       G\left((A_1+A)\cap[x_{j-1},x_j+\sum_{i=2}^{k}a_i]\right)\leq\max\left\{G(A),G(A_1)-\sum_{i=2}^{k}G(A_i)\right\}.
    \end{equation*}
    If $x_j\leq x_{j-1}+\sum_{i=2}^{k}a_i$, then 
    \begin{equation*}
         G\left((A_1+A)\cap[x_{j-1},x_j+\sum_{i=2}^{k}a_i]\right)\leq G(A).
    \end{equation*}
    This is true for any $j\in[m]$. Therefore
    \begin{equation*}
    \begin{split}
        G\left(\sum_{i=1}^{k}A_i\right)&=\max_{j\in[m]}G\left((A_1+A)\cap[x_{j-1},x_j+\sum_{i=2}^{k}a_i]\right)\\
        &\leq\max\left\{G(A),G\left(A_1\right)-\sum_{i=2}^{k}G(A_i)\right\}\\
        &\leq\max_{1\leq r\leq k}\left(G(A_{r})-\sum_{j=r+1}^{k}G(A_j)\right).
    \end{split}
    \end{equation*}
    This proves the upper bound. It remains to show that this is the best upper bound. To see this, let $g_1\geq g_2\geq\dots\geq g_k\geq0$ and define $A_j=\{0,g_j\}$. Suppose we first calculate $G(A_{k-1}+A_k)$. We have
    \begin{equation}\label{big_gap}
        A_{k-1}+A_k=\{0,g_{k},g_{k-1},g_{k-1}+g_{k}\}.
    \end{equation}
    By observation we see that
    \begin{equation*}
        G(A_{k-1}+A_k)=G((A_{k-1}+A_k)\cap[0,g_{k-1}])=\max\{g_k,g_{k-1}-g_k\}.
    \end{equation*}
    Notice that the largest gap comes from the first three terms in (\ref{big_gap}), and if we add terms between the third and fourth terms, the largest gap remains unchanged. Now, consider the set $A_{k-2}+A_{k-1}+A_k$. We can write
    \begin{equation*}
        A_{k-2}+A_{k-1}+A_k=(A_{k-1}+A_{k})\cup(\{g_{k-2}\}+(A_{k-1}+A_{k}).
    \end{equation*}
    If $g_{k-2}\leq g_{k-1}+g_k$, then we still take the maximum over the set $\{g_k,g_{k-1}-g_k\}$, since by the above observation, the new set will consist of two copies of $A_{k-1}+A_k$, which overlap somewhere in $[g_{k-1},g_{k-1}+g_k]$, therefore not disturbing the largest gap from the first sum, but also not adding any larger gaps. If $g_{k-2}>g_{k-1}+g_{k}$, then we have two disjoint copies of $A_{k-1}+A_k$ separated by a distance of $g_{k-2}-g_{k-1}-g_{k}$. It follows from this that
    \begin{equation*}
        G(A_{k-2}+A_{k-1}+A_{k})=\max\{g_k,g_{k-1}-g_k,g_{k-2}-g_{k-1}-g_{k}\}.
    \end{equation*}
    Also, we observe that we only need to use the points less than or equal to $g_{k-2}$ to achieve this. Continuing this process inductively, we see that after $t$ iterations the largest gap is
    \begin{equation*}
        G\left(\sum_{j=k-t}^{k}A_j\right)=\max_{k-t\leq r\leq k}\left(g_r-\sum_{j=r+1}^{k}g_j\right).
    \end{equation*}
    By setting $t=k-1$, we get the result.
\end{proof}

\begin{remark}
    Lemma \ref{largest_gap_large_sum_exact} can also be interpreted as the best upper bound for the Hausdorff distance to convex hull. For a compact set $A\subseteq \mathbb{R}^n$, define the Hausdorff distance between $A$ and $\textup{conv}(A)$ by 
    \begin{equation*}
        d(A):=\inf\{\lambda>0:\textup{conv}(A)\subseteq A+rB_{2}^n\},
    \end{equation*}
    where $B_2^n$ is the unit ball in $\mathbb{R}^n$ with respect to the $l_2$ norm. In dimension $n=1$, we will verify that $d(A)=\frac{1}{2}G(A)$. To see this, observe that since $A+\lambda[-1,1]+\lambda=A+\lambda[0,2]$, we have
    \begin{equation*}
        \begin{split}
            d(A)&=\inf\{\lambda>0:\textup{conv}(A)\subseteq A+\lambda[-1,1]\}\\
            &=\inf\{\lambda>0:A+\lambda[-1,1]\textup{ is convex }\}\\
            &=\inf\{\lambda>0:A+\lambda[0,2]\textup{ is convex }\}\\
            &=\frac{1}{2}\inf\{\lambda>0:A+\lambda[0,1]\textup{ is convex }\}=\frac{1}{2}G(A).
        \end{split}
    \end{equation*}
    This can be seen geometrically too. If $A$ is compact, then it is well known that 
    \begin{equation*}
        d(A)=\sup_{x\in\textup{conv}(A)}d(x,A),
    \end{equation*}
    where $d(x,A)$ is the smallest distance from $x$ to the set $A$. Intuitively, in dimension $n=1$ this supremum will be attained when the point $x$ lies exactly half way through the largest gap inside the set $A$, from which we get the above claim. Therefore another way to state Lemma \ref{largest_gap_large_sum_exact} is that if $d(A_1)\geq\dots\geq d(A_k)$, then
    \begin{equation*}
        d\left(\sum_{j=1}^{k}A_j\right)\leq\max_{1\leq r\leq k}\left(d(A_{r})-\sum_{j=r+1}^{k}d(A_j)\right).
    \end{equation*}
    This is the best upper bound in the same sense as given above.
\end{remark}

\subsubsection{Characterization for two sets in one-dimension}
We now give the characterization of $S_1(2)$. Refer to Remark \ref{special_notation} for notation.
\begin{theorem}\label{char_thm}
    In the case $n=1$, $m=2$ we have that 
    \begin{equation*}
        S_1(2)=S_1(2,\{1,2\})\cup S_1(2,\{1\})\cup S_1(2,\{2\})\cup S_1(2,\varnothing),
    \end{equation*}
    where
    \begin{enumerate}
        \item $S_1(2,\{1,2\})=\{(0,c_1,c_2,c_{12})\in\mathbb{R}^{4}_{+}: (c_1,c_2)\in[0,1]^{2}, c_{12}\in[L,M)\}$.
        \item $S_1(2,\{1\})=\{(0,c_1,0,c_{12})\in\mathbb{R}^{4}_{+}:c_1\in[0,1],c_{12}=c_1\}$.
        \item $S_1(2,\{2\})=\{(0,0,c_2,c_{12})\in\mathbb{R}^{4}_{+}:c_2\in[0,1],c_{12}=c_2\}$.
        \item $S_1(2,\varnothing)=\{(0,0,0,0)\}$.
    \end{enumerate}
\end{theorem}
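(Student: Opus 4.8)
The plan is to treat the claimed decomposition first and then the four pieces separately, with essentially all the difficulty concentrated in the piece $S_1(2,\{1,2\})$. The decomposition $S_1(2)=S_1(2,\{1,2\})\cup S_1(2,\{1\})\cup S_1(2,\{2\})\cup S_1(2,\varnothing)$ is immediate from the definition in Remark \ref{special_notation}, since every pair $A=(A_1,A_2)$ falls into exactly one class according to which of $A_1,A_2$ has at least two points. The three degenerate pieces are handled by translation invariance of $c$: if $A_2=\{p\}$ is a singleton, then $A_1+A_2$ is a translate of $A_1$, so $c(A_1+A_2)=c(A_1)=c_1$ and $c(A_2)=0$, which gives the description of $S_1(2,\{1\})$; the case $S_1(2,\{2\})$ is symmetric, and $S_1(2,\varnothing)$ gives the single point $(0,0,0,0)$. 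To see that $c_1$ ranges over all of $[0,1]$ here, I would use the balanced sets $[0,r(c_1)]\cup[1,1+r(c_1)]$ of Lemma \ref{LR-set}, which have index exactly $c_1$ and at least two points for every $c_1\in[0,1]$ (with $c_1=0$ giving an interval and $c_1=1$ giving two points).

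For the main piece $S_1(2,\{1,2\})$ there are three things to establish: the range of $(c_1,c_2)$, the lower bound $c_{12}\ge L$, and the strict upper bound $c_{12}<M$. The range of $(c_1,c_2)$ is all of $[0,1]^2$: the inclusion $\subseteq$ is Schneider's Theorem \ref{Shneider_Thm} in dimension one, and $\supseteq$ again follows by choosing $A_1,A_2$ to be balanced sets of the prescribed indices. The lower bound $c_{12}\ge L$ is exactly Theorem \ref{Lower_bound}. The key new ingredient is the strict upper bound, which I would isolate as the statement: \emph{if $A,B\subseteq\mathbb{R}$ are compact with at least two points each and $\max(c(A),c(B))>0$, then $c(A+B)<\max(c(A),c(B))$.} The proof uses the one-dimensional formula $c=G/\textup{diam}$ together with the two facts $G(A+B)\le\max\{G(A),G(B)\}$ and $\textup{diam}(A+B)=\textup{diam}(A)+\textup{diam}(B)$. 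Writing $g_A=G(A)$, $d_A=\textup{diam}(A)$, and similarly for $B$, and assuming without loss $c(A)\ge c(B)$ with $g_A>0$, one bounds $c(A+B)\le \max(g_A,g_B)/(d_A+d_B)$ and then checks in the two subcases $g_A\ge g_B$ and $g_B>g_A$ that this is strictly less than $g_A/d_A$; the decisive point is that $d_B>0$ because $B$ has at least two points, which is precisely where the hypothesis $\textup{card}(B)\ge 2$ enters.

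It remains to prove achievability, namely that for fixed $c_1,c_2\in[0,1]$ the value $c(A_1+A_2)$ sweeps out the entire interval $[L,M)$; this is the part I expect to be the main obstacle. My plan is to exhibit a single continuous one-parameter family realizing every value. Fix balanced sets $B_i=[0,r(c_i)]\cup[1,1+r(c_i)]$ with $c(B_i)=c_i$, put $A_1=B_1$ and $A_2^{(t)}=tB_2$ for $t\in(0,\infty)$, and set $\phi(t):=c(A_1+A_2^{(t)})$. By scale invariance of $c$ we have $c(A_2^{(t)})=c_2$ for all $t$, so the whole family has the prescribed indices. I would then verify the following about $\phi$: it is continuous, because for balanced summands the gap candidates from the proof of Theorem \ref{Lower_bound} are affine in $t$ and $\textup{diam}$ is affine and positive, so $\phi=G/\textup{diam}$ is a ratio of a piecewise-affine function and a positive affine function; $\lim_{t\to0^+}\phi(t)=c_1$ and, using $c(A_1+tB_2)=c(t^{-1}A_1+B_2)$, $\lim_{t\to\infty}\phi(t)=c_2$; the minimum of $\phi$ equals $L$ and is attained, since the two extremal configurations of Theorem \ref{Lower_bound} are exactly $B_1+2B_2=A_1+A_2^{(2)}$ and $2B_1+B_2$ (whose index equals $\phi(1/2)$ by scale invariance), while Theorem \ref{Lower_bound} forces $\phi\ge L$ everywhere; and finally $\phi<M$ on all of $(0,\infty)$ by the strict upper bound above. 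Since the continuous image of the connected set $(0,\infty)$ is an interval containing its attained minimum $L$, with supremum $M$ approached at whichever endpoint gives the larger of $c_1,c_2$ but never attained, it must equal $[L,M)$, completing the argument.

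One boundary remark I would include: when $c_1=c_2=0$ both summands are intervals, so $c_{12}=0$ is forced and the realized set is the single point $\{0\}$ rather than the empty interval $[0,0)$; this lone point $(0,0,0,0)$ is already accounted for in the union through $S_1(2,\varnothing)$, so the characterization of $S_1(2)$ as a whole is unaffected. Apart from this degenerate case, the description of each piece holds verbatim, and the strict upper bound moreover guarantees $L<M$ whenever $M>0$, so the interval $[L,M)$ is genuinely nonempty there.
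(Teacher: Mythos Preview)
Your proof is correct and follows essentially the same strategy as the paper: the degenerate pieces via translation invariance, the lower bound by Theorem~\ref{Lower_bound}, the strict upper bound via $c=G/\textup{diam}$ together with $G(A_1+A_2)\le\max\{G(A_1),G(A_2)\}$ and $\textup{diam}(A_1+A_2)=\textup{diam}(A_1)+\textup{diam}(A_2)$, and achievability through the one-parameter family $B_1+tB_2$ of balanced sets.

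The one noteworthy difference is in the achievability step. The paper restricts to $m\in[2,\infty)$, computes the largest gap of $A_{\sigma(1)}+mA_{\sigma(2)}$ explicitly, checks that the resulting rational function of $m$ is monotone, and reads off the image $[L_\sigma,c_{\sigma(2)})$ for each permutation $\sigma$; it then takes the union over $\sigma\in S_2$ to obtain $[L,M)$. You instead run $t$ over all of $(0,\infty)$, observe that $\phi(t)\to c_1$ as $t\to 0^+$ and $\phi(t)\to c_2$ as $t\to\infty$, and invoke continuity plus the intermediate value theorem. Your route avoids the explicit monotonicity computation and the need to glue the two permutations, at the cost of having to argue continuity of $\phi$ (which is routine, since the sumset of two two-interval sets has endpoints affine in $t$, so $G$ is piecewise affine and $\textup{diam}$ is affine and positive). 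Both arguments are sound; yours is a little softer and the paper's a little more hands-on.

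Your boundary remark about $c_1=c_2=0$ is a genuine observation: in that case $M=0$, the strict inequality in the upper bound degenerates to $0<0$, and the stated description $c_{12}\in[L,M)=[0,0)$ is empty even though $(0,0,0,0)$ is realized by two intervals of positive length. The paper's proof does not single this case out. As you note, the overall description of $S_1(2)$ is unaffected because $(0,0,0,0)$ already appears in $S_1(2,\varnothing)$, but strictly speaking the characterization of the piece $S_1(2,\{1,2\})$ misses this one point.
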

\begin{proof}
    The characterization of $S_{1}(2,\varnothing)$ follows from the fact that the only sets being considered are point sets. The characterization of $S_{1}(2,\{1\})$ follows from the fact that if $A_1$, $A_2$ are sets in $\mathbb{R}$ such that $\textup{card}(A_2)=1$, then $c(A_1+A_2)=c(A_1)$. By the same reasoning we can verify the characterization for $S_{1}(2,\{2\})$. It remains to verify the characterization for $S_1(2,\{1,2\})$. To prove this, let $c_A\in S_1(2,\{1,2\})$. Then $A=(A_1,A_2)$ is a pair of nonempty, compact sets which contain two or more points each. Define $c_1:=c(A_1)$ and $c_2:=c(A_2)$. By Theorem \ref{Lower_bound}, $c(A_1+A_2)\geq L$. To verify the upper bound we use Lemma \ref{largest_gap_large_sum_exact} and the fact that $|\textup{conv}(A_i)|>0$ to get
    \begin{equation*}
        c(A_1+A_2)\leq\frac{\max\left\{G(A_1),G(A_2)\right\}}{|\textup{conv}(A_1)|+|\textup{conv}(A_2)|}<\max\left\{\frac{G(A_1)}{|\textup{conv}(A_1)|},\frac{G(A_2)}{|\textup{conv}(A_2)|}\right\}=:M.
    \end{equation*}
    Hence, $c(A_1+A_2)\in [L,M)$. Now, let $(0,c_1,c_2,c_{12})\in \mathbb{R}^{4}_{+}$ such that $(c_1,c_2)\in[0,1]^{2}$, and $c_{12}\in [L,M)$. We will show that there exist compact sets $A_1, A_2$, which are not point sets, and $c(A_1)=c_1$, $c(A_2)=c_2$, $c(A_1+A_2)=c_{12}$. For $j\in[2]$ and $\sigma\in S_2$, set $A_{\sigma(j)}=[0,r(c_{\sigma(j)})]\cup[1,1+r(c_{\sigma(j)})]$. Then $c(A_{\sigma(j)})=c_{\sigma(j)}$. First, choose $\sigma_{*}\in S_2$ such that 
    \begin{equation*}
        \frac{1-r(c_{\sigma_{*}(1)})-2r(c_{\sigma_{*}(2)})}{3+r(c_{\sigma_{*}(1)})+2r(c_{\sigma_{*}(2)})}=\min_{\sigma\in S_2}\frac{1-r(c_{\sigma(1)})-2r(c_{\sigma(2)})}{3+r(c_{\sigma(1)})+2r(c_{\sigma(2)})}.
    \end{equation*}
    If $m\in[2,\infty)$, then the sumset $A_{\sigma_{*}(1)}+mA_{\sigma_{*}(2)}$ has as its largest gap
    \begin{equation*}
        G=m-1-r(c_{\sigma_{*}(1)})-mr(c_{\sigma_{*}(2)}).
    \end{equation*}
    Then for $m\in[2,\infty)$, the convexity index of the above sumset is
    \begin{equation*}
        c(A_{\sigma_{*}(1)}+mA_{\sigma_{*}(2)})=\frac{m-1-r(c_{\sigma_{*}(1)})-mr(c_{\sigma_{*}(2)})}{1+m+r(c_{\sigma_{*}(1)})+mr(c_{\sigma_{*}(2)})}.
    \end{equation*}
    The above rational function is non-decreasing and continuous as a function of $m$. Also
    \begin{equation*}
        \lim_{m\rightarrow\infty}\frac{m-1-r(c_{\sigma_{*}(1)})-mr(c_{\sigma_{*}(2)})}{1+m+r(c_{\sigma_{*}(1)})+mr(c_{\sigma_{*}(2)})}=\frac{1-r(c_{\sigma_{*}(2)})}{1+r(c_{\sigma_{*}(2)})}=c_{\sigma_{*}(2)}.
    \end{equation*}
    Setting $f_{*}(m):=c(A_{\sigma_{*}(1)}+mA_{\sigma_{*}(2)})$, it follows that 
    \begin{equation*}
        f_{*}\left([2,\infty)\right)=\left[\frac{1-r(c_{\sigma_{*}(1)})-2r(c_{\sigma_{*}(2)})}{3+r(c_{\sigma_{*}(1)})+2r(c_{\sigma_{*}(2)})},c_{\sigma_{*}(2)}\right)=[L,c_{\sigma_{*}(2)}).
    \end{equation*}
    It may not be true that $c_{\sigma_{*}(2)}=\max\{c_1,c_2\}$. If that is the case, then choose $\sigma^{*}\in S_2$ such that $c_{\sigma^{*}(2)}=\max\{c_1,c_2\}$. Repeating the same computations that were done for $\sigma_{*}$, we find, setting $f^{*}(m):=c(A_{\sigma^{*}(1)}+mA_{\sigma^{*}(2)})$ that
    \begin{equation*}
        f^{*}\left([2,\infty)\right)=\left[\frac{1-r(c_{\sigma^{*}(1)})-2r(c_{\sigma^{*}(2)})}{3+r(c_{\sigma^{*}(1)})+2r(c_{\sigma^{*}(2)})},c_{\sigma^{*}(2)}\right)=\left[\frac{1-r(c_{\sigma^{*}(1)})-2r(c_{\sigma^{*}(2)})}{3+r(c_{\sigma^{*}(1)})+2r(c_{\sigma^{*}(2)})},M\right).
    \end{equation*}
    Then $f_{*}([2,\infty))\cup f^{*}([2,\infty))=[L,M)$. This completes the proof.
\end{proof}
\begin{remark}
    We note here that, while in general finding a complete characterization of the region $S_n(m)$ for any $n,m$ looks extremely complicated, it would be interesting to achieve a characterization of the region $S_n(2)$, for $n\geq2$. The main difficulty that has prevented us from achieving this is that we do not know the generalization of Theorem \ref{Lower_bound} for arbitrary compact sets in $\mathbb{R}^n$.
\end{remark}

\subsubsection{Two sets in high dimension}
As noted in the remark in the previous section, we do not know the characterization of $S_n(2)$ when $n\geq2$. But, we can give the characterization for a piece of this region.
\begin{proposition}\label{partial_char}
    Let $n\geq2$. Then
    \begin{equation*}
        \{(0,c_1,c_2,c_{12})\in\mathbb{R}^{4}_+:(c_1,c_2)\in[0,1]^2,c_{12}\in[0,\max(c_1,c_2)]\}\subseteq S_n(2).
    \end{equation*}
\end{proposition}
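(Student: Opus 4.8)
The plan is to realize every point of the stated set by an explicit pair of Cartesian product sets, thereby reducing the whole problem to the one-dimensional constructions already in hand together with the product formula for the index from Section \ref{cartesian_bound}. Since the set in the statement is symmetric under $c_1\leftrightarrow c_2$ (swapping $A_1$ and $A_2$ leaves $c_{12}$ unchanged), I may assume $c_1\geq c_2$, so that $M:=\max(c_1,c_2)=c_1$. Fix a target vector $(0,c_1,c_2,c_{12})$ with $(c_1,c_2)\in[0,1]^2$ and $c_{12}\in[0,c_1]$. I will build $A_1,A_2\subseteq\mathbb{R}^n$ as products whose first two coordinate pairs do all the work and whose remaining $n-2$ coordinates are single points. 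The facts I use repeatedly are that for product sets $A_j=\prod_{i=1}^n A_{ji}$ one has $c(A_1+A_2)=\max_i c(A_{1i}+A_{2i})$, which is exactly formula (\ref{cartesian_product_index}), and $c(A_j)=\max_i c(A_{ji})$, which follows from the same observation that $A_j+\lambda\,\textup{conv}(A_j)$ factors as a product that is convex if and only if each factor is convex.

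In the first coordinate I will produce the value $c_{12}$ exactly. Let $r_1=r(c_1)$ and set $B_1=[0,r_1]\cup[1,1+r_1]$, so $c(B_1)=c_1$, and let $B_2=[0,t]$ with $t\in[0,1-r_1]$. Then $B_1+B_2=[0,r_1+t]\cup[1,1+r_1+t]$, so $c(B_1+B_2)=\frac{1-r_1-t}{1+r_1+t}$ whenever $r_1+t<1$ (and the value is $0$ once $r_1+t\geq 1$). This expression is continuous and strictly decreasing in $t$, equal to $c_1$ at $t=0$ and to $0$ at $t=1-r_1$; by the intermediate value theorem there is a $t$ with $c(B_1+B_2)=c_{12}$, precisely because $c_{12}\in[0,c_1]=[0,M]$.

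In the second coordinate I will carry the index $c_2$ while contributing nothing to the sumset. Let $r_2=r(c_2)$, set $C_2=[0,r_2]\cup[1,1+r_2]$ so that $c(C_2)=c_2$, and set $C_1=[0,1-r_2]$, an interval, so $c(C_1)=0$. Then $C_1+C_2=[0,1]\cup[1,2]=[0,2]$ is convex, whence $c(C_1+C_2)=0$; this is the gap-filling mechanism of Lemma \ref{filling_gaps}, used here to annihilate the second coordinate's contribution to the sum while leaving its individual index untouched. Finally I assemble $A_1=B_1\times C_1\times\{0\}^{\,n-2}$ and $A_2=B_2\times C_2\times\{0\}^{\,n-2}$, where $\{0\}^{\,n-2}$ denotes $n-2$ single-point factors. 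Those padding factors contribute index $0$ in every term, so the product formulas give $c(A_1)=\max\{c_1,0\}=c_1$, $c(A_2)=\max\{0,c_2\}=c_2$, and $c(A_1+A_2)=\max\{c_{12},0\}=c_{12}$, as required.

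I do not expect a genuine obstacle in this argument; the only point that needs care is that the second coordinate must simultaneously realize the possibly large index $c_2$ and yet yield a convex (index $0$) sum, which is exactly what gap-filling by the interval $C_1$ permits. Thus there is no tension between the constraints $c(A_2)=c_2$ and $c(A_1+A_2)=c_{12}$ even when $c_{12}<c_2$, and the reduction to two one-dimensional problems via the product formula closes the proof.
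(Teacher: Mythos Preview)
Your proof is correct but follows a genuinely different route from the paper. The paper works with annuli in $\mathbb{R}^2$: it sets $A_1=A(c_1,1)$ and $A_2=A(c_2,1)$ (where $A(r,R)$ is the closed annulus of radii $r\leq R$), observes that $A_1+mA_2=A(c_1-m,1+m)$ for $m\in(0,c_1]$, and varies $m$ to sweep $c(A_1+mA_2)=\frac{c_1-m}{1+m}$ through $[0,c_1)$; the endpoint $c_{12}=\max(c_1,c_2)$ is then handled by a separate construction $A_1'=A_1\times\{0\}$, $A_2'=\{0\}\times A_2$ with one-dimensional $A_1,A_2$. Higher dimensions are obtained by padding with zero coordinates.

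Your approach instead stays entirely within Cartesian products of one-dimensional sets, exploiting the product formula $c\big(\prod_i E_i\big)=\max_i c(E_i)$ (and its sumset version, equation~(\ref{cartesian_product_index})) to decouple the roles of the two coordinates: the first coordinate realizes $c_{12}$ exactly via $B_1+[0,t]$, while the second coordinate carries $c_2$ but contributes zero to the sum because $C_1$ is chosen to fill the gap of $C_2$. This is more elementary in that all computations are one-dimensional, and it handles the endpoint $c_{12}=\max(c_1,c_2)$ uniformly (take $t=0$) rather than by a separate case. The paper's annulus construction, on the other hand, is a genuinely two-dimensional example and hints that non-product sets can also realize these vectors, which may be conceptually useful for the open Problem~\ref{high_dim_bound}.
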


\begin{proof}
    It is enough to prove this for $n=2$. For the case where $n>2$, use the fact that $S_n(2)\subseteq S_{n+1}(2)$. This can be verified by using the fact that for a compact set $A\subseteq\mathbb{R}^n$, $c(A\times\{0\})=c(A)$. Now, for $0\leq r\leq R$, define
    \begin{equation*}
        A(r,R):=\{x\in\mathbb{R}^2:r\leq |x|\leq R\}
    \end{equation*}
    to be the annulus with smaller radius $r$ and larger radius $R$. Assume without loss of generality that $c_1=\max(c_1,c_2)$, and that $c_1>0$ (If $c_1=0$, then we could define $A_1$ and $A_2$ to be each a single point). Define $A_1:=A(c_1,1)$ and $A_2:=A(c_2,1)$. Then $c(A_1)=c_1$ and for any $m>0$, $c(mA_2)=c_2$. In particular, let $m\in(0,c_1]$. Observe that $A_1+mA_2=A(c_1-m,1+m)$ (this is a similar computation to adding two one dimensional sets together). Then $c(A_1+mA_2)=\frac{c_1-m}{1+m}$. Since this is a continuous, decreasing function of $m$, we see that as $m$ varies in the interval $(0,c_1]$, $c(A_1+mA_2)$ can take any value in the interval $[0,c_1)$. All that remains is to show that the maximum can be attained. Let $A_1$ and $A_2$ be any sets in $\mathbb{R}$ with convexity indices $c_1$ and $c_2$ respectively. Set $A_1^{\prime}=A_1\times\{0\}$ and $A_2^{\prime}=\{0\}\times A_2$. Then $c(A_j^{\prime})=c_j$ for each $j\in[2]$. The sumset is $A_1^{\prime}+A_2^{\prime}=A_1\times A_2$. Then $c(A_1^{\prime}+A_2^{\prime})=\max\{c(A_1),c(A_2)\}$, which shows the maximum convexity index is attained. This proves the proposition.
\end{proof}

\begin{remark}
    We can use Proposition \ref{partial_char} to get a partial answer to Problem \ref{high_dim_bound}. In fact, by the above proposition if $c_1,c_2\in[0,1]$ and $n\geq2$, then $F_n(c_1,c_2)=0$. 
\end{remark}

\subsection{Closure properties of the Lyusternik region}\label{closure_properties}
\subsubsection{The construction of a fractal set}
We first need to construct a fractal set.
\begin{definition}
    Let $I$ be some interval of length $|I|>0$, let $N\geq3$ be an integer, and choose $k\in\{0,\dots,N-1\}$. Denote $m:=\min(I)$, and $I_j:=[m+j|I|N^{-1},m+(j+1)|I|N^{-1}]$. Then
    \begin{equation*}
        I=I_0\cup I_1\cup\dots\cup I_{N-1}.
    \end{equation*}
    Define $F_k(I):=I_{0}\cup\dots\cup I_{k}$. That is, we divide the interval $I$ into exactly $N$ sub-intervals of equal length, and keep only the first $k+1$ sub-intervals. We will define the fractal set $C(N,k)$ inductively as follows.
    \begin{enumerate}
        \item Let $C_1:=F_k([0,1])$. Then we can write
        \begin{equation*}
            C_1=I_{0}\cup I_{1}\cup\dots\cup I_{k},
        \end{equation*}
        where each interval $I_{t}$ has length $\frac{1}{N}$.
        \item For $j=2$ define $C_2:=F_k(I_0)\cup\dots\cup F_k(I_k)$. That is,
        \begin{equation*}
            C_2=\bigcup_{\alpha \in S_2}I_{2,\alpha},
        \end{equation*}
        where each interval $I_{2,\alpha}$ has length $\frac{1}{N^{2}}$.
        \item For $j-1\geq2$, suppose that we have constructed $C_{j-1}$ so that 
        \begin{equation*}
            C_{j-1}=\bigcup_{\alpha\in S_{j-1}}I_{j-1,\alpha},
        \end{equation*}
        where the set $S_{j-1}$ is some indexing set, and the intervals $I_{j-1,\alpha}$ each have length $\frac{1}{N^{j-1}}$. Define
        \begin{equation*}
            C_j:=\bigcup_{\alpha\in S_{j-1}}F_k(I_{j-1,\alpha})=\bigcup_{\alpha\in S_j}I_{j,\alpha}
        \end{equation*}
        so that the intervals $I_{j,\alpha}$ each have length $\frac{1}{N^{j}}$. 
    \end{enumerate}
    With this inductive construction, we can define
    \begin{equation*}
        C(N,k):=\bigcap_{j=1}^{\infty}C_j.
    \end{equation*}
\end{definition}
We now show that the set $C(N,k)$ can be written as the subset of $[0,1]$ which consists of exactly those numbers possessing a base $N$ representation that only allows the digits $0,\dots,k$.
\begin{lemma}
    The set $C(N,k)$ is a compact set and has the representation
    \begin{equation}\label{nice_representation}
        C(N,k)=\left\{\sum_{j=1}^{\infty}\frac{x_j}{N^j}:x_j\in\{0,1,\dots,k\}\right\}.
    \end{equation}
\end{lemma}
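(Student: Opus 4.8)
The plan is to treat the two assertions separately. Compactness is immediate and I would dispose of it first: each $C_j$ is a finite union of closed subintervals of $[0,1]$, hence compact, and the construction gives a nested chain $C_1\supseteq C_2\supseteq\cdots$ of nonempty sets, so $C(N,k)=\bigcap_{j\geq1}C_j$ is a nested intersection of nonempty compact sets, and is therefore itself compact (and nonempty). The representation formula is the real content.

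For the representation I would set up prefix notation. For a finite digit string $(a_1,\dots,a_j)$ with each $a_i\in\{0,\dots,N-1\}$, write $I_{a_1\cdots a_j}:=\left[\sum_{i=1}^{j}a_iN^{-i},\,\sum_{i=1}^{j}a_iN^{-i}+N^{-j}\right]$, the set of all $x\in[0,1]$ possessing a base-$N$ expansion that begins with $a_1,\dots,a_j$. The key structural step is to prove by induction on $j$ that $C_j=\bigcup_{(a_1,\dots,a_j)\in\{0,\dots,k\}^{j}}I_{a_1\cdots a_j}$. The base case $j=1$ is precisely the definition of $C_1$, and the inductive step uses the fact that applying $F_k$ to an interval of length $N^{-(j-1)}$ keeps exactly its first $k+1$ subintervals of length $N^{-j}$, which corresponds to appending a digit $a_j\in\{0,\dots,k\}$ to the prefix. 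Denoting by $E$ the right-hand side of the claimed formula, the inclusion $E\subseteq C(N,k)$ is then a direct geometric-series estimate: if $x=\sum_{i\geq1}x_iN^{-i}$ with every $x_i\leq k$, then $\sum_{i=1}^{j}x_iN^{-i}\leq x\leq \sum_{i=1}^{j}x_iN^{-i}+N^{-j}$, so $x\in I_{x_1\cdots x_j}\subseteq C_j$ for all $j$, whence $x\in\bigcap_j C_j$.

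The reverse inclusion $C(N,k)\subseteq E$ is where I expect the one genuine difficulty, and it comes from the non-uniqueness of base-$N$ expansions: for a point $x$ sitting on an interval boundary I cannot simply read off a surviving digit greedily, since a careless early choice can lead into a dead end. (For instance, with $N=3$, $k=1$, the point $x=1/3$ lies in both $I_0$ and $I_1$, yet only the prefix $1$ extends to every deeper level.) To organize the necessary choices consistently I would build the tree whose level-$j$ nodes are the prefixes $(a_1,\dots,a_j)\in\{0,\dots,k\}^{j}$ with $x\in I_{a_1\cdots a_j}$, with parent given by truncation; this is genuinely a tree because $I_{a_1\cdots a_{j+1}}\subseteq I_{a_1\cdots a_j}$. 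Membership $x\in C_j$ makes every level nonempty, and each node has at most $k+1$ children, so the tree is infinite and finitely branching. By K\"onig's lemma it has an infinite branch $(x_i)_{i\geq1}$, giving $x\in I_{x_1\cdots x_j}$ for all $j$; since these intervals have length $N^{-j}\to0$ they pin down $x=\sum_{i\geq1}x_iN^{-i}$ with each $x_i\in\{0,\dots,k\}$, i.e. $x\in E$. Equivalently, one may note that $E$ is the image of the compact product space $\{0,\dots,k\}^{\mathbb{N}}$ under the continuous summation map and extract a convergent subsequence of prefixes; K\"onig's lemma is just the combinatorial packaging of this same compactness, and I would use whichever phrasing reads more cleanly.
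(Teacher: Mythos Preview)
Your proof is correct and follows the same overall outline as the paper's: establish compactness, identify each $C_j$ with the union of the $N$-adic intervals indexed by prefixes in $\{0,\dots,k\}^j$, and verify both inclusions. The genuine difference is in the inclusion $C(N,k)\subseteq E$. The paper proceeds by greedy digit extraction: at each stage it asserts that $x\in F_k(J_r)$ for the previously chosen level-$r$ interval $J_r$ and then picks a valid next digit. You correctly worry that this step is delicate at shared endpoints (your example $x=1/3$, $N=3$, $k=1$ is precisely where an unlucky greedy choice dead-ends), and you repair it with K\"onig's lemma on the tree of admissible prefixes; the paper does not address this point explicitly, so your version is the more careful one. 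For the inclusion $E\subseteq C(N,k)$ the paper takes a slightly different tack, noting that each partial sum $\sum_{i\le r}x_iN^{-i}$ is a left endpoint of a surviving interval and hence lies in $C(N,k)$, then passing to the limit via closedness; your direct containment $x\in I_{x_1\cdots x_j}\subseteq C_j$ achieves the same end with comparable effort.
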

\begin{proof}
    Since $C(N,k)$ is the intersection of closed sets, it is a closed set. Therefore $C(N,k)$ is a closed and bounded subset of $\mathbb{R}$, so it is compact. Denote the right side of (\ref{nice_representation}) by $S$. We want to prove that $C(N,k)=S$. Let $x\in C(N,k)$. Then $x\in C_j$ for each $j\in\mathbb{N}$. In particular, $x\in C_1=\bigcup_{j=0}^{k}I_j$. Then there exists $x_1\in\{0,\dots,k\}$ such that $x\in J_1:=I_{x_1}$. Denote the left endpoint of $J_1$ by $L_1$. Since $x\in I_{x_1}$, we have $|L_1-x|\leq\frac{1}{N}$. Now, suppose by induction that we have found integers $x_1,\dots,x_r\in\{0,\dots,k\}$ and intervals $J_{j}:=I_{j,x_j}\subset C_j$ for $j\in[r]$ with length $|J_j|=\frac{1}{N^{j}}$ such that $x\in\bigcap_{j=1}^{r}J_j$, and $L_j:=\frac{x_j}{N^{j}}+L_{j-1}$ are the left endpoints of the intervals $J_j$ which satisfy $|L_j-x|\leq\frac{1}{N^{j}}$. Since $J_r\subset C_r$, we have $x\in F_k(I_{x_r})=\bigcup_{\alpha=0}^{k}I_{r,\alpha}$. Then there exists $x_{r+1}\in\{1,\dots,k\}$ such that $x\in J_{r+1}:=I_{r+1,x_{r+1}}$. The left endpoint of the interval $J_{r+1}$ is given by $L_{r+1}=L_r+\frac{x_{r+1}}{N^{r+1}}$, and satisfies $|L_{r+1}-x|\leq\frac{1}{N^{r+1}}$. In this way, we inductively construct a sequence $\{x_j\}_{j=1}^{\infty}$ of integers in $\{0,\dots,k\}$ such that for each $r\in\mathbb{N}$, $|\sum_{j=1}^{r}\frac{x_j}{N^{j}}-x|\leq\frac{1}{N^{r}}$. It follows that $x=\sum_{j=1}^{\infty}\frac{x_j}{N^{j}}$, which implies that $x\in S$. For the other direction let $x\in S$. Then $x=\lim_{r\rightarrow\infty}\sum_{j=1}^{r}\frac{x_j}{N^{j}}$. For each $r$, the partial sum $\sum_{j=1}^{r}\frac{x_j}{N^{j}}$ is an element of $C(N,k)$. This follows from the fact that each partial sum is a left endpoint of an interval $I_{r,\alpha}$ for sum $\alpha\in S_r$, and therefore belongs to $C(N,k)$, since $C(N,k)$ contains all left endpoints of the intervals $I_{j,\alpha}$ for each $j\in\mathbb{N}$ and $\alpha\in S_j$. Since $C(N,k)$ is a compact set, this implies that $x\in C(N,k)$. Therefore $C(N,k)=S$.
\end{proof}
Next we observe some nice properties.
\begin{lemma}\label{properties_fractal_set}
The set $C(N,k)$ has the following properties.
\begin{enumerate}
    \item For any $0\leq k\leq N-2$, we have $|C(N,k)|=0$.
    \item If $k=N-1$, then $C(N,N-1)=[0,1]$.
    \item If $k+l\leq N-1$, then $C(N,k)+C(N,l)=C(N,k+l)$.
    \item The diameter of $C(N,k)$ is $\textup{diam}\left(C(N,k)\right)=\frac{k}{N-1}$.
\end{enumerate}
\end{lemma}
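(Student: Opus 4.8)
The plan is to work entirely from the digit representation \eqref{nice_representation}, since each of the four claims reduces to an elementary statement about base-$N$ expansions with restricted digits. First I would record the structural fact that the $j$-th approximation $C_j$ is a union of $(k+1)^j$ closed intervals, each of length $N^{-j}$: at each stage the map $F_k$ replaces one interval by the first $k+1$ of its $N$ equal subintervals. This count is the engine behind parts (1) and (4).

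For part (1), since $C(N,k)\subseteq C_j$ for every $j$, monotonicity of Lebesgue measure gives $|C(N,k)|\leq |C_j| = \left(\frac{k+1}{N}\right)^j$. When $0\leq k\leq N-2$ we have $k+1\leq N-1<N$, so the right-hand side tends to $0$ and hence $|C(N,k)|=0$. For part (2), taking $k=N-1$ allows every digit $0,\dots,N-1$, and every $x\in[0,1]$ admits such a base-$N$ expansion; combined with the obvious inclusion $C(N,N-1)\subseteq[0,1]$ this yields $C(N,N-1)=[0,1]$. (Equivalently, $F_{N-1}(I)=I$, so $C_j=[0,1]$ at every stage.)

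For part (4), I would note that $0\in C(N,k)$ via the all-zero digit string, while every element $\sum_j x_j N^{-j}$ is bounded above by $\sum_j k N^{-j} = \frac{k}{N-1}$, with equality for the all-$k$ string. Hence $\min C(N,k)=0$ and $\max C(N,k)=\frac{k}{N-1}$, so $\textup{conv}(C(N,k))=\left[0,\frac{k}{N-1}\right]$ and the diameter is $\frac{k}{N-1}$.

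The substantive step is part (3), the arithmetic identity $C(N,k)+C(N,l)=C(N,k+l)$. For the inclusion $\subseteq$, I would add representations digitwise: if $x_j\in\{0,\dots,k\}$ and $y_j\in\{0,\dots,l\}$, then $x_j+y_j\in\{0,\dots,k+l\}$, and because $k+l\leq N-1$ these sums are already legitimate base-$N$ digits, so no carrying occurs and $\sum_j (x_j+y_j)N^{-j}\in C(N,k+l)$. For $\supseteq$, given $z=\sum_j z_j N^{-j}$ with $z_j\in\{0,\dots,k+l\}$, I would split each digit as $z_j=\min(z_j,k)+\bigl(z_j-\min(z_j,k)\bigr)$: the first summand lies in $\{0,\dots,k\}$, while the second is $0$ when $z_j\leq k$ and equals $z_j-k\leq l$ otherwise, hence always lies in $\{0,\dots,l\}$. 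This realizes $z$ as a sum of an element of $C(N,k)$ and an element of $C(N,l)$. The only point requiring care is that base-$N$ expansions are not unique, but this is harmless: for each inclusion I need only exhibit one admissible representation on the target side, so the non-uniqueness never obstructs the argument. This digit-splitting bookkeeping is the main (though still elementary) obstacle; the remaining parts are immediate from the interval count and the extremal digit strings.
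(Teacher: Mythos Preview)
Your argument is correct and follows the same digit-representation approach as the paper for parts (2), (3), and (4); in fact for part (3) you are more careful than the paper, which asserts the equality in one line without explicitly exhibiting the digit-splitting for the inclusion $\supseteq$.

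The one genuine difference is part (1): the paper computes the total Lebesgue measure removed over all stages, summing $(N-1-k)(k+1)^{j-1}N^{-j}$ over $j$ to obtain $1$, whereas you simply bound $|C(N,k)|\leq|C_j|=\bigl((k+1)/N\bigr)^j\to 0$. Your route is shorter and avoids the bookkeeping of how much is deleted at each stage; the paper's version has the mild advantage of making the self-similar structure of the construction explicit, but for the purpose of this lemma your monotonicity argument is the cleaner choice.
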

\begin{proof}
    For the first property, we use the definition of $C(N,k)$ and observe that in the $j$th iteration, we remove a total volume of $(N-1-k)N^{-j}$ from $(k+1)^{j-1}$ of the sub-intervals. Then the total volume removed is 
    \begin{equation*}
        |[0,1]\backslash C(N,k)|=(N-1-k)\sum_{j=1}^{\infty}\frac{(k+1)^{j-1}}{N^j}=\frac{N-k-1}{k+1}\cdot\frac{k+1}{N-k-1}=1.
    \end{equation*}
    Therefore $|C(N,k)|=0$. The second property follows from the fact that $C(N,N-1)$ is exactly the base $N$ representation of the interval $[0,1]$. To verify the third property, we observe that
    \begin{equation*}
        \begin{split}
          C(N,k)+C(N,l)&=\left\{\sum_{j=1}^{\infty}\frac{x_j}{N^j}:x_j\in\{0,1,\dots,k\}\right\}+\left\{\sum_{j=1}^{\infty}\frac{x_j}{N^j}:x_j\in\{0,1,\dots,l\}\right\}\\
          &=\left\{\sum_{j=1}^{\infty}\frac{x_j}{N^j}:x_j\in\{0,1,\dots,k+l\}\right\}=:C(N,k+l).
        \end{split}
    \end{equation*}
    To verify the fourth property we just need to compute the element of $C(N,k)$ where $x_j=k$ for each $j$. This is
    \begin{equation*}
        \sum_{j=1}^{\infty}\frac{k}{N^j}=\frac{k}{N-1}.
    \end{equation*}
    This completes the proof.
\end{proof}

\subsubsection{A note on the Brunn-Minkowski inequality}
For the example we construct for the Lyusternik region, we will need to know the equality conditions for the Brunn Minkowski inequality.
\begin{theorem}[The Brunn-Minkowski-Lyusternik inequality]
Let $A$ and $B$ be non-empty, compact sets in $\mathbb{R}^n$. Then 
\begin{equation*}
    |A+B|^{\frac{1}{n}}\geq|A|^{\frac{1}{n}}+|B|^{\frac{1}{n}},
\end{equation*}
where equality holds if and only if exactly one of the following properties hold:
\begin{enumerate}
    \item $|A+B|=0$.
    \item Exactly one of the sets $A$ and $B$ has positive measure; the other is a point.
    \item The sets $A$ and $B$ are homothetic convex bodies (in the case $n=1$, this means that $A$ and $B$ are both intervals of positive measure).
\end{enumerate}
\end{theorem}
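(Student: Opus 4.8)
The plan is to separate the inequality from its equality characterization, since the former is classical and short while the latter is the genuine difficulty. For the inequality, I would use the Hadwiger--Ohmann induction on the number of boxes. First I would establish the bound when $A=\prod_{i=1}^{n}[0,a_i]$ and $B=\prod_{i=1}^{n}[0,b_i]$ are axis-parallel boxes: here $A+B=\prod_{i=1}^{n}[0,a_i+b_i]$, so that the claim
\begin{equation*}
\left(\prod_{i=1}^{n}\frac{a_i}{a_i+b_i}\right)^{1/n}+\left(\prod_{i=1}^{n}\frac{b_i}{a_i+b_i}\right)^{1/n}\leq 1
\end{equation*}
reduces to the arithmetic--geometric mean inequality applied termwise. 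I would then upgrade to finite unions of boxes by inducting on the total number of boxes in $A$ and $B$: choose a coordinate hyperplane separating two boxes of $A$, translate $B$ so that the same hyperplane splits $B$ in the matching volume ratio, and apply the inductive hypothesis to the two half-configurations. Finally, approximating arbitrary compact $A,B$ from outside by finite unions of boxes and passing to the limit (using that $A+B$ is compact) yields the inequality in full generality. An equally clean route is to deduce the dimension-free form $|\lambda A+(1-\lambda)B|\geq |A|^{\lambda}|B|^{1-\lambda}$ from the Pr\'ekopa--Leindler inequality applied to $\mathbbm{1}_A,\mathbbm{1}_B,\mathbbm{1}_{\lambda A+(1-\lambda)B}$, then optimize over $\lambda$ by scaling to recover the additive $1/n$ form.

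For the equality characterization, the three sufficient conditions are immediate. If $|A+B|=0$ both sides vanish; if $B=\{p\}$ is a point then $A+B$ is a translate of $A$ and $|B|^{1/n}=0$, so equality is an identity; and if $A,B$ are homothetic convex bodies, say $B=\mu A+v$ with $\mu>0$, then convexity gives $A+\mu A=(1+\mu)A$, whence $A+B=(1+\mu)A+v$ and $|A+B|^{1/n}=(1+\mu)|A|^{1/n}=|A|^{1/n}+|B|^{1/n}$. The substantive direction is necessity. I would first dispose of the degenerate strata: if $|A|=|B|=0$ but $|A+B|>0$ the inequality is strict, and if exactly one set has positive measure while the other has measure zero but more than one point, a short argument shows $A+B$ is strictly larger than a single translate. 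This isolates the main case $|A|,|B|>0$, where one must show that equality forces $A$ and $B$ to be homothetic convex bodies.

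The hard part will be this necessity in the positive-measure case, and I expect it to be the main obstacle. In dimension $n=1$ it is tractable directly: translating so that $\min A=\min B=0$ and writing $a=\max A$, the sets $A$ and $a+B$ both lie in $A+B$, overlap only at the point $a$, and have total length $|A|+|B|$; equality therefore forces $A+B=A\cup(a+B)$ up to a null set, and a short analysis of which points may be omitted shows that neither $A$ nor $B$ can contain a gap, so each is an interval of positive length. For general $n$ I would reduce the dimension by slicing in a fixed direction, integrate the one-dimensional equality information across parallel slices, and combine the one-dimensional equality case with an induction on $n$ to force convexity of the slice profiles and then homothety. This is precisely the delicate point where the elementary approach becomes technical, and for a fully rigorous treatment in all dimensions I would invoke the classical equality analysis of Hadwiger and Ohmann rather than reprove it. Since the applications in this paper only use the characterization when $n=1$, it would suffice to present the one-dimensional argument in detail and cite the general case.
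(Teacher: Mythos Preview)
The paper does not prove this theorem. It states the result, records the corollary it actually needs (that if $|A_1+A_2|>0$ and $|A_2|=0$ then equality forces $A_2$ to be a single point), and then cites Gardner's survey for the inequality and Burago--Zalgaller for the equality conditions in the general compact case. So there is no in-paper proof to compare against; your proposal supplies strictly more than the paper does.

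Your outline is standard and correct: the Hadwiger--Ohmann box induction (or the Pr\'ekopa--Leindler route) for the inequality is fine, sufficiency of the three equality cases is immediate, and your plan to cite the classical sources for necessity in dimension $n\geq 2$ is exactly what the paper does. Two places in your one-dimensional necessity argument deserve a little more care. First, from $A+B=A\cup(a+B)$ up to a null set, the passage ``a short analysis of which points may be omitted shows that neither $A$ nor $B$ can contain a gap'' hides a real step; one clean way is to note that for every $a'\in A$ the translate $a'+B\subseteq A+B$ must lie in $A\cup(a+B)$ almost everywhere, and choosing $a'$ at the right endpoint of a putative gap of $A$ produces positive measure outside $A\cup(a+B)$. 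Second, the degenerate case ``$|A|>0$, $|B|=0$, $\mathrm{card}(B)\geq 2$'' needs the observation that for bounded $A$ of positive measure and $t\neq 0$ one has $|A\cap(A+t)|<|A|$, so two distinct translates $A+b_1,A+b_2\subseteq A+B$ already force $|A+B|>|A|$. With those two points made explicit your argument is complete in dimension one, which is all the paper uses.
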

It follows that if $A_1,A_2$ are compact in $\mathbb{R}^n$ such that $|A_1+A_2|>0$ and $|A_2|=0$, then $|A_1+A_2|^{\frac{1}{n}}=|A_1|^{\frac{1}{n}}+|A_2|^{\frac{1}{n}}$ if and only if $A_2$ is a point and $A_1$ is a compact set with positive measure. 

Aside from the reference given in the introduction, we refer to \cite{gardner1} for a detailed overview of the Brunn-Minkowski inequality, and \cite{burago1} where a proof of the equality conditions can be found for the case where the sets are compact, but not necessarily convex.

\subsubsection{The Lyusternik region is not closed}
We will show how the fractal set we just defined can be used to characterize a small piece of the Lyusternik region, and show that one of the defining volume inequalities must be a strict inequality, showing that $\Lambda_{n}(m)$ is not closed when $m\geq3$. For clarity, note that we will be associating a function $v_A\in\Lambda_n(3)$ with the vector
\begin{equation*}
    (0,|A_1|,|A_2|,|A_3|,|A_1+A_2|,|A_1+A_3|,|A_2+A_3|,|A_1+A_2+A_3|);
\end{equation*}
i.e. we choose the most natural ordering of the volumes of sumsets.
\begin{theorem}\label{lyusternik_region_clusure_property}
    Let $0<\alpha_{13}\leq\alpha_{23}<\alpha_{123}$, and let $n\geq1$ be an integer. Then
    \begin{equation*}
        (0,0,0,0,0,\alpha_{13},\alpha_{23},\alpha_{123})\in \Lambda_{n}(3).
    \end{equation*}
    Moreover, for any $\alpha>0$,
    \begin{equation*}
        (0,0,0,0,0,\alpha,\alpha,\alpha)\notin \Lambda_{n}(3),
    \end{equation*}
    which implies that $\Lambda_{n}(m)$ is not a closed set for any $m\geq3$.
\end{theorem}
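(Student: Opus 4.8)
The plan is to prove the two displayed claims separately: first the impossibility $(0,0,0,0,0,\alpha,\alpha,\alpha)\notin\Lambda_n(3)$, which is the conceptual heart and where the Brunn--Minkowski equality conditions do the work, and then the realizability statement, which supplies a sequence of points of $\Lambda_n(3)$ converging to the forbidden point. For the impossibility, suppose toward a contradiction that sets $A_1,A_2,A_3$ realize $(0,0,0,0,0,\alpha,\alpha,\alpha)$, so $|A_i|=0$ for each $i$, $|A_1+A_2|=0$, and $|A_1+A_3|=|A_2+A_3|=|A_1+A_2+A_3|=\alpha>0$. Writing $A_1+A_2+A_3=(A_1+A_3)+A_2$ and applying the Brunn--Minkowski--Lyusternik inequality gives
\begin{equation*}
\alpha^{1/n}=|A_1+A_2+A_3|^{1/n}\ge |A_1+A_3|^{1/n}+|A_2|^{1/n}=\alpha^{1/n},
\end{equation*}
so equality holds. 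Since $|A_1+A_2+A_3|>0$ while $|A_2|=0$, the equality corollary stated just after the Brunn--Minkowski theorem (with the roles of $A_1$ and $A_2$ played by $A_1+A_3$ and $A_2$) forces $A_2$ to be a single point; but then $A_2+A_3$ is a translate of $A_3$, giving $|A_2+A_3|=|A_3|=0\ne\alpha$, a contradiction. Hence the forbidden point lies outside $\Lambda_n(3)$.

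For realizability I would first reduce to dimension one. Given one-dimensional compact null sets $B_1,B_2,B_3$ that realize the prescribed values, the sets $A_1=B_1\times\{0\}^{n-1}$, $A_2=B_2\times\{0\}^{n-1}$, $A_3=B_3\times[0,1]^{n-1}$ are null in $\mathbb{R}^n$ (each has a null factor), $A_1+A_2$ is null, and every sumset involving $A_3$ thickens its fractal factor by the full cube $[0,1]^{n-1}$ of measure one, so that $|A_1+A_3|=|B_1+B_3|$, $|A_2+A_3|=|B_2+B_3|$, and $|A_1+A_2+A_3|=|B_1+B_2+B_3|$; thus the $\mathbb{R}^n$ problem is identical to the $\mathbb{R}^1$ problem. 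In dimension one I would build $B_1,B_2,B_3$ from the fractal sets $C(N,k)$. Keeping the digit budget of $B_1$ and $B_2$ in the ``non-overflow'' regime of Lemma~\ref{properties_fractal_set}(3) keeps $B_1+B_2$ null, while an extension of that lemma to the ``overflow'' regime --- namely that once the budgets exceed $N-1$ the sumset fills an interval whose length equals the sum of the diameters, which I would verify through the gap estimate of Lemma~\ref{largest_gap_large_sum_exact} --- produces sumsets of positive measure. Concretely I would arrange $B_1+B_3$ to be an honest interval $[0,\alpha_{13}]$ (taking $B_1,B_3$ to be complementary scaled copies $\alpha_{13}\,C(N,k)$ and $\alpha_{13}\,C(N,N-1-k)$), then choose $B_2$ as a scaled, suitably spread null set so that $B_2+B_3$ has measure $\alpha_{23}$ while $[0,\alpha_{13}]+B_2=[0,\alpha_{123}]$; a continuity/intermediate-value argument in the free scaling parameters would pin the three measures to their exact targets.

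The hard part is exactly this last step: simultaneously forcing $B_1+B_2$ to stay null, the two pairwise sums with $B_3$ to take prescribed and possibly quite different values $\alpha_{13}\le\alpha_{23}$, and the triple sum to reach an independently prescribed $\alpha_{123}$ that may be far larger than $\alpha_{23}$ --- the latter forcing $B_2$ to be spread out enough that adjoining the interval $[0,\alpha_{13}]$ fills a long interval even though $B_2$ remains measure zero and $B_1+B_2$ stays null. The ordering $\alpha_{13}\le\alpha_{23}<\alpha_{123}$ is precisely what should guarantee that the required diameters and gap sizes are compatible. Granting realizability, the theorem closes quickly: the points $(0,0,0,0,0,\alpha,\alpha,\alpha+\tfrac1j)$ lie in $\Lambda_n(3)$ for every $j$ (the hypotheses $0<\alpha\le\alpha<\alpha+\tfrac1j$ hold) and converge to $(0,0,0,0,0,\alpha,\alpha,\alpha)\notin\Lambda_n(3)$, so $\Lambda_n(3)$ is not closed. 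For $m\ge3$, adjoining point sets $A_4=\dots=A_m=\{0\}$ leaves every volume unchanged, $v_A(S)=|A_{S\cap\{1,2,3\}}|$, so the same sequence, viewed in $\mathbb{R}^{2^m}$, converges to a vector whose restriction to $2^{\{1,2,3\}}$ is the forbidden triple; any realization of that limit in $\Lambda_n(m)$ would restrict to a realization of the forbidden point in $\Lambda_n(3)$, which is impossible, so $\Lambda_n(m)$ fails to be closed for every $m\ge3$.
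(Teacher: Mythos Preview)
Your impossibility argument and the deduction of non-closedness are correct and match the paper's almost verbatim (the paper applies Brunn--Minkowski to $A_1+(A_2+A_3)$ and concludes $A_1$ is a point; you do the symmetric version with $A_2$). Your reduction from $\mathbb{R}^n$ to $\mathbb{R}$ is in fact cleaner than the paper's: the paper takes $n$-fold Cartesian powers $B_i^{(n)}\times\cdots\times B_i^{(n)}$ and must first hit the targets $a^{1/n},b^{1/n}$ in one dimension, whereas your device of thickening only $A_3$ by $[0,1]^{n-1}$ transports the one-dimensional volumes directly and avoids the $n$th-root bookkeeping.

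Where your proposal falls short is the one-dimensional realizability itself, which is the actual substance of the theorem. You correctly identify the fractal sets $C(N,k)$ as the building blocks and the constraints that $B_2$ must satisfy, but you defer the construction to ``a continuity/intermediate-value argument in the free scaling parameters'' without saying what those parameters are or why the relevant volume functions depend continuously on them. This is a genuine gap: the measures of sumsets of Cantor-type sets do not obviously vary continuously with scale, and you also invoke an ``overflow'' extension of Lemma~\ref{properties_fractal_set}(3) that is neither stated precisely nor proved. The paper sidesteps continuity entirely by writing down explicit sets: it first normalizes to $|B_1+B_3|=1$, $|B_2+B_3|=a\ge 1$, $|B_1+B_2+B_3|=b>a$, then takes $B_3=C(N,k_2)$, $B_2$ a finite union of integer translates of $C(N,k_1)$ arranged so that $B_2+B_3=[0,a]$ exactly, and $B_1=C(N,k_1)$ together with a finite set of points spaced $a$ apart so that $B_1+[0,a]=[0,b]$; the parameters $k_1,k_2,N$ are chosen with $k_1+k_2=N-1$ and $2k_1<N-1$ to keep $B_1+B_2$ null. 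Your outline is compatible with this, but as written it is a plan rather than a proof.
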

\begin{proof}
    We begin by proving this for $n=1$. Let $1\leq a<b$. We will prove that there exist compact sets $B_1,B_2,B_3$ such that $|B_j|=0$ for each $j\in[3]$, $|B_1+B_2|=0$, $|B_1+B_3|=1$, $|B_2+B_3|=a$, and $|B_1+B_2+B_3|=b$. Choose integers $k_1,k_2$ and $N$ such that $k_1+k_2=N-1$, $2k_1<N-1$, and $\frac{k_1}{N-1}+a<b$. Since $1\leq a$ there exists $q_a\in\mathbb{N}$ and $r_a\in[0,1)$ such that $a=q_a+r_a$. Since $b>a$ there exists $q_a\in\mathbb{N}$ and $r_b\in[0,1)$ such that $b=q_ba+r_ba$. Define 
    \begin{equation*}
        \begin{split}
            B_1&=C(N,k_1)\cup\bigcup_{j=0}^{q_b-1}\{ja\}\cup\{q_ba+r_ba-a\},\\
            B_2&=\bigcup_{j=0}^{q_a-1}\left(\{j\}+C(N,k_1)\right)\cup(\{q_a+r_a-1\}+C(N,k_1)),\\
            B_3&=C(N,k_2).
        \end{split}
    \end{equation*}
    We immediately see that $|B_j|=0$ for each $j\in[3]$, Since $2k_1<N-1$, we have $|B_1+B_2|=0$. Also, since $k_1+k_2=N-1$ and $C(N,N-1)=[0,1]$ we have
    \begin{equation*}
        \begin{split}
            |B_1+B_3|&=|C(N,k_1+k_2)|=1,\\
            |B_2+B_3|&=\left|\bigcup_{j=0}^{q_a-1}[j,j+1]\cup[q_a+r_a-1,q_a+r_a]\right|=|[0,a]|=a.
        \end{split}
    \end{equation*}
    For the sum of all three sets first note that by the above computation, $B_2+B_3=[0,a]$. Then
    \begin{equation*}
        \begin{split}
            |B_1+B_2+B_3|&=|B_1+[0,a]|\\
            &=\left|[0,a+\textup{diam}(C(N,k_1))]\cup\bigcup_{j=0}^{q_b-1}[ja,ja+a]\cup[q_ba+r_ba-a,b]\right|\\
            &=\left|\left[0,a+\frac{k_1}{N-1}\right]\cup[0,b]\right|=|[0,b]|=b.
        \end{split}
    \end{equation*}
    Now, let $0<\alpha_{13}\leq\alpha_{23}<\alpha_{123}$. By what was just shown, there exist compact sets $B_1,B_2$ and $B_3$ such that $|B_j|=0$ for each $j\in[3]$, $|B_1+B_2|=0$, and $|B_1+B_3|=1$, $|B_2+B_3|=\alpha_{13}^{-1}\alpha_{23}$, $|B_1+B_2+B_3|=\alpha_{13}^{-1}\alpha_{123}$. Now define $A_j:=\alpha_{13}B_j$. With this choice, we have for any $S\subseteq[3]$ that
    \begin{equation*}
        \left|\sum_{i\in S}A_i\right|=\alpha_{13}\left|\sum_{i\in S}B_i\right|.
    \end{equation*}
    It follows that 
    \begin{equation*}
       (0,0,0,0,0,\alpha_{13},\alpha_{23},\alpha_{123})\in \Lambda_{1}(3). 
    \end{equation*}
     Now, suppose that $n\geq2$. Assuming that $1\leq a<b$, it follows that $1\leq a^{\frac{1}{n}}<b^{\frac{1}{n}}$. Using the one-dimensional case described above, there exist compact sets $B_i^{(n)}$ in $\mathbb{R}$ such that $|B_i^{(n)}|=0$, $|B_1^{(n)}+B_2^{(n)}|=0$, $|B_{1}^{(n)}+B_3^{(n)}|=1$, $|B_2^{(n)}+B_3^{(n)}|=a^{\frac{1}{n}}$, and $|B_1^{(n)}+B_2^{(n)}+B_3^{(n)}|=b^{\frac{1}{n}}$. Now, define the sets $B_i$ in $\mathbb{R}^n$ by
     \begin{equation*}
         B_i:=\prod_{j=1}^{n}B_{i}^{(n)}=B_i^{(n)}\times\dots\times B_i^{(n)}.
     \end{equation*}
     Then $|B_i|=|B_i^{(n)}|^n$. It follows that $|B_1+B_2|=0$, $|B_1+B_3|=1$, $|B_2+B_3|=a$, and $|B_1+B_2+B_3|=b$. The rest of the proof goes the same as the case of dimension $n=1$ except we instead define $A_i:=\alpha_{13}^{\frac{1}{n}}B_i$. Then we see that 
     \begin{equation*}
         (0,0,0,0,0,\alpha_{13},\alpha_{23},\alpha_{123})\in\Lambda_n(3).
     \end{equation*}
     Finally, suppose there exists $\alpha>0$ for which $(0,0,0,0,0,\alpha,\alpha,\alpha)\in\Lambda_n(3)$. Then there exist compact sets $A_1,A_2,A_3$ in $\mathbb{R}^n$ such that $|A_j|=0$ for each $j\in[3]$, and $|A_1+A_3|=|A_2+A_3|=\alpha$ and $|A_1+A_2+A_3|=\alpha$. In particular, we have $|A_1+(A_2+A_3)|^{\frac{1}{n}}=|A_1|^{\frac{1}{n}}+|A_2+A_3|^{\frac{1}{n}}$. By the equality conditions for the Brunn-Minkowski-Lyusternik inequality, $\textup{card}(A_1)=1$. Then $|A_1+A_3|=|A_3|=0$, which is a contradiction. Therefore, $(0,0,0,0,0,\alpha,\alpha,\alpha)\notin\Lambda_n(3)$ for any $\alpha>0$. The fact that $\Lambda_{n}(3)$ is not closed now follows from observing that for each $k\geq2$ we have
     \begin{equation*}
         (0,0,0,0,0,1-\frac{1}{k},1-\frac{1}{k},1)\in\Lambda_{n}(3),
     \end{equation*}
     but the limit of this sequence is
     \begin{equation*}
         (0,0,0,0,0,1,1,1)\notin\Lambda_{n}(3).
     \end{equation*}
     In the case that $m>3$, just use the method described above to choose the sets $A_1,A_2$ and $A_3$, and set $A_j=\{0\}$ for each $j=4,\dots,m$.
\end{proof}

\newpage
\bibliography{bibliography}
\bibliographystyle{plain}

\end{document}